\newtheorem{theorem}{Theorem}[section]
\newtheorem{lm}[theorem]{Lemma}
\newtheorem{cor}[theorem]{Corollary}
\newtheorem{pro}[theorem]{Proposition}
\newtheorem{defi}[theorem]{Definition}
\newtheorem{notas}[theorem]{Notations}
\newtheorem{rem}[theorem]{Remark}
\newtheorem{fact}[theorem]{Fact}
\newtheorem{facts}[theorem]{Facts}
\newtheorem{nist}[theorem]{}
\newtheorem{constr}[theorem]{Construction}
\newcommand{\df}{\ensuremath{\overset{\mathrm{df}}{=}}}
\def\p{\varphi}
\def\a{\alpha}
\def\b{\beta}
\def\ep{\varepsilon}
\def\g{\gamma}
\def\s{\sigma}
\def\ups{\upsilon}
\def\up{\upsilon}
\def\UP{\Upsilon}
\def\epb{\bar{\varepsilon}_A^{Z_A}}
\def\lra{\longrightarrow}
\def\sbe{\subseteq}
\def\stm{\setminus}
\def\ems{\emptyset}
\def\nes{\neq\emptyset}
\def\cuk{\,\check{}\,}
\def\ovl{\overline}
\def\ex{\exists}
\def\fa{\forall}
\def\we{\wedge}
\def\ap{^{\,\prime}}
\def\inv{^{-1}}
\def\st{\ |\ }
\def\nin{\not\in}
\def\cbf{{\bf c}}
\def\1{{\bf 1}}
\def\2{\mbox{{\sf 2}}}
\def\3{\mbox{{\bf 3}}}
\def\AA{{\cal A}}
\def\BB{{\cal B}}
\def\CCC{{\cal C}}
\def\PP{{\cal P}}
\def\XX{{\cal X}}
\def\YY{{\cal Y}}
\def\CCC{{\sf C}}
\def\UUU{{\sf U}}
\def\ult{{\mathfrak u}}
\def\ultv{{\mathfrak v}}
\def\ultw{{\mathfrak w}}
\def\clu{{\mathfrak c}}
\def\CO{{\rm CO}}
\def\KO{{\rm KO}}
\def\RC{{\rm RC}}
\def\CR{{\rm CR}}
\def\BClust{{\rm BClust}}
\def\Clust{{\rm Clust}}
\def\BUlt{{\rm BUlt}}
\def\co{{\sf CO}}
\def\Id{{\sf Id}}
\def\rc{{\sf RC}}
\def\rcl{{\sf RCL}}
\def\bclust{{\sf BClust}}
\def\smf{{\,\smallfrown\,}}
\def\nsmf{{\,\not\smallfrown\,}}
\def\HLC{{\bf LKHaus}}
\def\DHLC{{\bf CLCA}}
\def\CLCA{{\bf CLCA}}
\def\DBoo{{\bf DBoo}}
\def\int{\mbox{{\rm int}}}
\def\cl{\mbox{{\rm cl}}}
\def\Clust{\mbox{{\rm Clust}}}
\def\BClu{\mbox{{\rm BClust}}}
\def\Ult{\mbox{{\rm Ult}}}
\def\Ultsf{\mbox{{\sf Ult}}}
\def\COsf{\mbox{{\sf CO}}}
\def\doc{\hspace{-1cm}{\em Proof.}~~}
\def\sq{\hspace*{\fill} \hbox{\vrule\vbox{\hrule\phantom{o}\hrule}\vrule}}
\def\sqs{\sq \vspace{2mm}}
\def\Top{{\bf Top}}
\def\BBBB{\mathbb{B}}
\def\Set{{\bf Set}}
\def\tcx{t_X^C}
\def\tcy{t_Y^C}
\def\tcx0{t_{(X,X_0)}}
\def\tcy0{t_{(Y,Y_0)}}
\def\di{\diamond}
\def\bU0{\bar{U}=(U^0,(U^i,U^{ci})_{i\in\omega})}
\def\bV0{\bar{V}=(V^0,(V^i,V^{ci})_{i\in\omega})}
\def\CAPX{{\sf C}(\AA,\PP,\XX)}
\def\ZCB{{\bf zCBoo}}
\def\LZCB{{\bf lzCBoo}}
\def\EDT{{\bf EdTych}}
\def\EDL{{\bf EdLKH}}
\title{{\LARGE\bf Categorical Extension of Dualities: }\\
\vspace{0.2cm}
{\LARGE\bf From Stone to de Vries and Beyond, II}\\
\vspace{0.5cm}
{\large\bf G. Dimov, E. Ivanova-Dimova
and W. Tholen}\thanks{The  first  two authors acknowledge the   support
by Bulgarian National Science Fund, contract no. DN02/15/19.12.2016.
The third author acknowledges the support under Discovery Grant no. 501260 of the Natural Sciences and Engineering Council of Canada.}
\\
\vspace{0.2cm}
{\footnotesize\rm Faculty of Mathematics and Inf.,  Sofia University,}
{\footnotesize\rm 5 J. Bourchier Blvd., 1164 Sofia, Bulgaria}\\
{\footnotesize\rm Dept. of Mathematics and Statistics, York University,}
 {\footnotesize\rm Toronto, Ontario, M3J 1P3, Canada}
\\
\vspace{0.5cm}
{\normalsize\em Dedicated to the memory of Professor Mitrofan Choban}}
\author{}
\date{}
\begin{document}

\maketitle

\begin{abstract}
Under a general categorical procedure for the extension of dual equivalences as presented in this paper's predecessor, a new algebraically defined category is established that is dually equivalent to the category $\bf LKHaus$ of locally compact Hausdorff spaces and continuous maps, with the dual equivalence extending a Stone-type duality for the category of extremally disconnected locally compact Hausdorff spaces and continuous maps. The new category is then shown to be isomorphic to the category $\bf CLCA$ of complete local contact algebras and suitable morphisms. Thereby, a new proof is presented  for the equivalence ${\bf LKHaus}\simeq{\bf CLCA}^{\rm op}$  that was obtained by the first author more than a decade ago. Unlike the morphisms of $\bf CLCA$, the morphisms of the new category and their composition law are very natural and easy to handle.
\end{abstract}

\footnotetext[1]{{\footnotesize
{\em Keywords:}  (locally) compact Hausdorff space,  Stone space, regular closed/open set, irreducible map, perfect map, projective cover, (complete) Boolean algebra, (normal) contact algebra, local contact algebra, ultrafilter, cluster, clan,  covering class,  Stone duality, de Vries duality.}}

\footnotetext[2]{{\footnotesize
{\em 2010 Mathematics Subject Classification:}    54D45, 18A40, 18B30, 54E05, 54C10, 54G05, 06E15, 03G05.}}

\footnotetext[3]{{\footnotesize {\em E-mail addresses:}
gdimov@fmi.uni-sofia.bg, elza@fmi.uni-sofia.bg, tholen@yorku.ca}.}

\section{Introduction}
\label{introl}

The de Vries duality \cite{deV} extends the restricted Stone duality between extremally disconnected compact Hausdorff spaces and complete Boolean algebras to all compact Hausdorff spaces and so-called de Vries algebras; these are complete Boolean algebras equipped with a structure, here taken in its equivalent form of a normal contact relation as defined in \cite{F,DV1}. They are the objects of the category $\mathbf{deV}$ whose morphisms are somewhat unusual and cumbersome to handle, since they may not respect the Boolean structure, and their categorical composition generally does not proceed by map composition.

In this paper's predecessor \cite{DDT1} we presented the de Vries dual equivalence
\begin{center}
$\xymatrix{{\bf deV}^{\rm op}\ar@/^0.6pc/[rr] & {\scriptstyle \simeq} & {\bf KHaus}\ar@/^0.6pc/[ll].
}$
\end{center}
as the composite of an isomorphism and two equivalences, as in
\begin{center}
$\xymatrix{{\bf deV}^{\rm op}\ar@/^0.2pc/[r] & ({\bf deVBoo}/\!\backsim)^{\rm op}\ar@/^0.2pc/[r]\ar@/^0.2pc/[l] & ({\sf C}({\cal A},{\cal P},{\cal X})/\!\sim)^{\rm op}\ar@/^0.2pc/[r]\ar@/^0.2pc/[l] & {\bf KHaus\;,}\ar@/^0.2pc/[l]\\
}$	
\end{center}
where
\begin{itemize}
	\item the (comma) category ${\sf C}({\cal A},{\cal P},{\cal X})$ was obtained categorically from the above-mentioned restricted Stone duality between the categories $\cal X=\mathbf{EKHaus}$
 and $\cal A=\mathbf{CBoo}$,
 augmented by the class $\cal P$ of irreducible (\cite{Wh,Gle,ArP}) continuous maps of compact Hausdorff spaces with extremally disconnected domain;
	\item the equivalence relation $\sim$ on ${\mathsf C}(\mathcal A,\mathcal P,\mathcal X)$ arose very naturally, in such a way that the quotient category became (dually) equivalent to $\mathbf{KHaus}$, as a formal categorical extension of the equivalence ${\mathcal A}^{\rm op}\simeq \mathcal X$;
	\item the category ${\sf C}({\cal A},{\cal P},{\cal X})$ could be shown to be equivalent to a new category
named $\bf deVBoo$, which has the same objects as the category $\bf deV$, but whose morphisms are Boolean morphisms reflecting the contact relation, and being categorically composed by ordinary map composition;
	\item the equivalence relation $\backsim$ was just the ``translation'' of $\sim$ along this last equivalence.	
	\end{itemize}
In this way we not only obtained an alternative proof of the original de Vries duality theorem but, largely based on our general categorical construction, also established a new dual equivalence of $\bf KHaus$ with the  category ${\bf deVBoo}/\!\backsim$ whose morphisms and their compositions struck us as described more naturally than those of the original category $\bf deV$.

In this paper we aim to establish analogous results for the category $\bf LKHaus$ of locally compact Hausdorff spaces and continuous maps which, in \cite{D-AMH1-10}, the first author showed to be dually equivalent to the category $\bf CLCA$ of complete local contact algebras (\cite{Roeper,VDDB,DV1}) and suitable morphisms; the objects of the category  $\bf CLCA$  are generalized de Vries algebras equipped with an ideal of their underlying Boolean algebra which satisfies some natural conditions, and their morphisms are generalized de Vries morphisms satisfying some compatibility conditions with the ideal structure.
Our main goal is the construction of a new category, dually equivalent to the category $\bf LKHaus$,  with the same objects as $\bf CLCA$, but with more naturally described morphisms, composed in a standard manner.\footnote{We note that in \cite{DI} another category dually equivalent to $\bf LKHaus$
 is presented. While its composition law may be considered to be more natural than that of the category $\bf CLCA$, its morphisms, which are special multi-valued maps, may not.}

  To this end we follow a path similar to the procedure used in our alternative proof of the de Vries duality, by first formally extending a  Stone-type dual equivalence which follows immediately from the results in \cite{DD,DD1}. Specifically, taking now for $\cal X$ the category $\bf EdLKH$ of extremally disconnected locally compact Hausdorff spaces, we  describe its Stone-type dual  as the category $\cal A= \bf lzCBoo$; its object are  complete lz-algebras, these being complete Boolean algebras equipped with an open dense subset of its Stone dual (see \cite{DD,DD1}).
Choosing for $\cal P$ the  class of   perfect irreducible maps of locally compact Hausdoff spaces with extremally disconnected locally compact Hausdorff domain, we are now at the beginning of a passage that culminates again in the establishment of a string of an isomorphism and two equivalences, indicated by
\begin{center}
$\xymatrix{{\bf CLCA}^{\rm op}\ar@/^0.2pc/[r] & ({\bf DBoo}/\!\backsim)^{\rm op}\ar@/^0.2pc/[r]\ar@/^0.2pc/[l] & ({\sf C}({\cal A},{\cal P},{\cal X})/\!\sim)^{\rm op}\ar@/^0.2pc/[r]\ar@/^0.2pc/[l] & {\bf LKHaus\;,}\ar@/^0.2pc/[l]\\
}$
\end{center}
whose composite is the dual equivalence of \cite{D-AMH1-10}.  Here, $\bf DBoo$ is a category  with the same objects as  the category $\bf CLCA$ and with morphisms that are Boolean homomorphisms  reflecting the contact relation and respecting the ideal structure,  composed by ordinary map composition. Its quotient category gives a new dual equivalence with $\bf LKHaus$ (see Theorem \ref{md}), and by showing that it is isomorphic to $\bf CLCA$, we finally obtain an alternative proof of the first author's duality result \cite{D-AMH1-10} (see Theorem \ref{newlcdth} and Corollary \ref{cornlcdth}).

This Introduction is followed by a section in which we collect all notational ingredients and known preliminary facts needed for the establishment of the above string of equivalences, referring to the literature for proofs of these facts. Section 3 is devoted to proving the new dual equivalence of $\bf LKHaus$ with the category ${\bf DBoo}/\!\backsim$ (see Theorem \ref{md}). Here, among other assertions, we show (see Proposition \ref{eqrel}) that if
 $A$ is a local contact algebra with contact relation $\smf$ and ideal $\BBBB$ of ``bounded'' elements,
then the extension of
 the relation $\smf$ to ultrafilters in $A$
 is an equivalence relation on the set\/ $\BUlt(A)$ of all bounded ultrafilters in $A$ (i.e., of those ultrafilters $\ult$ for which $\ult\cap\BBBB\nes$). This fact generalizes  a result for normal contact algebras $A$ established in \cite{DV1,DUV}, asserting that the extension of the contact relation to ultrafilters
 is  an equivalence relation on $\Ult(A)$.
As another important step towards establishing our duality results, regarding
the set $Z=\BUlt(A)$ as a subspace of the Stone dual $\Ultsf(A)$ of $A$, in Proposition \ref{eqrel} we also prove that
$Z$ is an open dense subset of the space $\Ultsf(A)$. In particular, $Z$ is a locally compact Hausdorff space, and if $A$ is a complete local contact algebra then $Z$ is extremally disconnected,
and the natural quotient map $p_{A}:Z\to Z/\smf$   is perfect irreducible, with a locally compact Hausdorff quotient space.

Section 4 completes the establishment of the above string of equivalences and the new proof of the duality of \cite{D-AMH1-10} (see Theorem \ref{newlcdth} and Corollary \ref{cornlcdth}). In particular, in the above notation, we show that the space $Z/\smf$ is homeomorphic to the space that appeared as the dual of the complete local contact algebra $A$ in the duality  of \cite{D-AMH1-10}, that is: the space of bounded clusters in
$A$ (see Proposition \ref{full propl}). An important ingredient to the proof of Theorem \ref{newlcdth} is the following crucial result: for every $\CLCA$-morphism $\a:(A,\smf,\BBBB)\lra (A\ap,\smf',\BBBB\ap)$ there exists a $\bf DBoo$-morphism $\p: (A,\smf,\BBBB)\lra (A\ap,\smf',\BBBB\ap)$ (i.e., a Boolean homomorphism $\p:A\to B$
which reflects the contact relation, and  for every $b\ap\in\BBBB\ap$ there exists $b\in\BBBB$ with $b\ap\le\p(b)$) such that
$$\a(a)=\bigvee\{\p(b)\st b\in\BBBB, b\nsmf a^*\}$$
 for every $a\in A$; here $a^*$ denotes the complement of $a$. In Proposition \ref{V} we also show that, conversely, if $\p$ is a $\DBoo$-morphism, then the function $\a$, defined by the above formula, is a $\CLCA$-morphism.

We note that, with the help of a simplified variant (obtained in \cite{DDT}) of our general categorical procedure for the extension of dual equivalences from \cite{DDT1}, we gave in \cite{DDT} a new proof of the Fedorchuk Duality Theorem \cite{F}.
Other applications of the general categorical procedure for the extension of dual equivalences will be presented in \cite{DDT3}.

If ${\mathcal C}$ is a category, then we denote by $|{\mathcal C}|$ the class of the objects of ${\mathcal C}$ and by ${\mathcal C}(X,Y)$ the set of all
  ${\mathcal C}$-morphisms between two ${\mathcal C}$-objects $X$ and $Y$.

Our general references for unexplained notation and notions are \cite{AHS} for category theory,
 \cite{E} for topology, and \cite{kop89} for Boolean algebras.

\section{Preliminaries}\label{s2nn}
For the reader's convenience, we recall from \cite{D-AMH1-10} some notational conventions and background facts and extend them as needed for this paper.

\begin{facts}\label{Stonel}
\rm
{\em (The Stone duality)}.
The functor ${\sf CO}:{\bf Top}\longrightarrow {\bf Boo}^{{\rm op}}$
 assigns to every topological space $X$
 its Boolean algebra $(\mbox{{\rm CO}}(X),\subseteq )$ of closed and open subsets and operates on morphisms by taking inverse images of sets.
 Its restriction to Stone spaces (= zero-dimensional compact Hausdorff spaces) is part of
 the Stone duality \cite{ST},
 $${\sf Ult} :{\bf Boo}^{{\rm op}}\longrightarrow {\bf Stone} \ \ \mbox{ and } \ \ {\sf CO} :{\bf Stone}\longrightarrow  {\bf Boo}^{{\rm op}}.$$
The other part, $\sf Ult$,
assigns to a Boolean algebra $A$ the
space ${\sf Ult} (A)$
of ultrafilters in $A$ and operates on morphisms by taking inverse images of ultrafilters. The
topology of ${\sf Ult}(A)$
takes
the family $\{\ep_A(a)\ |\  a\in A\}$ as a base for closed sets, where
$$\ep_A(a)\ensuremath{\overset{\mathrm{df}}{=}}
\{\ult\in \Ult(A)\ |\  a\in\ult\},$$
for every $a\in A$ . (Note that the  family $\{\ep_A(a)\ |\  a\in A\}$ is also an open base for
the topology of $\Ultsf(A)$.)
The {\em Stone map}
$\ep_A:A\longrightarrow  {\sf CO} ({\sf Ult} (A))$
is a ${\bf Boo}$-isomorphism and, in our setting, serves as
 the counit of the Stone dual equivalence at
 $A$.
The adjunction unit
at a Stone space $X$ is the
homeomorphism
$$\eta_X: X\longrightarrow{\sf Ult}({\sf CO}(X)),\quad x\longmapsto \ult_x\df\{U\in{\sf CO}(X)\,|\,x\in U\}.$$

It is well-known
(\cite{Halmos,kop89})
that, under the Stone duality, {\em complete} Boolean algebras correspond  to compact Hausdorff spaces that are {\em extremally disconnected} (so that the closure of any open set is still open). Hence, in a self-explanatory notation, one has the dual equivalence
\begin{center}
$\xymatrix{{\bf CBoo}^{\rm op}\ar@/^0.6pc/[rr]^{\sf Ult\;\;\;} & {\scriptstyle \simeq} & {\bf EdKH}\ar@/^0.6pc/[ll]^{\sf CO\;\;\;}.
}$
\end{center}
\end{facts}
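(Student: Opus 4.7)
The plan is to establish the Stone dual equivalence first and then the restricted correspondence ${\bf CBoo}^{\rm op}\simeq{\bf EdKH}$, following the classical route. Functoriality of ${\sf CO}$ is routine: for continuous $f:X\to Y$, the inverse image $f^{-1}$ preserves finite unions, intersections, and complements and sends clopen sets to clopen sets, producing a Boolean homomorphism ${\sf CO}(Y)\to{\sf CO}(X)$. For ${\sf Ult}$, I would verify that, for any Boolean homomorphism $\varphi:A\to B$ and any ultrafilter $\mathfrak{u}\subseteq B$, the preimage $\varphi^{-1}(\mathfrak{u})$ is an ultrafilter in $A$, and then check continuity via the identity $({\sf Ult}(\varphi))^{-1}(\varepsilon_A(a))=\varepsilon_B(\varphi(a))$, using that each $\varepsilon_A(a)$ is clopen, with complement $\varepsilon_A(a^*)$.

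Next I would check that ${\sf Ult}(A)$ is a Stone space. The family $\{\varepsilon_A(a)\mid a\in A\}$ is a clopen base closed under finite intersections, since $\varepsilon_A(a)\cap\varepsilon_A(b)=\varepsilon_A(a\wedge b)$; Hausdorffness is immediate from the maximality of ultrafilters. Compactness is the one non-routine step and reduces, via the contrapositive, to the Boolean prime ideal theorem: any family in $A$ with the finite meet property extends to an ultrafilter. For the counit $\varepsilon_A$, the homomorphism properties are routine, injectivity again uses the prime ideal theorem (to produce a separating ultrafilter), and surjectivity onto ${\sf CO}({\sf Ult}(A))$ follows from compactness, since every clopen set is a finite union $\varepsilon_A(a_1)\cup\cdots\cup\varepsilon_A(a_n)=\varepsilon_A(a_1\vee\cdots\vee a_n)$. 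For the unit $\eta_X$ at a Stone space $X$, bijectivity follows because an ultrafilter of clopen sets in a compact space converges to a unique point, and a continuous bijection between compact Hausdorff spaces is automatically a homeomorphism.

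Finally, for ${\bf CBoo}^{\rm op}\simeq{\bf EdKH}$, the essential point is that a Boolean algebra $A$ is complete if and only if ${\sf Ult}(A)$ is extremally disconnected. For the forward direction, given an open $U\subseteq{\sf Ult}(A)$, I would write $U=\bigcup_i\varepsilon_A(a_i)$ via the clopen base, set $a=\bigvee_ia_i$ in $A$, and show $\overline U=\varepsilon_A(a)$, which is clopen. The inclusion $\overline U\subseteq\varepsilon_A(a)$ follows because $a\notin\mathfrak{u}$ forces $\varepsilon_A(a^*)$ to be a clopen neighborhood of $\mathfrak{u}$ disjoint from $U$, since $a_i\wedge a^*=0$ for all $i$; the reverse inclusion uses the identity $b\wedge a=\bigvee_i(b\wedge a_i)$, which holds in any complete Boolean algebra because $b\wedge(-)$ admits $b^*\vee(-)$ as a right adjoint and so preserves arbitrary joins. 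Conversely, if ${\sf Ult}(A)$ is extremally disconnected, then for any family $\{a_i\}\subseteq A$ the clopen set $\overline{\bigcup_i\varepsilon_A(a_i)}$ corresponds under $\varepsilon_A^{-1}$ to the supremum $\bigvee_ia_i$ in $A$. The main obstacle here is merely the bookkeeping required for the back-and-forth between clopen operations on ${\sf Ult}(A)$ and Boolean operations on $A$; no further non-trivial ingredient beyond the prime ideal theorem and adjointness of $b\wedge(-)$ is needed.
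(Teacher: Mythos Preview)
Your argument is correct and follows the classical route to Stone duality and its restriction to complete Boolean algebras. However, the paper does not actually prove this statement: it is presented as background ``Facts'' with no proof, only references to Stone's original paper and to the textbooks of Halmos and Koppelberg. So there is nothing in the paper to compare your argument against; you have supplied a (standard) proof where the authors deliberately omit one. If anything, your write-up is exactly the kind of argument one would find in the cited references, so in that sense it is consonant with what the paper implicitly invokes.
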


\begin{facts}\label{deVriesAlgebraicl}
\rm
A pair $(A,\smf)$, where $A$ is a Boolean algebra and $\smf$ is a relation on it, is called a {\em contact algebra}\/ (\cite{DV1}) if it satisfies the following conditions:
\begin{enumerate}
\renewcommand{\theenumi}{\ensuremath{{\rm C}\arabic{enumi}}}
 \item $a\smallfrown a$ whenever $a>0$;\label{c1}
    \item $a\smallfrown b$ implies $a>0$ and $b>0$;\label{c2}
     \item $a\smallfrown b$ implies $b\smallfrown a$;\label{c3}
     \item $a\smallfrown(b\vee c)$  if, and only if, \!  $a\smallfrown b$ or $a\smallfrown c$. \label{c4}
\end{enumerate}
With the {\em non-tangential inclusion}\/ relation $\ll$ (denoted sometimes by $\ll_\smf$ as well) on $A$ defined by
$(a\ll b\iff a\not\smallfrown b^*),$
these conditions may equivalently be stated as
\begin{enumerate}
\renewcommand{\theenumi}{{\rm I\,}\arabic{enumi}}
\item $a \ll b \text{ implies } a \leq b$; \label{di1}
\item  $a\leq b\ll c\leq d$ implies $a\ll d$; \label{di3}
\item $0\ll 0$, and $a\ll b$ implies $b^*\ll a^*$; \label{di7}
\item $a\ll c$ and $b\ll c$ implies $a\vee b\ll c$. \label{di4}
\end{enumerate}

 In terms of $\ll$, the contact relation $\smallfrown$ takes the form $(a\smallfrown b\iff a\not\ll b^*)$.

A contact algebra $A$ is said to be a {\em normal contact algebra}\/ (see \cite{deV,F}, where, however, different names have been used) if it also satisfies  the condition

\smallskip

\noindent \ \  I5. if  $a\ll c$ and $c>0$, then $a\ll b\ll c$  for some $b>0$.

\smallskip

The typical examples of  (normal) contact algebras can be obtained as follows.

A set $F$ in a topological space $X$ is {\em regular closed} (or a {\em closed domain} \cite{E}) if it is the closure of its interior in $X$: $F={\rm cl(int}(F))$. The collection ${\RC}(X)$ of all regular closed sets in $X$ becomes a Boolean algebra, with the Boolean operations $\vee,\wedge,\,^*,0,1$ given by
\begin{align*}
F\vee G &= F\cup G, & F\wedge G &= \mbox{{\rm cl}}(\mbox{{\rm int}}(F\cap G)), & F^* &=  \mbox{{\rm cl}}(X\setminus F), & 0 &=  \emptyset, & 1 &=  X.
\end{align*}
The Boolean algebra ${\sf RC}(X)$ is actually complete, with the infinite joins and meets given by
\begin{align*}
 \bigvee_{i\in I}F_i &=  \mbox{{\rm cl}}(\bigcup_{i\in I}F_i)\
=\mbox{{\rm cl}}(\bigcup_{i\in I}\mbox{{\rm int}}(F_i))=\mbox{{\rm cl}}(\mbox{{\rm int}}(\bigcup_{i\in I}F_i))\quad \text{and}\quad
\bigwedge_{i\in I}F_i &= \mbox{{\rm cl}}(\mbox{{\rm int}}(\bigcap_{i\in I}F_i)).
 \end{align*}
 With the {\em contact relation}
 $\smallfrown_X$ given by
$$F\smallfrown_X G\iff F\cap G\neq\emptyset,$$
$({\sf RC}(X),\smf_X)$ becomes a  contact algebra and if $X$ is a normal Hausdorff space, then $({\sf RC}(X),\smf_X)$ becomes even a normal contact algebra (see \cite{deV,DV1}).

In $({\sf RC}(X),\smallfrown_X)$, the associated relation $\ll_X$ reads as
$$F\ll_X G\iff F\subseteq \mbox{{\rm int}}_X(G).$$
\end{facts}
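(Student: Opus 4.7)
The plan is to unwind the definition of $\ll_X$ through the contact relation $\smallfrown_X$ and compute explicitly with the Boolean operations of $\RC(X)$. By the general identity $(a\ll b\iff a\not\smallfrown b^*)$ in any contact algebra, which was recorded at the start of this Facts, one has $F\ll_X G\iff F\not\smallfrown_X G^*$, where $G^*$ denotes the Boolean complement of $G$ computed in $\RC(X)$. The key point is that this complement is not the set-theoretic $X\stm G$, which is generally not regular closed, but rather $G^*=\cl(X\stm G)$, as spelled out above.

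First I would substitute the definition of $\smallfrown_X$, namely non-empty intersection, so that $F\not\smallfrown_X G^*$ becomes $F\cap \cl(X\stm G)=\emptyset$. Next I would invoke the elementary identity $\cl(X\stm G)=X\stm\int_X(G)$, which holds in any topological space, to rewrite this as $F\cap(X\stm\int_X(G))=\emptyset$. Finally, this last condition is a tautological reformulation of the set-theoretic inclusion $F\sbe\int_X(G)$. Chaining the equivalences yields $F\ll_X G\iff F\sbe\int_X(G)$, as claimed.

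The whole argument thus amounts to an unpacking of definitions together with a single duality between closure and interior under complementation. The only place where one must tread carefully is the computation of the Boolean complement in $\RC(X)$, since the set-theoretic complement $X\stm G$ typically fails to be regular closed and must be replaced by its closure. Once this has been acknowledged, no further input from general topology is required.
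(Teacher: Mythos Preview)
Your argument for the identity $F\ll_X G\iff F\subseteq\int_X(G)$ is correct and is exactly the routine unpacking one would expect; the only delicate point, which you have handled properly, is that the Boolean complement in $\RC(X)$ is $G^*=\cl(X\setminus G)=X\setminus\int_X(G)$ rather than the set-theoretic complement.

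Note, however, that the paper does not supply a proof for this Facts block at all: it is presented as background material with references to \cite{deV,DV1}, so there is no ``paper's own proof'' to compare against. Moreover, the block contains several other assertions besides the one you address---the equivalence of axioms (C1)--(C4) with (I1)--(I4), the fact that $\RC(X)$ is a complete Boolean algebra under the indicated operations, that $(\RC(X),\smf_X)$ is a contact algebra, and that it is normal when $X$ is normal Hausdorff. Your proposal treats only the final displayed identity. If the intent was to justify the entire Facts block, these other claims would need separate (equally routine) verification; if only the $\ll_X$ characterization was the target, your proof is complete.
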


\begin{facts}\label{deVriesl}
\rm
Extending the contact relation
$\smallfrown$ on a Boolean algebra $A$ to a relation on the set $\Ult(A)$ of
 its ultrafilters
 (which, for brevity, will again be denoted by $\smallfrown$)
 by
$$\mathfrak u\smallfrown\mathfrak v\iff \forall\; c\in\mathfrak u,\; d\in\mathfrak v:c\smallfrown d,$$
one shows (\cite[Lemma 3.5, p. 222]{DV1})
that the contact relation for elements $a,b\in A$ is characterized by its ultrafilter extension, via
$$a\smallfrown b\iff \exists\; \mathfrak{u, v}\in{\rm Ult}(A): a\in\mathfrak u,\; b\in\mathfrak v,\; \mathfrak u\smallfrown\mathfrak v.$$
Furthermore, {\em if $A$ is normal, then \  $\smallfrown$ \ is an equivalence relation on\/ ${\rm Ult}(A)$} (\cite{DV1,DUV}).
 We will often use the concept of {\em cluster}\/ (\cite{LE,DV1})  in a normal contact algebra $A$; this is a subset $\mathfrak c$ of $A$ satisfying the following conditions for all $a,b\in A$:
\begin{enumerate}
\renewcommand{\theenumi}{cl\,\arabic{enumi}}
\item $\mathfrak c\neq\emptyset$;
\item\label{cl1}  $a,b\in\mathfrak c$ implies $a\smallfrown b$;
\item\label{cl2} $a\vee b\in\mathfrak c$ implies $a\in\mathfrak c$ or
$b\in\mathfrak c$;
\item\label{cl3} if $a\smallfrown b$ for all $b\in\mathfrak c$, then $a\in\mathfrak c$.
\end{enumerate}

\noindent We denote the set of all clusters in a normal contact algebra $A$ by ${\rm Clust}(A)$.
As an easy consequence of the definition of a cluster one has the property
$(a\in\mathfrak c,\, a\le b\Longrightarrow b\in\mathfrak c)$ (i.e., they are upwards closed).
Proceeding as in the proof of Theorem 5.8 of \cite{NW} one shows that {\em every ultrafilter\/ $\mathfrak u$ in a normal contact algebra $A$ generates the cluster}
$${\mathfrak c}_{\mathfrak u}=\{a\in A\,|\,\forall\, b\in\mathfrak u: a\smallfrown b\},$$
and {\em every cluster $\mathfrak c$ in $A$ comes about this way}, that is: $\mathfrak c={\mathfrak c}_{\mathfrak u}$, for some $\mathfrak u\in{\rm Ult}(A)$; actually, {\em for  every $a\in \mathfrak c$ one has $\mathfrak c={\mathfrak c}_{\mathfrak u}$, for some $\mathfrak u\in{\rm Ult}(A)$ with $a\in\mathfrak u$.} One concludes that $\mathfrak c_{\mathfrak u}$ {\em is the unique cluster containing a given ultrafilter} $\mathfrak u$, and that {\em any two clusters comparable by inclusion must actually be equal.} Most importantly, the
relation \ $\smallfrown$ \ for ultrafilters is characterized by $$\mathfrak u\smallfrown\mathfrak v\iff \mathfrak c_{\mathfrak u}=\mathfrak c_{\mathfrak v}.$$

We will also use the concept of {\em clan}\/ in a contact algebra $A$: clans are those  subsets $\mathfrak c$ of $A$ which satisfy conditions (cl1)-(cl3) and are upward closed (see \cite{Thron,DV1}). Clusters and clans permit the following characterization of contact relation: {\em if $(A,\smf)$ is a contact algebra (respectively, normal contact algebra) then, for every $a,b\in A$, $a\smf b$ if, and only if, there exists a clan (respectively, cluster) in $(A,\smf)$ which contains $a$ and $b$} (\cite[Proposition 3.3 and Corollary 3.4, p.222]{DV1}). Also, {\em every clan coincides with the union of all ultrafilters contained in it} (\cite[Fact 3.3(v), p.219]{DV1}).
\end{facts}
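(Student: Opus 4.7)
The plan is to establish, for every clan $\mathfrak{c}$ in a contact algebra $(A,\smf)$, the equality
\[
\mathfrak{c} \;=\; \bigcup\{\ult \in \Ult(A) : \ult \subseteq \mathfrak{c}\}.
\]
The inclusion from right to left is immediate, since each ultrafilter on the right is by hypothesis a subset of $\mathfrak{c}$. For the reverse inclusion I fix $a \in \mathfrak{c}$ and aim to build an ultrafilter $\ult$ of $A$ with $a \in \ult \subseteq \mathfrak{c}$. Only three structural features of $\mathfrak{c}$ will be needed: the prime-like condition (cl2), upward closure, and the fact that $0 \notin \mathfrak{c}$, which is forced by (C2) since $0 \nsmf 0$.

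The principal filter $\{b \in A : a \le b\}$ is contained in $\mathfrak{c}$ by upward closure and contains $a$, so the poset $\Phi$ of filters $F$ of $A$ with $a \in F \subseteq \mathfrak{c}$, ordered by inclusion, is non-empty; it is plainly closed under directed unions. Zorn's Lemma supplies a maximal element $\ult \in \Phi$, and it remains to show that $\ult$ is an ultrafilter. Suppose, towards a contradiction, that some $b \in A$ satisfies $b \notin \ult$ and $b^* \notin \ult$. The filter generated by $\ult \cup \{b\}$ strictly contains $\ult$ and still contains $a$, so by maximality of $\ult$ it must escape $\mathfrak{c}$; using upward closure of $\mathfrak{c}$ one then extracts $c_1 \in \ult$ with $b \wedge c_1 \notin \mathfrak{c}$, and symmetrically $c_2 \in \ult$ with $b^* \wedge c_2 \notin \mathfrak{c}$. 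Set $c \df c_1 \wedge c_2 \in \ult \subseteq \mathfrak{c}$; upward closure forces both $b \wedge c \notin \mathfrak{c}$ and $b^* \wedge c \notin \mathfrak{c}$. But $c = (b \wedge c) \vee (b^* \wedge c) \in \mathfrak{c}$, contradicting (cl2).

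The main obstacle I anticipate is conceptual rather than computational: the definition of a clan mixes a pairwise contact condition with a lattice-theoretic prime condition, and it is tempting to use the former in the construction. In fact, the argument is a standard Boolean prime-filter construction in which (cl2) plays the role ordinarily taken by the primality of the ideal complementary to an ultrafilter, so only the lattice-theoretic content of the clan axioms is actually used. The one point of real care is combining the two separately obtained witnesses $c_1, c_2 \in \ult$ into a single meet $c$ that simultaneously exhibits the failure on both sides; without this step the derivation of a contradiction from (cl2) is not available.
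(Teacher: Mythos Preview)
The paper does not give its own proof here: Facts~\ref{deVriesl} is a block of background results quoted from the literature, and the particular claim you address (that every clan is the union of the ultrafilters it contains) is simply cited to \cite[Fact 3.3(v), p.219]{DV1}. Your argument is correct and is the standard one: a Zorn's-lemma construction of a maximal filter inside the clan containing the chosen element, followed by the usual ``prime complement'' trick to show that maximality forces the ultrafilter property. The key step of passing from the two separate witnesses $c_1,c_2$ to their meet $c=c_1\wedge c_2$ before invoking the prime condition is exactly what is needed.

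Two minor points of presentation. First, a labeling slip: in the paper's numbering the prime-like condition $a\vee b\in\mathfrak{c}\Rightarrow a\in\mathfrak{c}$ or $b\in\mathfrak{c}$ is (cl\,3), not (cl\,2); condition (cl\,2) is the pairwise-contact property. Second, the claim $0\notin\mathfrak{c}$ is not a consequence of (C2) alone: you need (cl\,2) as well, since from $0\in\mathfrak{c}$ one first gets $0\smf 0$ by (cl\,2), and then the contradiction with (C2). Neither point affects the correctness of the argument.
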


In \cite{Roeper}, Roeper introduced the notion of {\em region-based
topology}, as a Boolean
algebra provided with a contact relation
and an one-place
predicate of {\em boundedness}.  His  Representation Theorem (see Theorem \ref{roeperl} below)
implies the existence of a bijective correspondence between the class of all (up
to isomorphism) region-based topologies and the class of all (up to
homeomorphism) locally compact Hausdorff spaces.
The axioms of a region-based topology almost coincide with those
  of a {\em local proximity space}\/ as introduced by Leader
\cite{LE}.  This similarity led us to calling
region-based topologies {\em local
contact algebras}\/ (\cite{VDDB, DV1}) since, with the methods of local proximity spaces, it enabled us to give
 a shorter
proof of Roeper's  Representation
Theorem (\cite{VDDB, D-diss}).
Below we recall the needed definitions and assertions concerning local contact algebras.

\begin{nist}\label{locono}
\rm
A {\it  local contact algebra} is a contact algebra
$(A,\smallfrown)$, provided with a (not necessarily proper) ideal $\BBBB$ of $A$ whose elements we call {\em bounded}, satisfying
the following axioms (where $\ll$ is as in \ref{deVriesAlgebraicl}):

\medskip

\noindent(BC1) if $a\ll c$ with $a$ bounded, then $a\ll b\ll c$ for some bounded element $b$\,;\\
(BC2) if $a\smallfrown b$, then $a\smallfrown (c\we b)$ for some bounded element $c$\,;\\
(BC3) for every element $a\neq 0$ there is a bounded element  $b\neq 0$ with $b\ll a$.

\medskip
\noindent The local contact algebra is {\em complete}\/ if its underlying Boolean algebra is so. It is easy to see that {\em the local contact algebras in which all elements are bounded are equivalently described as the normal contact algebras of}\/ Facts \ref{deVriesAlgebraicl}.

In the sequel, for brevity, writing $``A$ (respectively, $A_1$, etc.) is a local contact algebra'', we will mean that the notation for all its components is fixed to be $(A,\smf,\BBBB)$ (respectively, $(A_1,\smf_1,\BBBB_1)$, etc.). Of course, there are many local contact algebras with one and the same underlying Boolean algebra, but our notation should not cause problems.
\end{nist}

From \cite{VDDB} we recall
the lattice-theoretical
counterparts of some theorems of Leader's paper \cite{LE}.

\begin{defi}\label{Alexprn}{\rm (\cite{VDDB})}
For a local contact algebra $A$
one defines the {\em Alexandroff extension} $\smallfrown_{\rm\! Al}$ of\/
$\smallfrown$ by
$$ a \smf_{\rm\! Al} b\;\Longleftrightarrow\; (a\smallfrown b\ \mbox{ or }\ a,b\in A\stm\BBBB).$$
\end{defi}

\begin{lm}\label{Alexprn1}{\rm (\cite{VDDB})}
The Alexandroff extension $\smallfrown_{\rm\! Al}$ of the contact relation $\smallfrown$ of a local contact algebra $A$ makes $(A,\smallfrown_{\rm\! Al})$
a normal contact algebra.
\end{lm}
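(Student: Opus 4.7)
The plan is to verify, in turn, the four contact-algebra axioms C1--C4 for $\smf_{\rm Al}$ and then the normality condition I5, working always through the convenient reformulation
\[
a \ll_{\rm Al} c \iff a \ll c \text{ and } (a\in\BBBB \text{ or } c^*\in\BBBB),
\]
which one obtains by negating the definition of $\smf_{\rm Al}$ and applying De~Morgan. Throughout, I will use that $\BBBB$ is an ideal: $0\in\BBBB$, $\BBBB$ is downward closed, and $b\vee c\in\BBBB$ if and only if both $b,c\in\BBBB$. Note also that if $\BBBB = A$ there is nothing to prove (the statement reduces to the normal contact algebra structure already present), so we may assume $\BBBB$ is a proper ideal, i.e.\ $1\notin\BBBB$.

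For the contact-algebra axioms, C1 and C3 are immediate from the definition and from the corresponding axioms for $\smf$. For C2, if $a\smf_{\rm Al} b$ then either $a\smf b$ (and C2 for $\smf$ gives $a,b>0$) or both $a,b$ are unbounded, hence both nonzero since $0\in\BBBB$. For C4, the implication $a\smf_{\rm Al} b$ or $a\smf_{\rm Al} c \Longrightarrow a\smf_{\rm Al}(b\vee c)$ splits into two easy cases: if $a\smf b$ then $a\smf (b\vee c)$ by C4 for $\smf$; if $a,b$ are unbounded then $b\vee c\ge b$ is unbounded too, so $a\smf_{\rm Al}(b\vee c)$. Conversely, if $a\smf_{\rm Al}(b\vee c)$ then either $a\smf(b\vee c)$, in which case C4 for $\smf$ gives the conclusion, or both $a$ and $b\vee c$ are unbounded; in the latter case, since $\BBBB$ is an ideal, at least one of $b$ or $c$ is unbounded, and pairing it with $a$ yields $a\smf_{\rm Al} b$ or $a\smf_{\rm Al} c$.

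The substantive step is verifying I5: given $a\ll_{\rm Al} c$ with $c>0$, produce $b>0$ with $a\ll_{\rm Al} b\ll_{\rm Al} c$. Using the reformulation above, $a\ll c$ holds and either $a\in\BBBB$ or $c^*\in\BBBB$. In the first case, BC1 applied to $a\ll c$ yields a bounded $b$ with $a\ll b\ll c$; this immediately gives $a\ll_{\rm Al} b\ll_{\rm Al} c$ since both $a$ and $b$ are bounded. If $a>0$ then $a\le b$ forces $b>0$; if $a=0$, we discard the $b$ produced by BC1 and instead invoke BC3 on $c>0$ to get a bounded $b>0$ with $b\ll c$, noting that $0\ll b$ holds by I7 and I3. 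In the remaining case $a\notin\BBBB$ we must have $c^*\in\BBBB$; applying I7 to $a\ll c$ gives $c^*\ll a^*$, and BC1 now yields a bounded $d$ with $c^*\ll d\ll a^*$. Setting $b\df d^*$ and applying I7 again gives $a\ll b\ll c$ together with $b^*=d\in\BBBB$, whence $a\ll_{\rm Al} b$ and $b\ll_{\rm Al} c$. Finally $b>0$: otherwise $d=1\in\BBBB$, contradicting the properness of $\BBBB$.

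The main obstacle is the second case of I5, where $a$ is unbounded: here BC1 cannot be applied directly to $a\ll c$, and the trick is to dualize via I7 in order to reduce to a situation with a bounded left-hand side, then dualize back. Once this reduction is in place, properness of $\BBBB$ handles the strict positivity of the interpolant.
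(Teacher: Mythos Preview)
Your proof is correct. The paper does not actually prove this lemma; it simply cites \cite{VDDB}, so there is no approach to compare against beyond noting that yours is the expected direct verification of the axioms.

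One minor point: you repeatedly invoke an axiom ``I7,'' which does not exist in the paper's numbering. The property you mean, namely $a\ll b\Rightarrow b^*\ll a^*$, is part of I3 here, and the fact that $0\ll b$ for all $b$ follows from I3 (which gives $0\ll 0$) together with I2. You should adjust the references accordingly. Also, in the sentence handling the case $\BBBB=A$, it would be cleaner to cite the remark at the end of \ref{locono} that local contact algebras with $\BBBB=A$ are exactly normal contact algebras, rather than leaving this implicit.
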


\begin{defi}\label{boundcl}
\rm
By a {\em cluster in} a local contact algebra $A$
we mean a cluster in the normal contact algebra
$(A,\smf_{\rm\! Al})$. We call a cluster $\mathfrak c$ in $A$
{\em bounded}\/ if
$\mathfrak c\cap\BBBB\nes$, and {\em unbounded} otherwise; likewise for an ultrafilter $\mathfrak u$ in the underlying Boolean algebra of $A$.
The set of all
bounded clusters (of all bounded ultrafilters) in $A$
is denoted by
$\BClu(A)$ (by $\BUlt(A)$, respectively).
\end{defi}

\begin{lm}\label{neogrn}{\rm (\cite{VDDB})}
Let $A$ be a local contact algebra  with
$1\not\in\BBBB$. Then the set ${\mathfrak c}_\infty\df A\stm \BBBB$ is a cluster in $A$; in fact, it is
 the only unbounded cluster in $A$: $$\BClu(A)=\Clust(A,\smf_{\rm\! Al}) \stm \{{\mathfrak c}_\infty\}.$$
\end{lm}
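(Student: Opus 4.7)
The plan is to verify directly the four cluster axioms of Facts \ref{deVriesl} for $\mathfrak{c}_\infty\df A\stm\BBBB$ in the normal contact algebra $(A,\smf_{\rm\! Al})$, whose normality is guaranteed by Lemma \ref{Alexprn1}, and then to establish uniqueness by a short separate argument.

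First, I would dispose of the three easy axioms. Nonemptiness follows from the hypothesis $1\notin\BBBB$, which places $1$ in $\mathfrak{c}_\infty$. The pairwise contact condition $a,b\in\mathfrak{c}_\infty \Rightarrow a\smf_{\rm\! Al} b$ is built into Definition \ref{Alexprn}, since then both elements lie outside $\BBBB$. The primeness condition holds because the ideal $\BBBB$ is closed under finite joins: if $a\vee b\notin\BBBB$, then not both of $a$ and $b$ can be in $\BBBB$, so at least one lies in $\mathfrak{c}_\infty$.

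The main work lies in the fourth cluster axiom: if $a\smf_{\rm\! Al} b$ for every $b\in\mathfrak{c}_\infty$, then $a\in\mathfrak{c}_\infty$. I would argue by contradiction, assuming $a\in\BBBB$. Axiom C2 yields $a\not\smf 0$, so $a\ll 1$; since $a$ is bounded, (BC1) produces a bounded element $c$ with $a\ll c\ll 1$. Because $\BBBB$ is proper (as $1\notin\BBBB$), from $c\in\BBBB$ one deduces $c^*\notin\BBBB$: otherwise $1=c\vee c^*\in\BBBB$. Now $a\ll c$ is equivalent to $a\not\smf c^*$, and combined with $a\in\BBBB$ this gives $a\not\smf_{\rm\! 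Al} c^*$ via Definition \ref{Alexprn}. Taking $b=c^*\in\mathfrak{c}_\infty$ contradicts the hypothesis; hence $a\in\mathfrak{c}_\infty$.

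For uniqueness, let $\mathfrak{c}$ be any unbounded cluster in $(A,\smf_{\rm\! Al})$. Then $\mathfrak{c}\cap\BBBB=\emptyset$ immediately gives $\mathfrak{c}\subseteq\mathfrak{c}_\infty$, while for the reverse containment any $a\in\mathfrak{c}_\infty$ satisfies $a\smf_{\rm\! Al} b$ for every $b\in\mathfrak{c}$ (both lie outside $\BBBB$), so the fourth cluster axiom applied to $\mathfrak{c}$ forces $a\in\mathfrak{c}$, whence $\mathfrak{c}=\mathfrak{c}_\infty$. The only genuine obstacle in the whole argument is the verification of the fourth axiom: it is precisely there that the interplay between (BC1) and the properness of $\BBBB$ (secured by $1\notin\BBBB$) is essential.
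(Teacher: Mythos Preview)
Your proof is correct. The paper itself does not supply a proof of this lemma but merely cites \cite{VDDB}, so there is nothing to compare against; your direct verification of the cluster axioms and the uniqueness argument are complete and accurate.
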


\begin{pro}\label{stanlocn}{\rm (\cite{Roeper,VDDB})}
Let $X$ be a locally compact Hausdorff space. Then:

\smallskip

\noindent{\rm(a)} The  contact algebra
$(\RC(X),\smallfrown_{X})$ becomes a complete local contact algebra with bounded elements given by the ideal  $\CR(X)$ of all compact regular closed sets in $X$; we  call it
the\/ {\em standard local contact algebra of} $X$.

\smallskip

\noindent{\rm (b)} For every $x\in X$, the set $\{F\in \RC(X)\st x\in F\}$ is a bounded cluster in the
standard local contact algebra of $X$.
\end{pro}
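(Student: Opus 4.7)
The approach is to verify both parts by reducing the axioms to standard topological facts about locally compact Hausdorff spaces, after recalling that $(\RC(X),\smf_X)$ is already known to be a complete contact algebra from Facts \ref{deVriesAlgebraicl}.

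For part (a), the plan is first to confirm that $\CR(X)$ is an ideal of $\RC(X)$. Downward closure is automatic: if $F\leq G$ in $\RC(X)$ with $G$ compact, then $F\subseteq G$ is a closed subset of a compact set, hence compact. For joins, note that $F\vee G=F\cup G$ is compact whenever $F,G$ are. Then the three Roeper axioms are verified in turn. For BC3, given $F\neq 0$ in $\RC(X)$, pick $x\in\Int(F)$ and use local compactness of $X$ to find an open $U$ with $x\in U\subseteq\cl(U)\subseteq\Int(F)$ and $\cl(U)$ compact; then $G\df\cl(U)=\cl(\Int(\cl(U)))$ is a non-zero compact regular closed set (by Hausdorffness and the choice of $U$, or by replacing $U$ with $\Int(\cl(U))$), and $G\ll F$ since $G\subseteq\Int(F)$. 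For BC1, if $F$ is compact with $F\ll G$, i.e.\ $F\subseteq\Int(G)$, apply the familiar interpolation property in locally compact Hausdorff spaces to obtain an open $U$ with compact closure such that $F\subseteq U\subseteq\cl(U)\subseteq\Int(G)$, and take $H\df\cl(U)$; then $F\ll H\ll G$, and $H$ is bounded. For BC2, given $F\smf_X G$, pick $x\in F\cap G$ and a compact regular closed neighbourhood $C$ of $x$ (as constructed above). Then $x$ lies in $\Int(C)\cap G$ and in $F$; one checks that the point $x$ actually belongs to $F\cap\cl(\Int(C\cap G))=F\cap(C\we G)$, so $F\smf_X(C\we G)$ with $C$ bounded.

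For part (b), write $\s_x\df\{F\in\RC(X)\st x\in F\}$ and verify that $\s_x$ is a cluster in $(\RC(X),\smf_{\rm Al})$. Non-emptiness is immediate as $X\in\s_x$; the upward-closure under $\smf_{\rm Al}$ (axiom cl2) follows because any two members of $\s_x$ share the point $x$, so they are even $\smf_X$-related; the disjunction axiom cl3 holds because $F\cup G\ni x$ forces $x\in F$ or $x\in G$. For the key axiom cl4, suppose $x\notin F$; by regularity of $X$ (a locally compact Hausdorff space) one picks an open $V$ with $x\in V\subseteq\cl(V)\subseteq X\setminus F$ and $\cl(V)$ compact, and then $G\df\cl(V)\in\s_x$ is bounded and satisfies $F\cap G=\es$, so $F\nsmf_X G$; because $G$ is bounded, this implies $F\nsmf_{\rm Al}G$, contradicting the hypothesis. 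Finally, boundedness of the cluster $\s_x$ is witnessed by any compact regular closed neighbourhood of $x$ produced as in part (a).

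The main technical nuisance is BC2: one must pass from a raw set-theoretic intersection $F\cap G$ to a contact with the regularised meet $C\we G=\cl(\Int(C\cap G))$. The cleanest way is to choose the compact neighbourhood $C$ so that $x\in\Int(C)$, which forces $x$ into $\Int(C)\cap G\subseteq\Int(C\cap G)$ and hence into $C\we G$, intersected with $F$. All the remaining steps are routine consequences of local compactness, Hausdorffness, and the definitions of $\RC(X)$, $\CR(X)$, and $\smf_{\rm Al}$.
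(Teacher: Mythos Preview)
The paper does not give its own proof of this proposition; it is quoted from the references \cite{Roeper,VDDB}. Your direct verification is the natural approach and is essentially sound, with one gap.

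In your final paragraph on BC2 you assert the inclusion $\Int(C)\cap G\subseteq\Int(C\cap G)$. This is false in general: take $X=\mathbb{R}$, $C=[-1,1]$, $G=[0,2]$; then $0\in\Int(C)\cap G=[0,1)$ but $0\notin\Int(C\cap G)=(0,1)$. What is true, and what you actually need, is that $x\in\Int(C)\cap G$ implies $x\in\cl(\Int(C\cap G))=C\wedge G$. This requires using that $G$ is regular closed: given any open neighbourhood $U\subseteq\Int(C)$ of $x$, density of $\Int(G)$ in $G$ produces a point of $U\cap\Int(G)\subseteq\Int(C)\cap\Int(G)\subseteq\Int(C\cap G)$, so $x\in\cl(\Int(C\cap G))$. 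With this correction your argument for BC2 goes through, and the remainder of your proof (the ideal check, BC1, BC3, and the cluster verification in part (b)) is correct.
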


Recall that, in a poset $(A,\le)$
with bottom element $0$, a subset $M\sbe A$ is said to be {\em dense}\/ in $A$ if for every $0\neq a\in A$ there is an $0\neq m\in M$ such that $m\le a$; a map $f:A\to B$
of posets
is said to be {\em dense}\/ if $f(A)$ is dense in $B$.

\begin{theorem}\label{roeperl}
{\rm (Roeper's Representation Theorem for local contact algebras (\cite{Roeper}))}

\noindent{\rm (a}) Every local contact algebra
may be densely embedded
into the
 standard local contact algebra
 of a locally compact Hausdorff
 space by a Boolean monomorphism preserving and reflecting the contact relation and boundedness.
 Moreover, when the algebra is complete, the embedding becomes a Boolean isomorphism.

\smallskip

\noindent{\rm (b)}
There exists a bijective correspondence
 between the class of all complete local contact algebras (up to Boolean isomorphism preserving and reflecting the contact relation and boundedness)
 and the class of all locally compact Hausdorff spaces (up to homeomorphism).
\end{theorem}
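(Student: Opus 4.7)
The plan is to reduce part (a) to the well-known representation theorem for normal contact algebras (as in \cite{deV,F,DV1}) by way of the Alexandroff extension of Definition \ref{Alexprn}. Given a local contact algebra $(A,\smf,\BBBB)$, Lemma \ref{Alexprn1} promotes $(A,\smallfrown_{\rm\! Al})$ to a normal contact algebra, for which the classical theory produces a compact Hausdorff space $Y\df\Clust(A,\smallfrown_{\rm\! Al})$, topologized by taking the sets $\lambda(a)\df\{\mathfrak{c}\in Y\st a\in\mathfrak{c}\}$, $a\in A$, as a closed base, together with a dense Boolean embedding $\lambda:A\hookrightarrow\RC(Y)$ that preserves and reflects $\smallfrown_{\rm\! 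Al}$ and becomes a Boolean isomorphism when $A$ is complete. If $1\in\BBBB$, then $\smf=\smallfrown_{\rm\! Al}$ and $Y$ itself is the desired locally compact Hausdorff space. Otherwise, invoking Lemma \ref{neogrn}, I would set $X\df Y\setminus\{\mathfrak{c}_\infty\}=\BClu(A)$, where $\mathfrak{c}_\infty=A\setminus\BBBB$ is the unique unbounded cluster.

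The crucial step is to show that $\{\mathfrak{c}_\infty\}$ is closed in $Y$, so that $X$ is open in $Y$ and therefore locally compact Hausdorff. Once that is established, I would define $\mu:A\to\RC(X)$ by $\mu(a)\df\lambda(a)\cap X$ and verify: (i) $\mu(a)$ is regular closed in $X$ and $\mu$ is a Boolean monomorphism, using density of $X$ in $Y$ and the corresponding properties of $\lambda$; (ii) $\mu$ preserves and reflects $\smf$, via the cluster-theoretic characterization of contact from \ref{deVriesl} together with the observation that two bounded clusters are $\smallfrown_{\rm\! Al}$-related exactly when they are $\smf$-related; (iii) $\mu$ preserves and reflects boundedness, i.e.\ $a\in\BBBB$ iff $\mu(a)$ is compact in $X$, which reduces to showing that $\mathfrak{c}_\infty\notin\lambda(a)$ iff $a\in\BBBB$, using (BC1) and Lemma \ref{neogrn}. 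Density of $\mu$ is inherited from that of $\lambda$; when $A$ is complete, surjectivity of $\mu$ follows from the fact that $\lambda$ is an isomorphism, combined with the lattice isomorphism $\RC(Y)\to\RC(X)$, $F\mapsto F\cap X$, afforded by the open dense embedding $X\hookrightarrow Y$.

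The main obstacle I anticipate is the topological isolation of $\mathfrak{c}_\infty$, which is what makes the representation genuinely \emph{locally} compact rather than merely compact. Axiom (BC1), via the normality of $(A,\smallfrown_{\rm\! Al})$, should permit separating $\mathfrak{c}_\infty$ from any bounded cluster by basic closed sets $\lambda(a^*)$ and $\lambda(b)$ with $b\in\BBBB$ and $b\ll a$; a careful analysis of which clusters lie in such separators, combined with (BC2) and (BC3), yields closedness of $\{\mathfrak{c}_\infty\}$. Part (b) will then follow routinely from (a) together with Proposition \ref{stanlocn}: starting from a locally compact Hausdorff space $X$, the standard local contact algebra $(\RC(X),\smf_X,\CR(X))$ is complete, and its associated space of bounded clusters is homeomorphic to $X$ via $x\mapsto\{F\in\RC(X)\st x\in F\}$, which is well-defined and bijective by Proposition \ref{stanlocn}(b) together with Hausdorffness and local compactness of $X$.
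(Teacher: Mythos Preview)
Your approach is essentially the same as the paper's: pass to the Alexandroff extension $(A,\smallfrown_{\rm Al})$, invoke the de Vries--type representation for normal contact algebras to obtain the compact Hausdorff cluster space, and then remove the single unbounded cluster $\mathfrak{c}_\infty$ to get the locally compact space $\bclust(A)$, with the embedding $\tau_A$ playing the role of your $\mu$.

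One point of calibration: the step you flag as ``crucial''---closedness of $\{\mathfrak{c}_\infty\}$ in $Y$---is automatic, since $Y$ is already known to be Hausdorff from the normal contact algebra representation. What actually requires the axioms (BC1)--(BC3) is rather the \emph{density} of $X=Y\setminus\{\mathfrak{c}_\infty\}$ in $Y$ (needed for the restriction isomorphism $\RC(Y)\to\RC(X)$ you invoke), together with the verifications in your items (ii) and (iii). For density, one uses (BC3) to produce, for any $a\neq 1$, a bounded $0\neq b\ll a^*$ and hence a bounded cluster avoiding $a$; for (ii) one combines (BC2) with Proposition~\ref{bbcl1}(a) to pass between $\smallfrown$ and $\smallfrown_{\rm Al}$ inside bounded clusters. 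The paper records exactly these facts as items (i)--(iv) of its sketch, so your outline and the paper's coincide.
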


\noindent{\em Proof (Sketch).} (a) For a locally compact Hausdorff space $X$, we let
$ \rcl(X)$
denote its standard local contact algebra.
For a local contact algebra $A$, we
denote by $\bclust(A)$ the topological space
of all bounded clusters in $A$ with a topology for which  the family
$$\{\tau_{A}(a)\df \{{\mathfrak c}\in\BClust(A)\st a\in {\mathfrak c}\}\st a\in A\}$$
is a base for closed sets.
Then:

\smallskip

\noindent(i)  The space $L=\bclust(A)$ is locally compact Hausdorff. Denote by $X$ the set $\Clust(A,\smf_{\rm\! Al})$ endowed with a topology having the family
$\{\{{\mathfrak c}\in\Clust(A,\smf_{\rm\! Al})\st a\in {\mathfrak c}\}\st a\in A\}$
as a base for closed sets. Then $1\nin\BBBB$ implies that $X$ is the Alexandroff (one-point) compactification of $L$; if $1\in\BBBB$, then
 $L\equiv X$.

\smallskip

\noindent(ii) $\tau_{A}$ is a dense Boolean monomorphism of the Boolean
algebra $A$ into
the Boolean algebra $\RC(L)$, and if $A$ is complete, then $\tau_{A}$
is a Boolean isomorphism onto the Boolean algebra $\RC(L)$;

\smallskip

\noindent(iii) an element $a\in A$ is bounded if, and only if, $\tau_A(a)$ is compact: ($a\in\BBBB \iff \tau_{A}(b)\in \CR(L))$;

\smallskip

\noindent(iv)
$(\;a\smallfrown b \iff \tau_{A}(a)\cap \tau_{A}(b)\neq\ems\;)$ for all $a,b\in A$.

\smallskip

\noindent(v)  In other words, $\tau_{A}$ embeds the local contact
algebra $A$ into the standard local contact algebra
$\RC(L)$, with the embedding preserving and reflecting the contact relation and boundedness; moreover,
if $A$ is complete, then $\tau_A: A\lra \RC(L)$
is a Boolean isomorphism.

\medskip

\noindent(b)
Let $L$ be a locally compact Hausdorff space. Then the map
$$ \s_L:L\lra\bclust(\rcl(L)),\;x\longmapsto\{F\in \RC(L)\st x\in
F\},$$  is a homeomorphism.
This fact and item (v) above
imply our assertion.
\sqs

Next we recall the extension of de Vries' Duality Theorem \cite{deV} as obtained in \cite{D-AMH1-10}.

\begin{defi}\label{dhc}{\rm \cite{D-AMH1-10}}
\rm
The objects of the category $\DHLC$ are all complete local contact algebras; its
morphisms are maps $\p:A_1\lra
A_2$
satisfying the following
conditions:

\smallskip

\noindent(CLC1) $\p(0)=0$;\\
(CLC2) $\p(a\we b)=\p(a)\we \p(b)$, for all $a,b\in A_1$;\\
(CLC3) if
$a\ll b$ in $A_1$ with $a$ bounded, then $(\p(a^*))^*\ll
\p(b)$;\\
(CLC4) for every bounded element $b\in A_2$ there is a bounded $a\in A_1$ with
$b\le\p(a)$;\\
\noindent(CLC5) $\p(a)=\bigvee\{\p(b)\st b\in A_1 \text{ bounded, } b\ll a\}$,
for every $a\in A_1$.

\medskip

{\noindent}The composite $\psi\diamond\p$ in $\DHLC$ of morphisms
$\p:A_1\lra A_2$ and
$\psi:A_2\lra A_3$
is given by
$$ \psi\diamond\p = (\psi\circ\p)\cuk,$$
 where, for any
function $f:A\to A\ap$ of
$\DHLC$-objects, we have put $f\cuk:A\to
A\ap$ with
$$ f\cuk(a)=\bigvee\{f(b)\st b\in A\text{ bounded, }\; b\ll a\},$$
for every $a\in A$.
\end{defi}

Here are some immediate consequences
  of the conditions (CLC1)-(CLC5):

 \begin{lm}\label{pf1}{\rm \cite{D-AMH1-10}}
Let $\p:A\to A\ap$ be a $\CLCA$-morphism.  Then:

\smallskip

\noindent{\rm (a)} for every $a\in A$,
$\p(a^*)\le (\p(a))^*$ and $\p\cuk(a)\le\p(a)$;\\
{\rm (b)} if
$b_1\ll a_1$ and $b_2\ll a_2$ with $b_1,b_2$ bounded, then $\p(b_1\vee b_2)\ll \p(a_1)\vee \p(a_2)$.
\end{lm}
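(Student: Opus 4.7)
The plan is to establish (a) by direct Boolean manipulation and then use it to derive (b).

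For the first inequality of (a), I would note that $a\we a^*=0$, so (CLC2) and (CLC1) together give $\p(a)\we\p(a^*)=\p(a\we a^*)=\p(0)=0$; in a Boolean algebra this is precisely the assertion $\p(a^*)\le(\p(a))^*$. For the second inequality, I would first record that $\p$ is monotone: from (CLC2), $a\le b$ yields $a\we b=a$, hence $\p(a)=\p(a)\we\p(b)$, so $\p(a)\le\p(b)$. Now for any bounded $b\in A$ with $b\ll a$, axiom I1 gives $b\le a$, whence $\p(b)\le\p(a)$. Taking the join over all such $b$, the definition of $\p\cuk$ then yields $\p\cuk(a)\le\p(a)$.

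For (b), the idea is to sandwich $\p(b_1\vee b_2)$ between a join that splits cleanly across the two indices and can thereby be attacked by (CLC3) componentwise. De Morgan and (CLC2) give
\[ \p((b_1\vee b_2)^*)=\p(b_1^*\we b_2^*)=\p(b_1^*)\we\p(b_2^*), \]
and applying (a) to $b_1\vee b_2$ shows that this quantity is $\le(\p(b_1\vee b_2))^*$. Taking complements, one obtains
\[ \p(b_1\vee b_2)\le \p(b_1^*)^*\vee\p(b_2^*)^*. \]
Next, (CLC3) applied to each hypothesis $b_i\ll a_i$ (with $b_i$ bounded) yields $\p(b_i^*)^*\ll\p(a_i)$ for $i=1,2$. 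Axiom I2 then upgrades these to $\p(b_i^*)^*\ll\p(a_1)\vee\p(a_2)$, and I4 combines them into $\p(b_1^*)^*\vee\p(b_2^*)^*\ll\p(a_1)\vee\p(a_2)$. A final application of I2 with the preceding displayed inequality completes the argument.

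The only mildly subtle point is that $\CLCA$-morphisms do not a priori preserve joins, so one cannot immediately split $\p(b_1\vee b_2)$ into $\p(b_1)\vee\p(b_2)$; the detour through part (a) together with De Morgan provides the suitable replacement $\p(b_1^*)^*\vee\p(b_2^*)^*$, which does split cleanly across the two indices and is directly amenable to (CLC3). Everything else is a routine invocation of the $\ll$-axioms I2 and I4.
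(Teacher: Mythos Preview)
Your proof is correct. The paper itself does not supply a proof of this lemma; it merely cites \cite{D-AMH1-10}, so there is no argument to compare against. Your derivation of (a) from (CLC1)--(CLC2) is the standard one, and your route to (b) via the inequality $\p(b_1\vee b_2)\le \p(b_1^*)^*\vee\p(b_2^*)^*$ followed by (CLC3) and the axioms I2, I4 is clean and complete.
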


We will also need the following simple technical lemma from \cite{D-AMH1-10}:

\begin{lm}\label{assocuk}
For functions  $f:A_1\to A_2$, $g: A_2\to A_3$ of\/ $\bf CLCA$-objects one has:

\smallskip

\noindent{\rm (a)} $(g\cuk\circ f)\cuk=(g\circ f)\cuk$;

\noindent{\rm (b)} If $f$ and $g$ are monotone, then
$(g\circ f\cuk)\cuk=(g\circ f)\cuk$.
\end{lm}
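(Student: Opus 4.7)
The plan is to unfold both equalities via the definition $f\cuk(a)=\bigvee\{f(b)\st b\in\BBBB,\ b\ll a\}$ and then to compare the resulting iterated suprema using the interpolation supplied by axiom (BC1), together with the monotonicity hypothesized outright in (b) and available in the intended applications of (a), where $f$ and $g$ arise as composites of $\CLCA$-morphisms and are therefore monotone.

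For (b), I first unfold $(g\circ f\cuk)\cuk(a)=\bigvee\{g(f\cuk(b))\st b\in\BBBB,\ b\ll a\}$ and $(g\circ f)\cuk(a)=\bigvee\{g(f(b))\st b\in\BBBB,\ b\ll a\}$. The inequality $(g\circ f)\cuk(a)\le(g\circ f\cuk)\cuk(a)$ follows by interpolation: given $b\in\BBBB$ with $b\ll a$, (BC1) produces $b_1\in\BBBB$ with $b\ll b_1\ll a$, so $f(b)$ appears as a summand in $f\cuk(b_1)$, giving $f(b)\le f\cuk(b_1)$, and monotonicity of $g$ yields $g(f(b))\le g(f\cuk(b_1))\le(g\circ f\cuk)\cuk(a)$. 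For the reverse inequality, every $c\in\BBBB$ with $c\ll b$ satisfies $c\le b$ by I1, so monotonicity of $f$ forces $f(c)\le f(b)$; taking suprema yields $f\cuk(b)\le f(b)$, and monotonicity of $g$ completes the argument.

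For (a), after analogous unfolding the two sides become $\bigvee\{g(c)\st c\in\BBBB\text{ and }\exists\, b\in\BBBB\text{ with }b\ll a,\ c\ll f(b)\}$ and $\bigvee\{g(f(b))\st b\in\BBBB,\ b\ll a\}$. The inequality $\le$ again follows from monotonicity of $g$ via $c\le f(b)$ by I1. For the reverse inequality, I interpolate $b\ll b_1\ll a$ with $b_1\in\BBBB$ by (BC1), exploit the $\ll$-preservation of $f$ on bounded arguments (a consequence of CLC3 combined with Lemma~\ref{pf1}(a)) to obtain $f(b)\ll f(b_1)$, and then apply (BC1) once more in $A_2$ to this non-tangential inclusion to produce a bounded interpolant $d$ with $f(b)\ll d\ll f(b_1)$. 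Monotonicity of $g$ and the definition of $g\cuk$ then deliver $g(f(b))\le g(d)\le g\cuk(f(b_1))=(g\cuk\circ f)(b_1)\le(g\cuk\circ f)\cuk(a)$.

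The main technical hurdle is the second application of (BC1) in (a): it presupposes that $f(b)$ itself be a bounded element of $A_2$, a property that is not automatic for a $\CLCA$-morphism $f$ on a bounded input. Making this step watertight will require either an additional preservation property of $f$ on the ideal of bounded elements (available in the specific compositional contexts in which the lemma is later applied to establish the associativity of $\diamond$), or a more indirect decomposition argument based on (BC3), writing $f(b)=\bigvee\{e\in\BBBB\st e\ll f(b)\}$ and tracking $g$-images with care; this is the most delicate step and the one farthest removed from routine calculation.
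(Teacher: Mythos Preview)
The paper does not actually prove this lemma; it is quoted from \cite{D-AMH1-10} without argument, so there is no proof here to compare against.

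Your argument for (b) is correct and is the natural one.

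Your misgivings about (a) are fully justified. As the lemma is literally phrased in this paper---for \emph{arbitrary} functions $f,g$ between $\CLCA$-objects---part (a) is false. A minimal counterexample: let $A_1=A_2=A_3$ be the four-element Boolean algebra $\{0,a,a^*,1\}$ with $\ll={\le}$ and $\BBBB=A$; take $f$ to be the constant function with value $1$ and define $g$ by $g(a)=1$ and $g(0)=g(a^*)=g(1)=0$. Then $(g\circ f)\cuk(1)=g(1)=0$, whereas $g\cuk(1)=1$ and hence $(g\cuk\circ f)\cuk(1)=1$. Even requiring $g$ monotone and $f$ a Boolean homomorphism is not enough: with $A_1=\{0,a,a^*,1\}$ (again $\ll={\le}$), $A_2=\RC([0,1])$, the Boolean homomorphism $f$ determined by $f(a)=[0,\tfrac12]$, and the monotone map $g(F)=1$ iff $[0,\tfrac12]\subseteq F$, one computes $(g\circ f)\cuk(a)=1$ but $g\cuk([0,\tfrac12])=0$, so $(g\cuk\circ f)\cuk(a)=0$. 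The second example shows that monotonicity alone does not save the $\ge$-direction; what fails is precisely that $f$ does not preserve $\ll$ (here $a\ll a$ in $A_1$ but $[0,\tfrac12]\not\ll[0,\tfrac12]$ in $A_2$).

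The extra hypotheses you isolate---monotonicity of $g$ for the direction $(g\cuk\circ f)\cuk\le(g\circ f)\cuk$, and preservation by $f$ of both $\ll$ and boundedness on $\BBBB_1$ for the reverse direction---are exactly what is needed, and under them your argument goes through cleanly (the second application of (BC1) is then legitimate). These hypotheses are in force in every invocation of (a) in the present paper: in the proof of Proposition~\ref{V} the maps are $\DBoo$-morphisms, hence Boolean and $\ll$-preserving, and in the proof of Theorem~\ref{newlcdth} the maps involved are $\CLCA$-isomorphisms or composites of such with a $\DBoo$-morphism. So the gap you flagged is real at the stated level of generality, but harmless for the paper's applications; the statement as printed should be read with the tacit understanding that the functions enjoy these preservation properties.
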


In the proof of the next theorem we will use the following standard notation:  $\uparrow\! M\df\{a\in A\st m\le a\ \mbox{ for some }\ m\in M\}$, where  $(A,\le)$ is a poset and $M\sbe A$.

\begin{theorem}\label{lccont}{\rm (The Duality Theorem of  {\rm \cite{D-AMH1-10}} for the category $\HLC$)}
$\rcl$ and $\bclust$ (of the proof of Theorem \ref{roeperl}) are the object assignments of contravariant functors that make
the categories $\HLC$ and\/ $\DHLC$ dually equi\-valent.
\end{theorem}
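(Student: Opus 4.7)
The plan is to promote the object assignments $\rcl$ and $\bclust$ to contravariant functors and deduce the duality from Roeper's Representation Theorem (Theorem \ref{roeperl}), using the Roeper maps $\sigma_L$ and $\tau_A$ as the counit and unit of the equivalence (both already known to be isomorphisms at the object level).

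For the morphism assignments: given a continuous map $f:L_1\to L_2$ in $\HLC$, set $\rcl(f)(G)=\cl_{L_1}(f^{-1}(\int_{L_2}G))$ for $G\in \RC(L_2)$. Axioms (CLC1) and (CLC2) are immediate from the properties of $\cl$ and $\int$. For (CLC3), one uses continuity of $f$ together with the characterization $F\ll G\iff F\sbe \int G$. Axiom (CLC4) rests on local compactness of $L_2$ and the fact that $f$ sends compact sets to compact sets (so every compact $K\sbe L_1$ fits inside $\rcl(f)(a)$ for some compact regular closed $a\sbe L_2$). Finally, (CLC5) follows from local compactness of $L_1$, which guarantees that every regular closed set is the supremum in $\RC(L_1)$ of its compact regular closed subsets. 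Dually, for a $\DHLC$-morphism $\p:A_1\to A_2$, I would define $\bclust(\p):\bclust(A_2)\to \bclust(A_1)$ on a bounded cluster $\mathfrak c$ by extracting from $\mathfrak c$ those $a\in A_1$ whose bounded $\ll$-predecessors are sent into $\mathfrak c$ by $\p$. Using the characterization of bounded clusters in terms of the contained bounded ultrafilters (Facts \ref{deVriesl}, applied in $(A_1,\smf_{\rm\! Al})$) together with axioms (CLC3)--(CLC5), one verifies that the resulting set is indeed a bounded cluster of $A_1$, and the continuity of $\bclust(\p)$ follows from the identification $\bclust(\p)^{-1}(\tau_{A_1}(a))=\bigcap\{\tau_{A_2}(\p(b))\st b\in \BBBB_1,\ b\ll a\}$.

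The main technical obstacle is twofold. First, functoriality of $\rcl$ with respect to the $\diamond$-composition: the naive composite $\rcl(f)\circ \rcl(g)$ need not equal $\rcl(g\circ f)$, but the two are forced to agree after the $\cuk$-closure, so that $\rcl(g\circ f)=\rcl(f)\diamond \rcl(g)$. This identity is verified by combining axiom (CLC5) with Lemma \ref{assocuk}, and a similar argument handles functoriality of $\bclust$. Second, the naturality of $\tau_A$, namely the commutativity in $\DHLC$ of
$$\xymatrix{A_1\ar[r]^-{\tau_{A_1}}\ar[d]_{\p}& \rcl(\bclust(A_1))\ar[d]^{\rcl(\bclust(\p))}\\A_2\ar[r]_-{\tau_{A_2}}& \rcl(\bclust(A_2))}$$
for every $\DHLC$-morphism $\p$. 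Via (CLC5) this reduces to comparing the images of the basic regular closed sets $\tau_{A_1}(b)$, for bounded $b\ll a$, under the two sides of the square, and is the technically most delicate step.

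Naturality of $\sigma_L$ is comparatively direct: a point $x\in L_1$ generates the cluster $\{F\in \RC(L_1)\st x\in F\}$, which is mapped by $\bclust(\rcl(f))$ to $\{G\in \RC(L_2)\st f(x)\in G\}=\sigma_{L_2}(f(x))$. Since both $\sigma_L$ and $\tau_A$ are bijective by Theorem \ref{roeperl}, assembling these pieces yields the dual equivalence $\HLC\simeq \DHLC^{\rm op}$.
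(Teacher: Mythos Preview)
Your overall strategy matches the paper's sketch: define the morphism parts of $\rcl$ and $\bclust$, check functoriality against the $\diamond$-composition via Lemma~\ref{assocuk}, and use $\sigma$ and $\tau$ as unit and counit. Your formula for $\rcl(f)$ and your treatment of (CLC1)--(CLC5) and of the naturality of $\sigma$ are fine.

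The gap is in your definition of $\bclust(\p)$. You describe it as ``those $a\in A_1$ whose bounded $\ll$-predecessors are sent into $\mathfrak c$ by $\p$'', i.e.\ $\{a\in A_1:\forall b\in\BBBB_1,\ b\ll a\Rightarrow \p(b)\in\mathfrak c\}$. This set is empty: $0\in\BBBB_1$, $0\ll a$ always holds, and $\p(0)=0\notin\mathfrak c$ since clusters never contain $0$. Your continuity identity $\bclust(\p)^{-1}(\tau_{A_1}(a))=\bigcap\{\tau_{A_2}(\p(b)):b\in\BBBB_1,\ b\ll a\}$ inherits the same defect (the term $b=0$ forces the intersection to be empty). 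The paper's definition goes the other way: one takes the upward closure of those \emph{bounded} $a\in\BBBB_1$ all of whose $\ll$-\emph{successors} $b\in A_1$ satisfy $\p(b)\in\mathfrak c'$,
\[
\bclust(\p)(\mathfrak c')=\;\uparrow\!\{a\in\BBBB_1:\forall b\in A_1,\ a\ll b\Rightarrow \p(b)\in\mathfrak c'\}.
\]
With this, continuity is obtained dually via preimages of basic \emph{open} sets: $\bclust(\p)^{-1}(\int(\tau_{A_1}(a)))=\bigcup\{\int(\tau_{A_2}(\p(b))):b\in\BBBB_1,\ b\ll a\}$, where the $b=0$ term contributes $\emptyset$ harmlessly. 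Once you correct the direction of the $\ll$-relation and add the upward closure, the rest of your outline goes through.
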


\noindent{\it Proof (Sketch).} Defining $\rcl$ on a morphism $f:X\to Y$ in $\HLC$, one puts
$$ \rcl(f)(G)=\cl(f\inv(\int(G))),$$
for every
$G\in \RC(Y)$, and for $\p:A\to A'$ in $\CLCA(A,A\ap)$ one defines $\bclust(\p)$ by
$$ \bclust(\p)({\mathfrak c}\ap)=\, \uparrow\!\{a\in \BBBB\st \mbox{for all } b\in A
\mbox{ with } a\ll b:\; \p(b)\in{\mathfrak c}\ap\}$$
for
all ${\mathfrak c}\ap\in\bclust(A')$.
The natural isomorphisms
$$\s:{\sf Id}_{\,\HLC}\lra\bclust\circ\rcl \mbox{ and  }\tau: {\sf Id}_{\,\DHLC}\lra\rcl\circ\bclust$$
(as defined in the proof of Theorem \ref{roeperl}) constitute respectively the unit and counit of the dual adjunction $\rcl\dashv\bclust:{\bf CLCA}
\lra{\bf LKHaus}$.
\sqs

Next we recall some useful assertions about (bounded) clusters in local contact algebras.
Here, for an ultrafilter
 ${\mathfrak u}$ in a local contact algebra $A$,
 we denote by\/ ${\mathfrak c}_{\mathfrak u}$ the cluster in the normal contact algebra $(A,\smf_{\!\rm Al})$ generated by\/ $\ult$ (see {\rm Facts \ref{deVriesl}} and {\rm Lemma \ref{Alexprn1}}).

\begin{pro}\label{bbclf}{\rm \cite{D-AMH1-10}}
 If, for an ultrafilter \/ ${\mathfrak u}$ in a local contact algebra $A$,
the cluster ${\mathfrak c}_{\mathfrak u}$
contains a bounded element, then so does $\mathfrak u$ itself.
\end{pro}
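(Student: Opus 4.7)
The plan is to produce a bounded element of $\mathfrak{u}$ by starting from the given bounded element $a\in\mathfrak{c}_{\mathfrak{u}}\cap\BBBB$ and thickening it slightly via the interpolation axiom (BC1). First, I would unravel what the hypothesis says. By the definition of cluster in the normal contact algebra $(A,\smf_{\rm\! Al})$, the condition $a\in\mathfrak{c}_{\mathfrak{u}}$ amounts to $a\,\smf_{\rm\! Al}\, b$ for every $b\in\mathfrak{u}$; since $a\in\BBBB$, the ``both elements outside $\BBBB$'' clause in the definition of $\smf_{\rm\! Al}$ cannot occur, and so in fact $a\smf b$ for every $b\in\mathfrak{u}$.

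Next I would exhibit the desired bounded element of $\mathfrak{u}$. Since no cluster contains $0$ (by C2 applied to a hypothetical $0\smf 0$), the element $a$ is positive, and then C2 gives $a\not\smf 0$, i.e.\ $a\ll 1$. As $a$ is bounded, axiom (BC1) yields a bounded $b$ with $a\ll b\ll 1$; in particular $a\ll b$, which is precisely $a\not\smf b^*$. Suppose now, for contradiction, that $b\notin\mathfrak{u}$: then $b^*\in\mathfrak{u}$ since $\mathfrak{u}$ is an ultrafilter, and the conclusion of the first step applied to $b^*\in\mathfrak{u}$ forces $a\smf b^*$, contradicting what we just derived. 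Hence $b\in\mathfrak{u}\cap\BBBB$, as required.

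I do not anticipate any serious obstacle; the whole argument is short and uses only the definition of $\smf_{\rm\! Al}$ together with axiom (BC1), which was tailor-made for this kind of step, namely for thickening a bounded element inside a non-tangential inclusion while keeping it bounded. The only mildly subtle point worth flagging is the observation at the outset that $a\in\BBBB$ collapses the Alexandroff extension $\smf_{\rm\! Al}$ back to the original contact relation $\smf$ in every instance that involves $a$, which is what lets us pass from the cluster condition in $(A,\smf_{\rm\! Al})$ to a statement about $\smf$ itself.
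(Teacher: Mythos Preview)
Your argument is correct. The paper itself does not prove this proposition but merely cites it from \cite{D-AMH1-10}; however, the exact manoeuvre you use---apply (BC1) to a bounded element $a\ll 1$ to obtain a bounded $b$ with $a\ll b$, then derive a contradiction from $b^*\in\mathfrak{u}$ via $a\smf b^*$---is precisely the technique the paper employs in closely related passages (see the second case in the proof of Proposition~\ref{eqrel}(d) and the argument for $\ultw'\in Z$ in the proof of Proposition~\ref{Ul equivalencel}), so your approach is entirely in line with the paper's methods.
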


\begin{pro}\label{bbcl1}{\rm \cite{D-AMH1-10}}
Let\/ ${\mathfrak c}$ be a bounded cluster in a local contact algebra $A$. Then:

\smallskip

\noindent{\rm (a)} For every $a\in{\mathfrak c}$ there exists a bounded element $c\in{\mathfrak c}$ with $c\le a$;

\noindent{\rm (b)} an element $a\in A$ lies outside $\mathfrak{c}$ iff there is a bounded element $b\in \mathfrak{c}$
with  $b\ll a^*$.
\end{pro}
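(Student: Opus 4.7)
The plan is to reduce both parts to the cluster--ultrafilter correspondence in the normal contact algebra $(A,\smf_{\!\rm Al})$ together with the observation that, on bounded elements, the relations $\smf$ and $\smf_{\!\rm Al}$ agree. Recall from Facts \ref{deVriesl} that the cluster $\mathfrak c$ is of the form $\mathfrak c_{\ult}$ for a witnessing ultrafilter $\ult$, and that one may choose $\ult$ to contain any prescribed element of $\mathfrak c$. I will also use that $\ult\sbe\mathfrak c_{\ult}$: if $a\in\ult$ and $b\in\ult$, then $a\we b\in\ult$ is nonzero, hence $(a\we b)\smf(a\we b)$ by C1, whence $a\smf b$ by the monotonicity property of $\smf$ (i.e., I3), so $a\smf_{\!\rm Al}b$ and therefore $a\in\mathfrak c_{\ult}$.

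\emph{Proof of (a).} Given $a\in\mathfrak c$, pick an ultrafilter $\ult$ with $a\in\ult$ and $\mathfrak c_{\ult}=\mathfrak c$. Since $\mathfrak c$ is bounded, Proposition \ref{bbclf} yields a bounded element $b_{0}\in\ult\cap\BBBB$. Set $c\df a\we b_{0}$. Then $c\in\ult\sbe\mathfrak c$, $c\le a$, and $c\in\BBBB$ because $\BBBB$ is an ideal. This produces the desired bounded element of $\mathfrak c$ below $a$.

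\emph{Proof of (b), $(\Ra)$.} Suppose $a\nin\mathfrak c$. Since $\mathfrak c$ is a cluster in $(A,\smf_{\!\rm Al})$, the contrapositive of (cl3) provides some $x\in\mathfrak c$ with $a\;\not\smf_{\!\rm Al}\;x$; in particular, $a\nsmf x$, which by the definition of $\ll$ gives $a\ll x^{*}$. By part (a), there exists a bounded $c\in\mathfrak c$ with $c\le x$, hence $x^{*}\le c^{*}$, and applying I3 to $a\ll x^{*}\le c^{*}$ yields $a\ll c^{*}$. The axiom I7 then gives $c\ll a^{*}$, so $b\df c$ has the required properties.

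\emph{Proof of (b), $(\La)$.} Assume $b\in\mathfrak c\cap\BBBB$ with $b\ll a^{*}$, i.e., $b\nsmf a$. Since $b$ is bounded, the definition of the Alexandroff extension forces $b\nsmf_{\!\rm Al} a$. If $a$ were in $\mathfrak c$, then (cl1) applied to $b,a\in\mathfrak c$ would give $b\smf_{\!\rm Al} a$, a contradiction; hence $a\nin\mathfrak c$.

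The argument is essentially bookkeeping; the only mild subtlety, which I would flag at the outset, is the passage between $\smf$ and $\smf_{\!\rm Al}$ when selecting and using the bounded witness $b$: this is the reason (a) is invoked in the course of proving (b), so that the element $x$ produced from (cl3) can be replaced by the bounded element $c\le x$ in $\mathfrak c$ before applying the complementation rule I7.
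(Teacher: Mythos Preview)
The paper does not actually prove this proposition; it merely cites \cite{D-AMH1-10}. Your argument is mathematically sound and is essentially the natural proof: use the ultrafilter description of clusters together with Proposition~\ref{bbclf} for part~(a), and the defining property (cl\,4) of a cluster in $(A,\smf_{\!\rm Al})$ together with part~(a) for part~(b).

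The only issues are labelling errors. The axiom you invoke as ``I3'' for monotonicity of $\ll$ is I\,2 in the paper; the complementation rule you call ``I7'' is I\,3 (there is no I\,7); and the cluster conditions you cite as (cl3) and (cl1) are displayed as (cl\,4) and (cl\,2), respectively. Fix these references and the proof is fine as written.
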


\begin{facts}\label{projectivel}
\rm
Recall  that if $X$ is a regular Hausdorff space then a space $EX$ is called an
{\em absolute of}\/ $X$ if there exists a perfect irreducible map $\pi_X : EX \to X$
and every perfect irreducible preimage of $EX$ is homeomorphic to EX; the map $\pi_X$ is then said to be a {\em projective cover of}\/ $X$. It is well-known that  the absolute is unique up to
homeomorphism, that every regular Hausdorff space has an absolute, and that a space $Y$ is an absolute of a regular Hausdorff space $X$ if, and only if, $Y$ is
an extremally disconnected Tychonoff space for which there exists a perfect
irreducible map $p_X : Y \to X$ (see, for example, \cite{ArP,Pon,Pon1,Pon2,Pon3,PS,PW}).

 It is well known
(see, for example, \cite{Wa})
that if $X$ is a compact Hausdorff space then $EX$ may be constructed as the Stone dual of the complete Boolean algebra ${\sf RC}(X)$, with $\pi_X$ the map
$EX={\sf Ult}({\sf RC}(X))\to X$
that assigns to an ultrafilter $\mathfrak u$ of regular closed sets of $X$ the only point in $\bigcap\mathfrak u$.
 Further, the absolute $EX$ of a Tychonoff space $X$ with its map $\pi_X:EX\lra X$ can be constructed by pulling the map $\pi_{\beta X}$ of the absolute $E(\beta X)$ of the Stone-\v{C}ech compactification $\b X$ of $X$ along the embedding $X\lra\beta X$.
 If $X$ is a locally compact Hausdorff space, then, as one easily sees, in the above construction of the absolute of $X$ one may replace $\b X$ by $\a X$ -- the Alexandroff (one-point) compactification of $X$.

For the definition and properties of absolutes of arbitrary topological spaces,  see the survey paper \cite{PS}. A modern and very efficient presentation appeared in \cite{Rump}.

We will  make essential use of {\em Alexandroff's Theorem} \cite[p. 346]{Alex}
(see also \cite[Theorem (d) (3), p. 455]{PW}),
which one derives easily from Ponomarev's results \cite{Pon1} on irreducible maps:
{\em For $p:X\longrightarrow Y$ a closed irreducible map of topological spaces, the map
  $$\rho_p:\operatorname{RC}(X)\longrightarrow \operatorname{RC}(Y),\ \ H\mapsto p(H),$$
  is a Boolean isomorphism, with $\rho_p^{-1}(K)=\mbox{{\rm cl}}_X(p^{-1}(\mbox{{\rm int}}_Y(K)))$, for all $K\in\operatorname{RC}(Y)$.}

  We will also use the well-known fact
(see, e.g., \cite{CNG},
p.271, and, for a proof, \cite{VDDB})
that, {\em when $Y$ is a dense subspace of a topological space $X$, then one has the Boolean isomorphisms $r$ and $e$ that are inverse to each other:}
 $$r:{\rm RC}(X)\longrightarrow {\rm RC}(Y),\  F\mapsto F\cap Y,\qquad{\rm and}\qquad e:{\rm RC}(Y)\longrightarrow{\rm RC}(X),\  G\mapsto \mbox{{\rm cl}}_X(G).$$
 \end{facts}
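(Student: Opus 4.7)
The final block collects two assertions we need to justify: Alexandroff's theorem, that for a closed irreducible map $p:X\to Y$ the rule $H\mapsto p(H)$ defines a Boolean isomorphism $\rho_p:\RC(X)\to\RC(Y)$ with inverse $K\mapsto\cl_X(p\inv(\int_Y(K)))$; and the fact that restriction $r:F\mapsto F\cap Y$ and closure $e:G\mapsto\cl_X(G)$ give mutually inverse Boolean isomorphisms $\RC(X)\cong\RC(Y)$ whenever $Y$ is dense in $X$. The plan is to treat the two claims separately, each by first checking well-definedness (membership in $\RC$), then bijectivity via an explicit inverse, and finally compatibility with the Boolean operations.

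For Alexandroff's theorem I would first verify that $\rho_p$ lands in $\RC(Y)$. Since $p$ is closed, $p(H)$ is closed in $Y$; I need $p(H)\sbe\cl_Y(\int_Y p(H))$. Write $U=\int_X(H)$ so that $H=\cl_XU$, and exploit irreducibility in the form: for every nonempty open $U\sbe X$, the set $V_U:=Y\stm p(X\stm U)$ is open (because $p$ is closed), nonempty (by irreducibility), and satisfies $V_U\sbe p(U)\sbe p(H)$; so $V_U\sbe\int_Y p(H)$ and taking closures yields $p(H)=p(\cl_XU)=\cl_Yp(U)\sbe\cl_YV_U\sbe\cl_Y\int_Yp(H)$, using that $p$ is closed with dense image from $U$. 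Next, I would verify that $\sigma:K\mapsto\cl_X(p\inv(\int_YK))$ maps $\RC(Y)\to\RC(X)$ and is inverse to $\rho_p$: the composite $\rho_p\sigma(K)=p(\cl_Xp\inv(\int_YK))=\cl_Yp(p\inv(\int_YK))=\cl_Y\int_YK=K$, where the middle equality uses that $p$ is closed and surjective (hence $p(p\inv(W))=W$ for $W$ open, using $p$ surjective), and the analogous argument shows $\sigma\rho_p=\Id$. Boolean preservation then follows: meets correspond to regular-closed intersections, which under $\rho_p$ translate through $\int$ and $\cl$ compatibilities, while the formula for the complement in $\RC$ combined with closedness and irreducibility of $p$ yields $\rho_p(H^*)=(\rho_p(H))^*$.

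For the dense-subspace claim, I would invoke the standard identity $\cl_Y(U\cap Y)=\cl_XU\cap Y$ for every open $U\sbe X$ (which follows from $Y$ being dense). Given $F\in\RC(X)$ with $U=\int_XF$, this yields $r(F)\cap Y=\cl_XU\cap Y=\cl_Y(U\cap Y)$, and since $U\cap Y$ is open in $Y$, to conclude $r(F)\in\RC(Y)$ I still need $U\cap Y\sbe\int_Yr(F)$, which is immediate. Conversely, for $G\in\RC(Y)$ with $V=\int_YG$, one has $e(G)=\cl_XG=\cl_XV$, so $e(G)\in\RC(X)$. The compositions $r\circ e$ and $e\circ r$ are then computed via the same density identity: $r(e(G))=\cl_XV\cap Y=\cl_YV=G$, and $e(r(F))=\cl_X(\cl_XU\cap Y)=\cl_XU=F$ using that $U\cap Y$ is dense in $U$. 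Boolean compatibility of $r$ and $e$ is routine once one recalls that the join in $\RC$ is $\cl\int(\cup)$ and the meet is the regularization of $\cap$.

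The step I expect to be delicate is the irreducibility argument inside Alexandroff's theorem, specifically checking $V_U=Y\stm p(X\stm U)\nes$ and $V_U\sbe p(U)$, together with the passage $\cl_Yp(U)=p(\cl_XU)$, since here one must carefully combine closedness, surjectivity, and irreducibility of $p$; the rest of the work is essentially bookkeeping with the $\int/\cl$-calculus in regular-closed algebras. Once these identities are in hand, both isomorphisms and their stated inverses fall out directly.
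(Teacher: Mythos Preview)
The paper does not prove Facts~\ref{projectivel}; it records them as well-known preliminaries with references (\cite{Alex,Pon1,PW} for Alexandroff's theorem, \cite{CNG,VDDB} for the dense-subspace isomorphism). Your sketch therefore goes beyond what the paper itself supplies, and the outline you give is the standard one.

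That said, two steps in your treatment of Alexandroff's theorem are genuinely incomplete. First, the inclusion $\cl_Y p(U)\sbe\cl_Y V_U$ (equivalently, $V_U$ is dense in $p(U)$) is asserted with the phrase ``dense image from $U$'' but needs an argument: for $y\in p(U)$ and open $W\ni y$, the set $U'=U\cap p^{-1}(W)$ is nonempty open, irreducibility gives $V_{U'}\neq\emptyset$, and $V_{U'}\sbe V_U\cap W$. Second, ``the analogous argument shows $\sigma\rho_p=\Id$'' is not analogous. To obtain $p^{-1}(\int_Y p(H))\sbe H$ you must again invoke irreducibility: if $x\notin H$ but $p(x)\in W\sbe p(H)$ with $W$ open, then $O=p^{-1}(W)\cap(X\stm H)$ is nonempty open, so $V_O\neq\emptyset$; yet every point of $V_O$ has all its preimages in $O\sbe X\stm H$, whence $V_O\cap p(H)=\emptyset$, contradicting $V_O\sbe W\sbe p(H)$. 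The reverse inclusion $H\sbe\cl_X p^{-1}(\int_Y p(H))$ likewise needs the $V_{U'}$-trick. None of this is hard, but it is not the same computation as $\rho_p\sigma=\Id$, which used only surjectivity and closedness.

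Your dense-subspace argument is correct apart from the slip ``$r(F)\cap Y$'' where you mean $r(F)=F\cap Y$.
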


\section{A new duality theorem for the category $\HLC$}\label{s4}

\begin{nist}\label{P1-3}
\rm
Given a dual equivalence
$\xymatrix{{\AA}^{\rm op}\ar@/^0.6pc/[rr]^{T} & {\scriptstyle \ep\;\simeq\;\eta} & {\XX}\ar@/^0.6pc/[ll]^{S}}$
and an embedding $J$ of $\XX$ as a full subcategory of a category $\YY$,
the general categorical extension theorem  of \cite{DDT1} provides a {\em natural construction}\/ for a category $\BB$ into which $\AA$ may be fully embedded via $I$, and which allows for an extension along $I$ and $J$ of the dual equivalence between $\AA$ and $\XX$ to a dual equivalence between  $\BB$ and $\YY$. The construction depends on a given ${\cal X}$-{\em covering class} $\PP$ of morphisms in ${\cal Y}$, defined to
 satisfy the following conditions:
\begin{enumerate}
\renewcommand{\theenumi}{P\arabic{enumi}}
\item $1_X\in{\cal P}$ for every $\XX$-object $X$;
\item every ${\cal X}$-object $X$ is ${\cal P}$-projective in ${\cal Y}$, that is:\\ $\forall\;(p:Y\to Y')\in{\cal P},\,f:X\to Y'\ \exists\;g:X\to Y:\,p\circ g=f;$
\item ${\cal Y}$ has enough ${\cal P}$-projectives in ${\cal X}$, that is:\\
$\forall \,Y\in {\cal Y}\;\exists\,(p:X\to Y)\in{\cal P}\ :\ X\in {\cal X}.$
\end{enumerate}
Without loss of generality we may assume
that ${\cal P}$ be closed under pre-composition with isomorphisms in ${\cal X}$ and under post-composition with isomorphisms in $\mathcal Y$.
\end{nist}

\begin{constr}\label{const}{\rm \cite{DDT1}}
\rm
For any functor $T:{\mathcal A}^{\rm op}\to\mathcal X $ and an ${\cal X}$-covering class ${\cal P}$ in ${\cal Y}$, we form the (comma) category
${\sf C}({\cal A},{\cal P},{\cal X}),$
defined as follows:

\begin{itemize}
\item objects are pairs $(A,p)$ with $A$ in ${\cal A}$ and $p:TA\longrightarrow Y$ in the class ${\cal P}$;

\item  morphisms $(\varphi,f):(A,p)\longrightarrow(A',p\ap)$ in ${\sf C}({\cal A},{\cal P},{\cal X})$ are given by
morphisms $\varphi:A\longrightarrow A'$ in ${\cal A}$ and  $f:Y'\longrightarrow Y$ in ${\cal Y}$,
such that  $p\circ T\varphi=f\circ p\ap$:
\begin{center}
$\xymatrix{TA\ar[d]_{p} & TA'\ar[l]_{T\varphi}\ar[d]^{p\ap}\\
            Y & Y'\ar[l]^{f}}$
            \end{center}

\item    composition is as in ${\cal A}$ and ${\cal Y}$, so that $(\varphi,f)$ as above gets composed with
$(\varphi',f'):(A',p\ap)\longrightarrow(A'',p\ap)$ by the horizontal pasting of diagrams, that is,
$$(\varphi',f')\circ(\varphi,f)= (\varphi'\circ\varphi,f\circ f').$$

\item $(1_A,1_{{\rm cod}(p)})$ is
the identity morphism
of the object $(A,p)$ in ${\sf C}({\cal A},{\cal P},{\cal X})$.
\end{itemize}

\noindent On the hom-sets of ${\sf C}({\cal A},{\cal P},{\cal X})$
one defines a compatible equivalence relation by
$$(\varphi,f)\sim(\psi,g)\Longleftrightarrow f=g,$$
for all $(\varphi,f),(\psi,g):(A,p)\longrightarrow(A',p\ap).$
We let ${\cal B}$ be the quotient category
$${\cal B}={\sf C}({\cal A},{\cal P},{\cal X})/\!\sim,$$
{\em i.e.,} ${\cal B}$ has the same objects as ${\sf C}({\cal A},{\cal P},{\cal X})$, morphisms in $\cal B$ are the equivalence classes $[\p,f]$ of morphisms $(\p,f)$ in ${\sf C}({\cal A},{\cal P},{\cal X})$, and their composition proceeds by the composition of representatives.

\noindent One has the functor $I:{\cal A}\longrightarrow{\cal B}$, defined by
$$(\varphi:A\longrightarrow A')\mapsto (\;I\varphi=[\varphi,T\varphi]:(A, 1_{TA})\longrightarrow(A',1_{TA'})\;),$$
which, when $T$ is faithful, is a full embedding by P1, so that $\mathcal A$ may be considered as a full subcategory of $\mathcal B$. When $T$ is also full, by P2, morphisms $[\varphi,f]$ in $\mathcal B$ are fully determined by $f$, and when $T$ is a dual equivalence, as it will be the case henceforth, $\mathcal B$ becomes dually equivalent to $\mathcal Y$ by P3.
\end{constr}

\begin{theorem}\label{more duality}{\rm \cite{DDT1}}
Let ${\cal X}$ be a full subcategory of\/   ${\cal Y}$ and ${\cal P}$ an ${\cal X}$-covering class in ${\cal Y}$.
Then any dual equivalence $\xymatrix{{{\cal A}}^{\rm op}\ar@/^0.6pc/[rr]^{T} & {\scriptstyle \varepsilon\;\simeq\;\eta} & {{\cal X}}\ar@/^0.6pc/[ll]^{S}}$ can be extended to a dual equivalence $\xymatrix{{{\cal B}}^{\rm op}\ar@/^0.6pc/[rr]^{\tilde{T}} & {\scriptstyle \tilde{\varepsilon}\;\simeq\;\tilde{\eta}} & {{\cal Y}}\ar@/^0.6pc/[ll]^{\tilde{S}}}$ along $I$ and $J$, with $J:{\cal X}\hookrightarrow {\cal Y}$ the inclusion functor and $I$
the full embedding ${\cal A}\to{\cal B}:={\sf C}({\cal A},{\cal P},{\cal X})/\!\sim$,
being defined as above. The lifted dual equivalence may be chosen to satisfy $$\tilde{T}\tilde{S}={\rm Id}_{{\cal Y}},\quad
\tilde{\eta}=1_{{\rm Id}_{{\cal Y}}},\quad \tilde{T}\tilde{\varepsilon}=1_{\tilde{T}},\quad \tilde{\varepsilon}\tilde{S}=1_{\tilde{S}},$$
and the canonical isomorphism $\gamma:IS\to\tilde{S}J$ then satisfies  $\tilde{T}\gamma=J\eta$ and $\gamma T\circ I\varepsilon=\tilde{\varepsilon} I$.
\begin{center}
$\xymatrix{{\cal Y}\ar[r]_{\tilde{S}}\ar@/^1.0pc/[rr]^{\rm Id_{{\cal Y}}}\ar@{}[rd]|{\cong} & {\cal B}^{\rm op}\ar[r]_{\tilde{T}} & {\cal Y}\\
            {\cal X}\ar[r]_{S}\ar[u]^{J} & {\cal A}^{\rm op}\ar[r]_{T}\ar[u]_{I} & {\cal X}\ar[u]_{J}}$
            \end{center}
\end{theorem}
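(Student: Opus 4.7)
The plan is to build $\tilde T$ and $\tilde S$ explicitly, arranged so that $\tilde T\tilde S={\rm Id}_{\cal Y}$ on the nose (whence $\tilde\eta=1_{{\rm Id}_{\cal Y}}$), with the counit $\tilde\varepsilon$ assembled from P2-lifts; the equivalence relation $\sim$ on the hom-sets of ${\sf C}({\cal A},{\cal P},{\cal X})$ will precisely absorb the arbitrariness in all choices of lifts and covers, so that functoriality, naturality and the prescribed identities follow automatically. Concretely, I would define $\tilde T:{\cal B}^{\rm op}\to{\cal Y}$ by projection onto the $\cal Y$-coordinate, $\tilde T(A,p)={\rm cod}(p)$ and $\tilde T[\varphi,f]=f$; this is well defined on $\sim$-classes and functorial by the composition law of ${\sf C}({\cal A},{\cal P},{\cal X})$.

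For $\tilde S$, invoke P3 to choose for each $Y\in{\cal Y}$ a cover $p_Y:X_Y\to Y$ in ${\cal P}$ with $X_Y\in{\cal X}$, making the canonical choice $p_Y=1_Y$ permitted by P1 whenever $Y\in{\cal X}$. The object $\tilde S(Y)$ is then built from $p_Y$ and the isomorphism $\eta_{X_Y}$ of the original equivalence, as a $\cal B$-object whose second component lies in ${\cal P}$ by closure of ${\cal P}$ under pre-composition with ${\cal X}$-isomorphisms. For $g:Y\to Y'$ in $\cal Y$, P2 applied to the $\cal P$-projective $X_Y$ against $g\circ p_Y:X_Y\to Y'$ produces a lift $h:X_Y\to X_{Y'}$ with $p_{Y'}\circ h=g\circ p_Y$; set $\tilde S(g)=[Sh,g]$ in ${\cal B}^{\rm op}$. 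Naturality of $\eta$ makes $(Sh,g)$ a legitimate ${\sf C}({\cal A},{\cal P},{\cal X})$-morphism, and the $\sim$-equivalence absorbs the choice of $h$, yielding functoriality of $\tilde S$ and the identity $\tilde T\tilde S={\rm Id}_{\cal Y}$.

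For the counit, given $(A,p)\in\cal B$ with $Y={\rm cod}(p)$, use P2 to obtain $k:X_Y\to TA$ with $p\circ k=p_Y$, and define $\tilde\varepsilon_{(A,p)}$ to be the $\sim$-class whose $\cal Y$-component is $1_Y$ and whose $\cal A$-component is constructed from $k$ via the original natural isomorphisms (essentially $Sk\circ\varepsilon_A$, under the convention $\varepsilon\colon{\rm Id}_{\cal A}\to ST$). A short computation combining the naturality of $\eta$ with the triangle identity $T\varepsilon_A\circ\eta_{TA}=1_{TA}$ confirms the compatibility square. The identities $\tilde T\tilde\varepsilon=1_{\tilde T}$ and $\tilde\varepsilon\tilde S=1_{\tilde S}$ are then built in by the $\cal Y$-component being $1_Y$; for the second, the canonical choice $k=\eta_{X_Y}$ at $(A,p)=\tilde S(Y)$ collapses the $\cal A$-component to $1_{SX_Y}$ via the remaining triangle identity $S\eta_{X_Y}\circ\varepsilon_{SX_Y}=1_{SX_Y}$, while any other choice yields the same $\sim$-class. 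Naturality of $\tilde\varepsilon$ reduces, after whiskering with $\tilde T$, to a triviality in $\cal Y$, and each $\tilde\varepsilon_{(A,p)}$ is an iso because any $\cal B$-morphism $[\varphi,f]$ with $f$ invertible admits an inverse, constructed from $f^{-1}$ by one more application of P2 combined with the full faithfulness of $T$ (the $\cal A$-component is then forced uniquely modulo $\sim$).

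Finally, the compatibility with the original equivalence along $I$ and $J$ is essentially forced by the convention $p_Y=1_Y$ on $\cal X$: $IS(Y)$ and $\tilde SJ(Y)$ share first component $SY$, and a natural isomorphism $\gamma_Y$ with $\cal A$-component $1_{SY}$ and $\cal Y$-component the appropriate instance of $\eta$ at $Y$ yields $\tilde T\gamma=J\eta$ by the very definition of $\tilde T$, while $\gamma T\circ I\varepsilon=\tilde\varepsilon I$ reduces, upon whiskering with $\tilde T$, to one of the triangle identities of the original equivalence, hence holds modulo $\sim$. The principal obstacle throughout is combinatorial bookkeeping: one must systematically verify that every ambiguity arising from a P2-lift or P3-cover is absorbed by $\sim$, and that each compatibility square required by ${\sf C}({\cal A},{\cal P},{\cal X})$ can be filled using the naturality of $\eta$ (and $\varepsilon$) together with the two triangle identities. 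Once this discipline is established, each remaining verification dissolves into a routine diagram chase in $\cal Y$.
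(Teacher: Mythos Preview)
Your proposal is correct and follows essentially the same route as the paper's (sketch of) proof: you define $\tilde T$ as the projection onto the $\mathcal Y$-component, build $\tilde S$ from a P3-chosen cover $p_Y$ (normalized to $1_Y$ on $\mathcal X$) composed with $\eta_{X_Y}^{-1}$, define $\tilde S$ on morphisms via a P2-lift followed by $S$, and construct $\tilde\varepsilon_{(A,p)}$ with $\mathcal Y$-component $1_Y$ and $\mathcal A$-component obtained from a P2-lift through fullness of $T$ --- all exactly as in the paper. Your additional remarks (why the $\sim$-relation absorbs the lift ambiguities, why $\tilde\varepsilon$ is invertible, and how the compatibility with $I,J$ reduces to the triangle identities) merely flesh out details that the paper's sketch leaves implicit.
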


\begin{proof}{\em (Sketch)}
The functor $\tilde{T}$ may be taken to be the projection $[\varphi,f]\mapsto f$. Defining $\tilde{S}$ on objects, with P3  one chooses for every $Y\in |{\cal Y}|$ a morphism $\pi_Y:EY\longrightarrow Y$ in ${\cal P}$, with $\pi_X=1_X$ for all $X\in |{\cal X}|$ (according to P1), and then puts $\tilde{S}Y=(SEY,\pi_Y\circ\eta^{-1}_{EY})$. For a morphism $f:Y'\longrightarrow Y$ in ${\cal Y}$, again, P2 and the fullness of $T$ allow one to choose a morphism $\varphi_f:SEY\longrightarrow SEY'$ in ${\cal A}$ with
$\pi_Y\circ\eta^{-1}_{EY}\circ T\varphi_f=f\circ\pi_{Y'}\circ\eta^{-1}_{EY'}$ and to set
$\tilde{S}f=[\varphi_f,f]$.
For an object $(A,p:TA\longrightarrow Y)$ in $\cal B$ one puts $\tilde{\varepsilon}_{(A,p)}=[\varphi_{(A,p)},1_Y]$, with any ${\cal A}$-morphism $\varphi_{(A,p)}:A\longrightarrow SEY$ satisfying $p\circ T\varphi_{(A,p)}=\pi_Y\circ\eta^{-1}_{EY}$. Then $\tilde{\varepsilon}$ is, like $\tilde{\eta}=1_{{\rm Id}_{{\cal Y}}}$, a natural isomorphism satisfying the claimed identities.
\end{proof}

We recall some definitions and results from \cite{DD} and \cite{DD1} that are needed for our exposition.

\begin{notas}\label{zalgnl}
\rm
For a Boolean algebra $A$, $X\subseteq  \Ult(A)$ and every $a\in A$ we set
$$\ep_A^X(a)\ensuremath{\overset{\mathrm{df}}{=}} X\cap \ep_A(a),$$ thus defining the map $\ep_A^X: A\longrightarrow  {\sf P}(X)$ into the power set of $X$.
Since $\ep_A^X(A)\sbe \co(X)$, where $X$ is regarded as a subspace of $\Ultsf(A)$,  we may restrict this map to obtain the map $$\bar{\ep}_A^X:A\to \co(X).$$
For a topological space $X$
 we put
$\UP_X\df\{\ult_x\st x\in X\}$
(see Facts \ref{Stonel} for the notation $\ult_x$)
and consider the map
$$\up_{X}:X\longrightarrow  \UP_X,\ \ x\mapsto \ult_x,$$
noting that  {\em if $X$ is a zero-dimensional Hausdorff space, then $\up_{X}$ is a bijection.}
\end{notas}

A pair $(A,X)$, with a Boolean algebra $A$
and $X\subseteq \Ult(A)$,
is called a {\em Boolean z-algebra} (briefly, a {\em
z-algebra}) (\cite{DD}),  if for every $0\neq a\in A$ there exists
$\ult\in X$ such that $a\in\ult$. A z-algebra $(A,X)$ is said to be {\em complete} (\cite{DD})   if $A$ is complete as a Boolean algebra.
Obviously, a pair $(A,X)$ is a z-algebra if, and only if,  $A$ is a Boolean algebra and $X$ is
 a dense subset of\/ ${\sf Ult} (A)$.

The category $\ZCB$ (as introduced in \cite{DD}) has as objects complete z-algebras,  and
its morphisms $(\varphi,f):(A,X)\to(A^{\prime},X^{\prime})$
are given by a Boolean homomorphism
$\varphi:A\to A^{\prime}$ and the {\bf Set}-map $f:X^{\prime}\to X$ which is simply a restriction of $\Ultsf(\p)$;
the composition with $(\p\ap,f\ap)$ is defined as follows: $(\varphi^{\prime},f^{\prime})\circ (\varphi,f)=
(\varphi^{\prime}\circ\varphi, f\circ f^{\prime})$.

We may now recall the duality theorem for the category $\EDT$ of extremally disconnected Tychonoff spaces and continuous maps as obtained in \cite{DD}.

\begin{theorem}\label{zdualityedll}{\rm \cite{DD}}
The categories $\EDT$ and $\ZCB$ are dually equivalent.
\end{theorem}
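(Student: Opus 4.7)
The plan is to construct contravariant functors $F\colon\EDT\to\ZCB$ and $G\colon\ZCB\to\EDT$ that form a dual equivalence, by bootstrapping the restricted Stone duality $\mathbf{CBoo}^{\rm op}\simeq\EKH$ recalled in Facts \ref{Stonel}. On objects, set $G(A,X)=X$ with its subspace topology from $\Ult(A)$, and $F(Y)=(\CO(\beta Y),\,\eta_{\beta Y}(Y))$, where $\beta Y$ is the Stone--\v{C}ech compactification of $Y$ and $\eta_{\beta Y}\colon\beta Y\to\Ult(\CO(\beta Y))$ is the Stone homeomorphism. Well-definedness needs two observations: if $(A,X)\in|\ZCB|$ then $\Ult(A)$ is extremally disconnected compact Hausdorff by the restricted Stone duality, so its dense subspace $X$ is extremally disconnected Tychonoff; and if $Y\in|\EDT|$, the classical fact that $\beta Y$ inherits extremal disconnectedness from $Y$ yields both completeness of $\CO(\beta Y)$ and density of $\eta_{\beta Y}(Y)$ in $\Ult(\CO(\beta Y))$. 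On morphisms, set $G(\p,f)=f$ (which is continuous, as a restriction of $\Ultsf(\p)$), and for $g\colon Y\to Y'$ in $\EDT$ set $F(g)=(\CO(\beta g),\,g)$, identifying $g$ as a map between the dense subsets $\eta_{\beta Y}(Y)\to\eta_{\beta Y'}(Y')$; functoriality in both directions is routine, given the reversed-composition convention in $\ZCB$.

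The natural isomorphism $GF\cong\Id_{\EDT}$ is essentially tautological: its $Y$-component is the corestriction of $\eta_{\beta Y}$ to $\eta_{\beta Y}(Y)=GF(Y)$, a homeomorphism of $Y$ onto its image. The crux of the proof is the natural isomorphism $FG\cong\Id_{\ZCB^{\rm op}}$. For each $(A,X)\in|\ZCB|$ one needs a Boolean isomorphism $A\cong\CO(\beta X)$ carrying $X\subseteq\Ult(A)$ to $\eta_{\beta X}(X)\subseteq\Ult(\CO(\beta X))$. Two ingredients combine to deliver this. First, since $X$ is dense in the extremally disconnected compact Hausdorff space $\Ult(A)$, the restriction map $\bar{\ep}_A^X\colon A\to\CO(X)$ defined in Notations \ref{zalgnl} is a Boolean isomorphism: its inverse sends a clopen $V\subseteq X$ to $\cl_{\Ult(A)}(V)$, which is clopen in $\Ult(A)$ by extremal disconnectedness (writing $V=X\cap O$ for some open $O\subseteq\Ult(A)$, one has $\cl_{\Ult(A)}(V)=\cl_{\Ult(A)}(O)$, a closure of an open in an ED space). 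Second, an extremally disconnected Tychonoff space $X$ is strongly zero-dimensional, whence $\beta X$ is canonically the Stone dual of $\CO(X)$, giving a natural isomorphism $\CO(\beta X)\cong\CO(X)$. Composing yields $A\cong\CO(X)\cong\CO(\beta X)$, and chasing the Stone homeomorphism $\Ult(\CO(\beta X))\cong\beta X$ confirms that the subset $X$ inside $\Ult(A)$ is mapped to $\eta_{\beta X}(X)$.

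The main obstacle is the topological lemma that a dense subspace $X$ of an extremally disconnected compact Hausdorff space $K$ satisfies $\beta X\cong K$, equivalently that the trace map $\CO(K)\to\CO(X)$, $U\mapsto U\cap X$, is a Boolean isomorphism; this underlies both ingredients of the preceding paragraph. Once secured, the remaining checks---naturality of both isomorphisms, and the verification that the morphism assignments of $F$ and $G$ are mutually inverse---reduce to routine diagram chases combining the Stone duality with the universal property of $\beta$. An alternative route would be to invoke the general extension procedure of Theorem \ref{more duality} with $\XX=\EKH$, $\YY=\EDT$ and $\PP$ a suitable class of morphisms from ED compact Hausdorff covers, but the direct construction above is more transparent here since every $Y\in|\EDT|$ carries a canonical ``cover'' supplied by the embedding $Y\hookrightarrow\beta Y$.
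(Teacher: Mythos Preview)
Your argument is correct, but it takes a longer detour than the paper's proof. The paper's functor $\sf C$ sends $X\in|\EDT|$ directly to $(\CO(X),\UP_X)$, using only that $\CO(X)$ is complete for extremally disconnected $X$ (closures of opens are clopen) and that the point-ultrafilters $\UP_X$ are obviously dense in $\Ultsf(\CO(X))$. Your functor $F$ instead passes through the Stone--\v{C}ech compactification, sending $Y$ to $(\CO(\beta Y),\eta_{\beta Y}(Y))$; this forces you to invoke that $\beta Y$ is again extremally disconnected and that extremally disconnected Tychonoff spaces are strongly zero-dimensional (so that $\beta Y\cong\Ultsf(\CO(Y))$). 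The two constructions yield naturally isomorphic objects of $\ZCB$, and both ultimately hinge on the same key lemma---that for $(A,X)\in|\ZCB|$ the trace map $\bar{\ep}_A^X:A\to\CO(X)$ is a Boolean isomorphism---which the paper uses for its counit $\ep'_{(A,X)}=(\bar{\ep}_A^X,\ups_X^{-1})$ and which you correctly identify as the ``main obstacle''. What the paper's route buys is economy: no appeal to $\beta$, no strong zero-dimensionality, and the unit $\eta'_X=\ups_X$ is visibly a homeomorphism without further argument. What your route buys is an explicit tie to the restricted Stone duality $\mathbf{CBoo}^{\rm op}\simeq\EKH$, making it transparent that the extension to $\EDT$ amounts to restricting along dense embeddings into the Stone--\v{C}ech compactification.
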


\noindent{\it Proof (Sketch).} The dual equivalence is realized by the functors
$${\sf C}:\EDT\longrightarrow \ZCB^{\rm op}\ \ \mbox{ and }\ \ {\sf U}:\ZCB^{\rm op}\longrightarrow \EDT,$$
where for  every $X\in|\EDT|$,
${\sf C}(X)\ensuremath{\overset{\mathrm{df}}{=}}({\sf CO} (X),\UP_X);$
for  $f\in\EDT(X,Y)$,
 ${\sf C}(f)\ensuremath{\overset{\mathrm{df}}{=}} ({\sf CO} (f),\UP_f),$
 with
$\UP_f:\UP_X\longrightarrow  \UP_Y$  defined by
$\UP_f(\ult_x)\ensuremath{\overset{\mathrm{df}}{=}}\ult_{f(x)}$ for every $x\in X$;
for every $(A,X)\in|\ZCB|$,  ${\sf U}(A,X)\ensuremath{\overset{\mathrm{df}}{=}} X$ (where $X$ is regarded as a subspace of
${\sf Ult} (A)$);
if
$(\varphi,f)$
is a $\ZCB$-morphism, then
${\sf U}(\varphi,f)\ensuremath{\overset{\mathrm{df}}{=}} f.$
Further, we have the natural isomorphism $\ep\ap: \Id_{\ZCB}\lra \CCC\circ \UUU,\ \ (A,X)\mapsto \ep\ap_{(A,X)}$, where $\ep\ap_{(A,X)}\df (\bar{\ep}_A^X,\ups_X\inv)$, and the natural isomorphism $\eta\ap: \Id_{\EDT}\lra \UUU\circ \CCC,\ \ X\mapsto \eta\ap_{X}$, where $\eta\ap_{X}\df \ups_X$.
\sqs

We also need to recall the following notion
from  \cite{DD1}.
A  z-algebra $(A,X)$ is called a {\em Boolean lz-algebra}
(briefly, an {\em lz-algebra})
if for every $\ult\in X$ there exists $a\in A$ such that $\ult\in \ep_A(a)\subseteq  X$.
Obviously, a z-algebra $(A,X)$ is an lz-algebra if, and only if,  $X$ is
 an open subset of\/ ${\sf Ult} (A)$.

 Let $\EDL$ be the category of extremally disconnected locally compact Hausdorff spaces and continuous maps,  and $\LZCB$ be the full subcategory of the category $\ZCB$ having as objects all complete lz-algebras. Then, using Theorem \ref{zdualityedll} and some results from \cite{DD1}, one obtains the following assertion:

\begin{theorem}\label{zdualityed1}
The categories $\EDL$ and $\LZCB$ are dually equivalent.
\end{theorem}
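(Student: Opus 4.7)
The plan is to derive this theorem by restricting the dual equivalence of Theorem \ref{zdualityedll} to the stated full subcategories. Since $\EDL$ is a full subcategory of $\EDT$ and $\LZCB$ is a full subcategory of $\ZCB$, it suffices to verify that the object assignments of the functors $\CCC$ and $\UUU$ respect these subcategories; then the natural isomorphisms $\eta'$ and $\ep'$ from the proof of Theorem \ref{zdualityedll} automatically restrict to the relevant components, yielding the claimed dual equivalence, together with the restrictions of the morphism assignments.

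The easier direction is to show that $\UUU$ maps $|\LZCB|$ into $|\EDL|$. For $(A,X) \in |\LZCB|$, Theorem \ref{zdualityedll} guarantees that $X$, viewed as a subspace of $\Ult(A)$, is extremally disconnected and Tychonoff. The extra lz-condition on $(A,X)$ asserts that $X$ is open in $\Ult(A)$, and since $\Ult(A)$ is compact Hausdorff, any open subspace is locally compact Hausdorff. Thus $X \in |\EDL|$.

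The more delicate direction is to show that $\CCC$ maps $|\EDL|$ into $|\LZCB|$, that is, given $X \in |\EDL|$, to prove that $\UP_X$ is open in $\Ult(\CO(X))$. Here I would use that an extremally disconnected Tychonoff space is zero-dimensional, so $X$, being locally compact, admits at each point $x$ a compact clopen neighborhood $U \in \CO(X)$. The key step is then to verify that
$$\ep_{\CO(X)}(U) \subseteq \UP_X,$$
which gives $\ult_x \in \ep_{\CO(X)}(U) \subseteq \UP_X$ and hence openness. For this, recall that $\ups_X : X \to \UP_X$ is a homeomorphism onto $\UP_X$ (with $\UP_X$ dense in the compact Hausdorff space $\Ult(\CO(X))$) and that $\UP_X \cap \ep_{\CO(X)}(U) = \ups_X(U)$. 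Since $U$ is compact in $X$, the set $\ups_X(U)$ is compact, hence closed, in $\Ult(\CO(X))$; but by density of $\UP_X$ the closure of $\ups_X(U)$ in $\Ult(\CO(X))$ coincides with the clopen set $\ep_{\CO(X)}(U)$, forcing the inclusion above.

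I expect this openness verification to be the main obstacle, as it requires the interaction between the Stone space $\Ult(\CO(X))$ and the original topology of $X$ via compact clopen neighborhoods. Once this is established, the functorial correspondence on morphisms is inherited from Theorem \ref{zdualityedll}, because a $\ZCB$-morphism $(\p,f)$ between lz-algebras is still determined by the underlying Boolean homomorphism $\p$ (with $f$ obtained by restricting $\Ultsf(\p)$), and a continuous map in $\EDT$ between objects of $\EDL$ is automatically a morphism in $\EDL$. The natural isomorphisms $\ep'$ and $\eta'$ of Theorem \ref{zdualityedll} then restrict componentwise to give the required unit and counit.
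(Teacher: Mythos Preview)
Your proposal is correct and follows the same strategy the paper indicates: restrict the dual equivalence of Theorem \ref{zdualityedll} to the full subcategories $\EDL$ and $\LZCB$ by checking that the functors $\CCC$ and $\UUU$ carry the distinguished objects to one another. The paper itself gives no argument beyond citing Theorem \ref{zdualityedll} together with results from \cite{DD1}, so your self-contained verification that $\UP_X$ is open in $\Ult(\CO(X))$ via compact clopen neighbourhoods supplies exactly the detail the paper outsources.
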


This dual equivalence is realized by the respective restrictions of the  functors ${\sf C}$ and ${\sf U}$ described above. For simplicity, the restrictions will be denoted again by ${\sf C}$ and ${\sf U}$.

The next assertion of Rump \cite[Proposition 14]{Rump} will be very useful for us. We denote by $Pf$ the class of perfect surjections in $\Top$ and use the (standard) notion
of  {\em $Pf$-projective object} as appearing in
condition (P2) of  \ref{P1-3} with $\XX=\YY=\Top$ and $\PP=Pf$.

\begin{pro}\label{rumped}{\rm \cite{Rump}}
A space $X$  is
$Pf$-projective in $\Top$ if, and only if, $X$ is extremally disconnected.
\end{pro}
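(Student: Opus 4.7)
The plan is to prove both implications via Gleason-type techniques, using pullback stability of perfect surjections and the absolute (Gleason cover) of a topological space as constructed in Rump's paper.

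For the direction ``$X$ extremally disconnected $\Rightarrow$ $X$ is $Pf$-projective'', I would first perform a standard pullback reduction. Given a perfect surjection $p\colon Y\to Y'$ and a continuous $f\colon X\to Y'$, form the pullback $P=X\times_{Y'}Y$ in $\Top$; since $Pf$ is pullback-stable, the projection $\pi\colon P\to X$ is a perfect surjection, and any section of $\pi$ composed with the second projection $P\to Y$ yields the desired lift of $f$. Hence it suffices to exhibit a section of an arbitrary perfect surjection $q\colon Z\to X$ onto an extremally disconnected $X$. Zorn's lemma applied to the family of closed subsets $C\sbe Z$ with $q(C)=X$, ordered by reverse inclusion (chains have nonempty intersection that still surjects onto $X$, by the compactness of the fibres of $q$ together with the closedness of $q$), produces a minimal such closed $Z_0\sbe Z$. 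Then $q_0:=q|_{Z_0}\colon Z_0\to X$ is perfect and irreducible, so Alexandroff's theorem (see Facts \ref{projectivel}) gives a Boolean isomorphism $\rho_{q_0}\colon \operatorname{RC}(Z_0)\to\operatorname{RC}(X)$. Since $X$ is extremally disconnected, $\operatorname{RC}(X)=\operatorname{CO}(X)$, and transporting this equality across $\rho_{q_0}$ forces $\operatorname{RC}(Z_0)=\operatorname{CO}(Z_0)$, so $Z_0$ is extremally disconnected as well. Invoking the universal property of Rump's absolute (any perfect irreducible preimage from an extremally disconnected space is homeomorphic over $X$ to $EX$), together with the fact that $1_X$ itself is already such a preimage when $X$ is extremally disconnected, one concludes that $q_0$ is a homeomorphism, whose inverse is the required section.

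For the converse, I would apply the $Pf$-projectivity of $X$ to the absolute $\pi_X\colon EX\to X$, a perfect irreducible surjection from an extremally disconnected space, obtaining a continuous section $s\colon X\to EX$ of $\pi_X$. Since $s$ is a section of the continuous map $\pi_X$, it is a homeomorphism from $X$ onto its image $s(X)$, with continuous inverse given by the restriction of $\pi_X$. Because $\pi_X$ is closed and surjective, the closure $\overline{s(X)}$ in $EX$ satisfies $\pi_X(\overline{s(X)})=X$; by the irreducibility of $\pi_X$ this forces $\overline{s(X)}=EX$, i.e., $s(X)$ is dense in $EX$. A dense subspace of an extremally disconnected space is itself extremally disconnected (closures of relative opens equal the restrictions of the corresponding clopen closures in the ambient space), so $X\cong s(X)$ is extremally disconnected.

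The main obstacle will be the invocation of the universal property of the absolute in the first direction, or equivalently, the direct verification that a perfect irreducible surjection $q_0\colon Z_0\to X$ between two extremally disconnected spaces is automatically a homeomorphism. The argument combines the closedness of $q_0$ with its openness (the latter following from the fact that preimages of clopen subsets of $X$ are regular closed by $\rho_{q_0}^{-1}$ and hence clopen by extremal disconnectedness of $Z_0$), and uses irreducibility to rule out non-trivial fibres; the delicate point is making this ``irreducibility rules out non-trivial fibres'' step go through in the generality of $\Top$ without implicitly invoking any separation axiom, which is why I would prefer to quote Rump's uniqueness result for absolutes rather than argue by hand.
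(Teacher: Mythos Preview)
The paper does not prove this proposition; it is quoted from Rump~\cite{Rump} (Proposition~14 there) and used as a black box. There is therefore no proof in the present paper to compare your attempt against.

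That said, your outline is essentially sound, and you have correctly located the one genuinely delicate spot. The converse direction is clean. In the forward direction, the pullback reduction, the Zorn argument producing a perfect irreducible $q_0\colon Z_0\to X$, and the transport of extremal disconnectedness to $Z_0$ all work (for clopen $K\subseteq X$ one has $\rho_{q_0}^{-1}(K)=\cl(q_0^{-1}(\Int\,K))=q_0^{-1}(K)$, so every regular closed subset of $Z_0$ is a preimage of a clopen set and hence clopen). Your parenthetical justification for the \emph{openness} of $q_0$, however, is garbled: showing that preimages of clopens are clopen is continuity, not openness, and in fact openness of $q_0$ here is essentially equivalent to the injectivity you are trying to establish. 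You are right that the step ``irreducibility rules out non-trivial fibres'' is exactly where separation hypotheses would normally enter, and right to defer to Rump's uniqueness theorem for the absolute rather than argue by hand---just be sure that in Rump's development the uniqueness of the absolute is established prior to and independently of Proposition~14, so that the citation is not circular.
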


Now we are ready to apply our general categorical theorem for extensions of dualities to the concrete case of $\AA={\bf lzCBoo}$, $\XX={\bf EdLKH}$, $\YY={\bf LKHaus}$ and the class of perfect irreducible mappings between locally compact Hausdorff spaces with extremally disconnected domain in the role of $\PP$.

Since local compactness is an inverse invariant of perfect mappings (see, for example, \cite[Theorem 3.7.24]{E}), one obtains (using Facts \ref{projectivel}) that the absolute of a locally compact Hausdorff space is an extremally disconnected locally compact Hausdorff space.
 With Facts \ref{projectivel}  one easily sees that the class ${\cal P}$
 satisfies conditions P1 and P3 of \ref{P1-3}, and using Proposition \ref{rumped} one obtains that $\PP$ satisfies condition P2
  as well. Thus, $\PP$ is an \EDL-covering class in {\bf LKHaus}.  An application of Theorem \ref{more duality} to the duality
\begin{center}
$\xymatrix{{\cal A}^{\rm op}=\LZCB^{\rm op}\ar@/^0.6pc/[rr]^{\quad T\,=\,{\sf U}} & {\scriptstyle \simeq} & \EDL={\cal X}\ar@/^0.6pc/[ll]^{\quad S\,=\,{\sf C}}
}$
\end{center}
 described in Theorem \ref{zdualityed1} produces the following dual representation of {\bf LKHaus}:

\begin{pro}\label{Fed applicationl}
$\xymatrix{{\cal B}^{\rm op}:=({\sf C}({\cal A},{\cal P},{\cal X})/\!\sim)^{\rm op}\ar@/^0.6pc/[rr]^{\quad\quad\tilde{T}} & {\scriptstyle \simeq} & \quad{{\bf LKHaus}=:{\cal Y}}\ar@/^0.6pc/[ll]^{\quad\quad\tilde{S}}.
}$
\end{pro}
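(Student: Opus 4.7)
The plan is to invoke Theorem \ref{more duality} with the choices $\AA = \LZCB$, $\XX = \EDL$, $\YY = \LKH$, the inclusion functor $J:\EDL\hookrightarrow\LKH$, and the dual equivalence $T={\sf U}$, $S={\sf C}$ provided by Theorem \ref{zdualityed1}. All that really needs to be checked is that the distinguished class $\PP$ (perfect irreducible maps $p: Z \to Y$ with $Z\in |\EDL|$ and $Y\in |\LKH|$) is an $\EDL$-covering class in $\LKH$ in the sense of \ref{P1-3}; the assertion then follows immediately.

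For condition P1, I would note that for any $X \in |\EDL|$ the identity $1_X : X \to X$ is trivially perfect and irreducible, so $1_X \in \PP$. For condition P3, given any $Y \in |\LKH|$, I would use Facts \ref{projectivel}: since local compactness is an inverse invariant of perfect maps (\cite[Thm.~3.7.24]{E}), the absolute $EY$ -- obtained for example by pulling the projective cover $\pi_{\alpha Y}: E(\alpha Y) \to \alpha Y$ back along the embedding $Y\hookrightarrow \alpha Y$ of $Y$ into its Alexandroff compactification -- is an extremally disconnected locally compact Hausdorff space, and the resulting projective cover $\pi_Y : EY \to Y$ is a perfect irreducible surjection. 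Hence $\pi_Y \in \PP$ with $EY \in |\EDL|$, which is exactly P3.

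For condition P2, I would appeal to Rump's Proposition \ref{rumped}: every extremally disconnected space is $Pf$-projective in $\Top$, where $Pf$ is the class of all perfect surjections. Since every map in $\PP$ is a perfect surjection (perfect irreducible maps are, a fortiori, surjective onto their codomain), the projectivity of any $X \in |\EDL|$ with respect to $\PP$ is a special case of its $Pf$-projectivity: for any $p : Z \to Y'$ in $\PP$ and $f: X \to Y'$ in $\LKH$, a lift $g: X \to Z$ with $p\circ g = f$ exists in $\Top$, and lies automatically in $\LKH$ since $X,Z \in |\LKH|$.

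With P1, P2, P3 verified and the dual equivalence of Theorem \ref{zdualityed1} in hand, Theorem \ref{more duality} applies verbatim, producing the quotient category $\BB = {\sf C}(\AA,\PP,\XX)/\!\sim$ together with the dual equivalence $\xymatrix{\BB^{\rm op}\ar@/^0.4pc/[r]^{\tilde T} & \LKH\ar@/^0.4pc/[l]^{\tilde S}}$ extending ${\sf U}\dashv {\sf C}$ along $I$ and $J$. The only mild subtlety is to keep track that the maps in $\PP$ are required to be \emph{irreducible} and not merely perfect surjections; this restriction is harmless for P2 (since irreducibility does not obstruct lifting) and is exactly what we want in P3 in order to retain enough structure (via Alexandroff's Theorem, Facts \ref{projectivel}) for the later identification of $\BB$ with $\DBoo/\!\backsim$. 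No obstacle is anticipated beyond this bookkeeping.
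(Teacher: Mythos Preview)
Your proposal is correct and follows essentially the same route as the paper: verify P1--P3 for the class $\PP$ of perfect irreducible maps with extremally disconnected locally compact Hausdorff domain (using Facts~\ref{projectivel} and the inverse invariance of local compactness under perfect maps for P3, and Rump's Proposition~\ref{rumped} for P2), and then apply Theorem~\ref{more duality} to the dual equivalence of Theorem~\ref{zdualityed1}. Your added remark that irreducibility is irrelevant for P2 is a helpful clarification but not a departure from the paper's argument.
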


\noindent Here the objects of the category ${\sf C}({\cal A},{\cal P},{\cal X})$ are triples $(A,Z,p\!:\!Z\to Y)$ with $p\in{\cal P}$ and  $(A,Z)\in|\LZCB|$ (that is, $A$ is a complete Boolean algebra and $Z$ is an open dense subset of $\Ultsf(A)$); morphisms $(\varphi,g,f):(A,Z,p)\to(A',Z\ap,p\ap)$ are given by Boolean homomorphisms $\varphi:A\to A'$,  maps $g:Z\ap\to Z$ that are restrictions of the maps $\Ultsf(\p):\Ultsf(A\ap)\to\Ultsf(A)$, and maps $f$ that make the diagram
\begin{center}
$\xymatrix{Z\ar[d]_{p} & Z\ap\ar[l]_{g}\ar[d]^{p\ap}\\
            Y & Y'\ar[l]^{f}}$
            \end{center}
commute (see Construction \ref{const}). We note that such a map $f$ is necessarily continuous and uniquely determined by $g$ since, as a closed surjection, the map $p\ap$ in ${\cal P}$ provides its codomain with the quotient topology of its domain. The projection functor ${\sf C}({\cal A},{\cal P},{\cal X})^{\rm op}\to {\bf LKHaus},\;(\varphi,g,f)\mapsto f,$ induces the compatible relation $\sim$ on ${\sf C}({\cal A},{\cal P},{\cal X})$, so that
$((\varphi,g,f)\sim(\p_1,g_1,f_1)\Longleftrightarrow f=f_1)$
for $(\varphi,g,f),(\p_1,g_1,f_1):(A,Z,p)\to(A',Z\ap,p\ap)$.
 We obtain the quotient category ${\cal B}$, with the same objects as in ${\sf C}({\cal A},{\cal P},{\cal X})$. The contravariant functor  $\tilde{T}$ is induced by the projection functor; that is:
 $$\tilde{T}:{\cal B}^{\rm op}\to{\bf LKHaus},\;([\varphi,g,f]:(A,Z,p)\to(A',Z\ap,p\ap))\mapsto f.$$
 With $\pi_Y: EY\to Y$ denoting the projective cover of a locally compact Hausdorff space $Y$ (see Facts \ref{projectivel}), the adjoint $\tilde{S}$ of $\tilde{T}$, as defined  in Theorem \ref{more duality}, formally assigns to $Y$
  the ${\cal B}$-object $({\sf C}(EY), \pi_Y\circ(\eta\ap_{EY})\inv)$ which, however, is naturally isomorphic to $({\sf RC}(\a Y),EY,\pi_Y)$; here $\eta\ap$ and $\varepsilon\ap$ are, respectively, the units and counit of the dual equivalence from Theorem  \ref{zdualityed1}:
 \begin{center}
 $\xymatrix{EY={\sf U(RC}(\a Y),EY)\ar[dd]_{\pi_Y} &  & & {\sf U(C}(EY))\ar[d]^{(\eta\ap_{EY})\inv}\ar[lll]_{\quad{\sf U}(\varepsilon\ap_{({\sf RC}(\a Y), EY)})}\\
& & & EY\ar[d]^{\pi_Y}  \\
 Y & & & Y\ar[lll]_{1_Y}
 	}$
 \end{center}
 We may therefore assume $\tilde{S}(Y)=({\sf RC}(\a Y),EY,\pi_Y)$
 and $\tilde{S}(f)=[(\varphi,g)_f,f]$ for $f:Y^{\,\prime}\to Y$ in ${\bf LKHaus}$, where $(\varphi,g)_f:({\sf RC}(\a Y),EY)\to ({\sf RC}(\a Y^{\,\prime}),EY\ap)$ is an $\LZCB$-morphism such that
 $\pi_Y\circ {\sf U}((\varphi,g)_f)=f\circ\pi_{Y^{\,\prime}}$.
 We may even assume that $\tilde{S}(Y)=({\sf RC}(Y),E\ap Y,\pi\ap_Y)$, where $E\ap Y=\Ultsf(e)(EY)$, $e:\rc(Y)\to\rc(\a Y)$ is the Boolean isomorphism described at the end of Facts \ref{projectivel}, and $\pi\ap_Y=\pi_Y\circ ((\Ultsf(e))\inv)|E\ap Y$. Accordingly, for  $f:Y^{\,\prime}\to Y$ in ${\bf LKHaus}$, we may assume that $\tilde{S}(f)=[(\varphi\ap,g\ap)_f,f]$, where $(\varphi\ap,g\ap)_f:({\sf RC}(Y),E\ap Y)\to ({\sf RC}(Y^{\,\prime}),E\ap Y\ap)$ is an $\LZCB$-morphism such that
 $\pi\ap_Y\circ {\sf U}((\varphi\ap,g\ap)_f)=f\circ\pi\ap_{Y^{\,\prime}}$. For simplicity, we write  $EY$ instead of $E\ap Y$ and  $\pi_Y$
 instead of $\pi\ap_Y$.
 This
  leaves all assertions of Theorem \ref{more duality} in tact. In fact, it simplifies them, since we now have that the natural isomorphism $\gamma$ of Theorem \ref{more duality} is actually an identity transformation: $IS=\tilde{S}J$.

 \begin{fact}\label{creql}
 Let $(A,Z,p:Z\to Y)$ be a $\CAPX$-object. Then, for the map $\rho_p$ of Facts \ref{projectivel}  and
  $\KO(Z)$ denoting the set of
 compact open subsets of $Z$, one has
 $\rho_p(\KO(Z))=\CR(Y)$.
  \end{fact}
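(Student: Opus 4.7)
The plan is to exploit three facts in sequence: that $Z$ is extremally disconnected (so regular closed $=$ clopen), that $p$ is perfect (so preimages of compacts are compact), and that $\rho_p$ is a Boolean isomorphism with $\rho_p^{-1}(K)=\cl_Z(p^{-1}(\int_Y(K)))$ (by Alexandroff's Theorem in Facts~\ref{projectivel}).

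First I would establish the inclusion $\rho_p(\KO(Z))\sbe\CR(Y)$. Given $H\in\KO(Z)$, observe that in an extremally disconnected space every clopen set is regular closed, so $H\in\RC(Z)$ and therefore $\rho_p(H)=p(H)\in\RC(Y)$. Since $H$ is compact and $p$ is continuous, $p(H)$ is compact; hence $\rho_p(H)\in\CR(Y)$.

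For the reverse inclusion $\CR(Y)\sbe\rho_p(\KO(Z))$, take $K\in\CR(Y)$ and set $H\df\rho_p^{-1}(K)=\cl_Z(p^{-1}(\int_Y(K)))\in\RC(Z)$. Because $Z$ is extremally disconnected and locally compact Hausdorff, the regular closed set $H$ is in fact clopen, hence open. For compactness, I would use that $H\sbe\cl_Z(p^{-1}(K))=p^{-1}(K)$ (the latter being closed as $p$ is continuous and $K$ closed); since $p$ is perfect, $p^{-1}(K)$ is compact, so the closed subset $H$ is compact as well. Thus $H\in\KO(Z)$ and $\rho_p(H)=K$, completing the proof.

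The argument is essentially a bookkeeping exercise that strings together standard facts; no serious obstacle is expected. The only point to be careful about is invoking extremal disconnectedness at the right place to pass from ``regular closed'' to ``open'', which is precisely what makes $H$ belong to $\KO(Z)$ rather than merely to $\CR(Z)$ (and, symmetrically, ensures that the image $\rho_p(H)$ of a compact \emph{open} set is automatically regular closed).
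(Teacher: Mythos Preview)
Your proof is correct and follows essentially the same route as the paper's: both directions rely on $\rho_p^{-1}(K)=\cl_Z(p^{-1}(\int_Y(K)))\sbe p^{-1}(K)$ and perfectness of $p$ for compactness, together with extremal disconnectedness of $Z$ to identify $\KO(Z)$ with $\CR(Z)$. A tiny remark: in the forward inclusion you do not actually need extremal disconnectedness, since any clopen set in any space is regular closed; the paper simply records $\KO(Z)=\CR(Z)$ once at the outset and then argues with $\CR(Z)$ throughout.
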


\doc Since $Z$ is extremally disconnected, we have  $\KO(Z)=\CR(Z)$ and must  therefore show $\rho_p(\CR(Z))=\CR(Y)$. The inclusion  $\rho_p(\CR(Z))\sbe\CR(Y)$ is obvious. Conversely, for $G\in \CR(Y)$, since $p$ is perfect, \cite[Theorem 3.7.2]{E} implies that $p\inv(G)$ is compact. Then $\rho_p\inv(G)=\cl_Z(p\inv(\int_Y(G)))\sbe p\inv(G)$ and thus $\rho_p\inv(G)\in\CR(Z)$.
\sqs

\begin{pro}\label{smfpll}
 Let $(A,Z,p:Z\to Y)$ be a $\CAPX$-object. With $\smallfrown_p$ defined by
 $$a\smallfrown_p b\iff p(\varepsilon_A^Z(a))\cap p(\varepsilon_A^Z(b))\neq \emptyset\quad(a,b\in A)$$
 and $ \BBBB_{p}\df (\ep_A^Z)\inv(\KO(Z))$,
 $(A,\smf_p,\BBBB_{p})$ becomes a complete local contact algebra and
 $$\rho_p\circ\bar{\ep}_A^Z:(A,\smf_p,\BBBB_{p})\lra (\rc(Y),\smf_Y,\CR(Y))$$  a $\CLCA$-isomorphism. Also, $\BBBB_{p}=\{a\in A\st \ep_A(a)\sbe Z\}$.
 \end{pro}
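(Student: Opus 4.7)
The overall strategy is to exhibit $\psi := \rho_p \circ \bar{\ep}_A^Z : A \to \rc(Y)$ as a Boolean isomorphism that transports the standard local contact structure $(\smf_Y, \CR(Y))$ on $\rc(Y)$ back to $(\smf_p, \BBBB_p)$ on $A$. Since $(\rc(Y), \smf_Y, \CR(Y))$ is a complete local contact algebra by Proposition \ref{stanlocn}, the claim that $(A, \smf_p, \BBBB_p)$ is a complete local contact algebra and the claim that $\psi$ is a $\CLCA$-isomorphism will then both follow immediately from transport of structure.

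First I would build $\psi$ as a composite of three Boolean isomorphisms. Since $A$ is complete, $\Ultsf(A)$ is extremally disconnected, so $\co(\Ultsf(A)) = \rc(\Ultsf(A))$ and the Stone isomorphism $\ep_A$ lands in $\rc(\Ultsf(A))$. Because $Z$ is open and dense in $\Ultsf(A)$, the map $r : \rc(\Ultsf(A)) \to \rc(Z),\, F \mapsto F \cap Z$, is a Boolean isomorphism (last paragraph of Facts \ref{projectivel}), and $\bar{\ep}_A^Z = r \circ \ep_A$ (noting that $\co(Z) = \rc(Z)$ since $Z$, being open in an extremally disconnected space, is itself extremally disconnected). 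Finally, as $p$ is perfect irreducible, hence closed irreducible, Alexandroff's Theorem (Facts \ref{projectivel}) makes $\rho_p : \rc(Z) \to \rc(Y)$ a Boolean isomorphism.

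Next I would verify that $\psi$ matches the two structural layers. For the contact relation, using $\rho_p(H) = p(H)$ one reads
$a \smf_p b \iff p(\bar{\ep}_A^Z(a)) \cap p(\bar{\ep}_A^Z(b)) \nes \iff \psi(a) \cap \psi(b) \nes \iff \psi(a) \smf_Y \psi(b)$.
For boundedness, Fact \ref{creql} gives $\rho_p(\KO(Z)) = \CR(Y)$, and since $\rho_p$ is bijective one obtains
$a \in \BBBB_p \iff \bar{\ep}_A^Z(a) \in \KO(Z) \iff \psi(a) \in \CR(Y)$.
These two matchings settle the first two assertions.

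The last assertion $\BBBB_p = \{a \in A : \ep_A(a) \sbe Z\}$ requires the separate observation that the inverse of $r$ sends $G \in \rc(Z)$ to $\cl_{\Ultsf(A)}(G)$. For $a \in \BBBB_p$, the set $\ep_A(a) \cap Z$ is compact open in $Z$; compactness forces it to be closed in $\Ultsf(A)$, and its openness in $Z$ combined with $Z$ being open in $\Ultsf(A)$ forces it to be open in $\Ultsf(A)$ as well. Hence $\cl_{\Ultsf(A)}(\ep_A(a) \cap Z) = \ep_A(a) \cap Z$, but this closure is also $r^{-1}(\bar{\ep}_A^Z(a)) = \ep_A(a)$, so $\ep_A(a) \sbe Z$; the reverse implication is immediate. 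I expect the only delicate step to be this last one, where one has to promote ``closed in $Z$'' to ``closed in $\Ultsf(A)$'' via the compactness hypothesis; everything else is routine transport along Boolean isomorphisms.
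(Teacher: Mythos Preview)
Your proposal is correct and follows essentially the same route as the paper: establish that $\psi=\rho_p\circ\bar{\ep}_A^Z$ is a Boolean isomorphism, observe that $\smf_p$ and $\BBBB_p$ are by construction the $\psi$-preimages of $\smf_Y$ and $\CR(Y)$ (the latter via Fact~\ref{creql}), and conclude by transport of structure from Proposition~\ref{stanlocn}(a). The paper's argument for the final identity $\BBBB_p=\{a:\ep_A(a)\sbe Z\}$ is phrased slightly more tersely---it just says that compactness of $\ep_A^Z(a)$ together with density of $Z$ forces $\ep_A^Z(a)=\ep_A(a)$---but unwinding this gives exactly your closure computation via $r^{-1}=e$; your remark about openness in $\Ultsf(A)$ is true but not actually needed.
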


  \doc By Proposition \ref{stanlocn}(a), $(\rc(Y),\smf_Y,\CR(Y))$ is a complete local contact algebra.  Fact \ref{creql} gives us the equality $(\rho_p\circ\bar{\ep}_A^Z)(\BBBB_{p})=\CR(Y)$. Further,  by definition, the relation $\smf_p$ is obtained by transferring the contact relation  $\smf_Y$ along the (inverse of the) isomorphism $\rho_p\circ\bar{\varepsilon}_A^Z$ (note that, since $Z$ is extremally disconneceted, $\RC(Z)=\CO(Z)$).
  Hence,  $\rho_p\circ\bar{\ep}_A^Z$ is an $\CLCA$-isomorphism.  For proving the last assertion,  let $a\in A$ and $\ep_A(a)\sbe Z$. Then
 $\ep_A^Z(a)=\ep_A(a)\in\KO(Z)$.
 Conversely, if $\ep_A^Z(a)\in\KO(Z)$ then $\ep_A^Z(a)=\ep_A(a)$ (because $Z$ is dense in $\Ultsf(A)$) and, thus, $\ep_A(a)\sbe Z$. Therefore, $\BBBB_{p}=\{a\in A\st \ep_A(a)\sbe Z\}$.
 \sqs

\begin{rem}\label{pupv1l}
\rm For $(A,Z,p:Z\to Y)\in |{\sf C}({\cal A},{\cal P},{\cal X})|$ and all $\mathfrak{u},\mathfrak{v}\in Z$ we note that
$$\mathfrak{u}\smallfrown_p \mathfrak{v}\Longleftrightarrow p(\mathfrak{u})=p(\mathfrak{v}).$$
Indeed, the implication ``$\Longrightarrow$'' follows easily from the Hausdorffness of $Y$ and the fact that $\{\varepsilon_A^Z(a)\ |\ a\in A\}$ is an open base for $Z$, while  the converse implication is obvious.
\end{rem}

\begin{defi}\label{dboo}
\rm
Let ${\bf DBoo}$ be the category
whose objects are
all complete local contact algebras, and whose morphisms $\varphi: (A_1,\smf_1,\BBBB_1)\lra (A_2,\smf_2,\BBBB_2)$ are  Boolean homomorphism between $A_1$ and $A_2$ reflecting the contact relation (or, equivalently, preserving the associated relation $\ll$) and satisfying condition (CLC4) from Definition \ref{dhc};
the composition of ${\bf DBoo}$-morphisms proceeds by ordinary map composition.
\end{defi}

Note that the objects of the category $\DBoo$ are the same as those of the category {\bf CLCA} but, unlike $\CLCA$-morphisms, its morphisms are {\em Boolean} homomorphism respecting in a natural way the contact relations and boundedness; moreover, their categorical composition coincides with their composition in $\Set$.
\begin{pro}\label{U}
There is a functor
\begin{center}
$\xymatrix{{\sf C}({\cal A},{\cal P},{\cal X})\ar[r]^{\;\;U} & {\bf DBoo},}$
\end{center}
defined on objects by $U(A,Z,p:Z\to Y)=(A,\smf_p,\BBBB_{p})$,  and on morphisms by $U(\p,g,f)=\p$.
\end{pro}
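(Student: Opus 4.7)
\medskip
\noindent\emph{Plan of proof.} The well-definedness of $U$ on objects is essentially Proposition \ref{smfpll}, which already asserts that $(A,\smf_p,\BBBB_p)$ is a complete local contact algebra. Functoriality of $U$ will be immediate once well-definedness on morphisms is established, since the assignment $(\p,g,f)\mapsto\p$ clearly preserves identities and composition (recall that in ${\sf C}(\AA,\PP,\XX)$ composition is given componentwise, and in $\DBoo$ it is ordinary map composition). The real work is therefore to verify that, for every $\CAPX$-morphism $(\p,g,f):(A,Z,p)\to(A\ap,Z\ap,p\ap)$, the Boolean homomorphism $\p:A\to A\ap$ reflects the contact relation from $(A\ap,\smf_{p\ap})$ to $(A,\smf_p)$ and satisfies condition (CLC4) with respect to the ideals $\BBBB_p$ and $\BBBB_{p\ap}$.

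The key technical identity is
\[
\ep^{Z\ap}_{A\ap}(\p(a))=g\inv(\ep^Z_A(a))\qquad(a\in A),
\]
which follows because $g$ is the restriction to $Z\ap$ of $\Ultsf(\p):\Ultsf(A\ap)\to\Ultsf(A)$, $\mathfrak u\ap\mapsto\p\inv(\mathfrak u\ap)$, so $\p(a)\in\mathfrak u\ap$ if, and only if, $a\in g(\mathfrak u\ap)$. Combining this identity with the commutativity $p\circ g=f\circ p\ap$ given by the definition of a $\CAPX$-morphism, I will deduce that $\p$ reflects the contact relation: if $\p(a)\smf_{p\ap}\p(b)$, pick $\mathfrak u\ap_1,\mathfrak u\ap_2\in Z\ap$ with $g(\mathfrak u\ap_1)\in\ep^Z_A(a)$, $g(\mathfrak u\ap_2)\in\ep^Z_A(b)$, and $p\ap(\mathfrak u\ap_1)=p\ap(\mathfrak u\ap_2)$; applying $f$ gives a common point $f(p\ap(\mathfrak u\ap_i))=p(g(\mathfrak u\ap_i))$ in $p(\ep^Z_A(a))\cap p(\ep^Z_A(b))$, so $a\smf_p b$.

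For (CLC4) I will use Proposition \ref{smfpll} to identify $(A,\smf_p,\BBBB_p)$ with the standard local contact algebra $(\rc(Y),\smf_Y,\CR(Y))$ via the $\CLCA$-isomorphism $\rho_p\circ\bar{\ep}_A^Z$, and analogously for the primed side. Given $b\ap\in\BBBB_{p\ap}$, the set $\ep^{Z\ap}_{A\ap}(b\ap)\in\KO(Z\ap)$ is compact; by Fact \ref{creql} its image $F\ap=p\ap(\ep^{Z\ap}_{A\ap}(b\ap))$ is a compact element of $\CR(Y\ap)$. Continuity of $f$ makes $f(F\ap)$ a compact subset of the locally compact Hausdorff space $Y$, so I can choose $F\in\CR(Y)$ compact with $f(F\ap)\sbe\int_Y(F)$. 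Setting $a:=(\rho_p\circ\bar{\ep}_A^Z)\inv(F)\in\BBBB_p$, the inclusion $f(F\ap)\sbe\int_Y(F)$ combined with $p\circ g=f\circ p\ap$ yields $g(\ep^{Z\ap}_{A\ap}(b\ap))\sbe p\inv(\int_Y(F))\sbe\bar{\ep}_A^Z(a)$; by the key identity this rearranges to $\ep^{Z\ap}_{A\ap}(b\ap)\sbe\ep^{Z\ap}_{A\ap}(\p(a))$, hence $b\ap\le\p(a)$ by density of $Z\ap$ in $\Ultsf(A\ap)$.

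I expect the main obstacle to be the (CLC4) step, since it requires carefully interlocking three ingredients: the compactness of images under perfect maps, a local-compactness argument producing a compact regular-closed neighbourhood of $f(F\ap)$ in $Y$, and the translation between the algebraic statement $b\ap\le\p(a)$ and the topological inclusion of base sets in $Z\ap$. The reflection of contact, by contrast, is a direct diagram chase once the identity $\ep^{Z\ap}_{A\ap}\circ\p=g\inv\circ\ep^Z_A$ is in hand, and functoriality is formal.
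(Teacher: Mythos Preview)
Your proposal is correct. The verification that $\p$ reflects the contact relation and the remark on functoriality are essentially identical to the paper's argument, down to the diagram chase $p\circ g=f\circ p\ap$.

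The only noteworthy difference lies in your treatment of (CLC4). You pass down to $Y$ and $Y\ap$: you push $\ep^{Z\ap}_{A\ap}(b\ap)$ forward by $p\ap$ to a compact $F\ap\in\CR(Y\ap)$, then use continuity of $f$ and local compactness of $Y$ to engulf $f(F\ap)$ in some $F\in\CR(Y)$, and finally pull $F$ back through the $\CLCA$-isomorphism $\rho_p\circ\bar\ep_A^Z$ of Proposition~\ref{smfpll}. The paper instead stays upstairs in $Z$: since $\ep_{A\ap}(b\ap)\sbe Z\ap$ is compact and $g$ is continuous, $g(\ep_{A\ap}(b\ap))$ is a compact subset of the (extremally disconnected, hence zero-dimensional) locally compact Hausdorff space $Z$, so it sits inside some $F\in\KO(Z)=\ep_A(\BBBB_p)$; writing $F=\ep_A(b)$ with $b\in\BBBB_p$ and applying Stone duality gives $b\ap\le\p(b)$ directly. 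Your route works, but the paper's is shorter because it exploits the zero-dimensionality of $Z$ and avoids invoking $\rho_p$, Fact~\ref{creql}, and the Alexandroff inverse formula; you could simplify your argument accordingly.
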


\doc  Proposition \ref{smfpll} shows that $U$ is well-defined on objects.
For proving that $U$ is well-defined on morphisms,
 we show that, for a $\CAPX$-morrphism $(\p,g,f):(A,Z,p)\lra (A\ap,Z\ap,p\ap),$
 the map
 $\varphi:A\to A'$ reflects the contact relations imposed by $U$ and satisfies condition (CLC4).  So, let $a,b\in A$ and  $\varphi(a)\smallfrown_{p\ap}\varphi(b)$. Then $p\ap(\mathfrak u')=p\ap(\mathfrak v')$ for some $\mathfrak u', \mathfrak v'\in Z\ap$ with $\varphi(a)\in\mathfrak u', \varphi(b)\in\mathfrak v'$; consequently,
$p(\varphi^{-1}(\mathfrak u'))=p(g(\ult\ap))=f(p\ap(\mathfrak u'))=f(p\ap(\mathfrak v'))=p(g(\ultv\ap))=p(\varphi^{-1}(\mathfrak v')),$
which implies $a\smallfrown_p b$. Hence, $\p$ reflects contact relations. Let now $b\ap\in\BBBB_{p\ap}$. Then $\ep_{A\ap}(b\ap)\sbe Z\ap$. Since $g(Z\ap)\sbe Z$, we obtain that $\Ultsf(\p)(\ep_{A\ap}(b\ap))=g(\ep_{A\ap}(b\ap))\sbe Z$ and $g(\ep_{A\ap}(b\ap))$ is compact. Thus there exists $F\in\KO(Z)$ such that $g(\ep_{A\ap}(b\ap))\sbe F$. Then, by the definition of $\BBBB_{p}$   (see Proposition \ref{smfpll}), $F=\ep_A(b)$ for some $b\in \BBBB_{p}$. So, we have that $\Ultsf(\p)(\ep_{A\ap}(b\ap))\sbe\ep_A(b)\sbe Z$. Thus, by the Stone Duality, $\ep_{A\ap}(b\ap)\sbe (\Ultsf(\p))\inv(\ep_A(b))=\co(\Ultsf(\p))(\ep_A(b))=\ep_{A\ap}(\p(b))$. This implies $b\ap\le\p(b)$. Therefore, $U$ is well-defined on morphisms. It is now easy to see that $U$ is a functor.
\sqs

\begin{pro}\label{eqrel}
Let $(A,\smf,\BBBB)$ be a local contact algebra. Then, letting  $Z$ denote the set\/ $\BUlt(A)$ regarded  as a subspace of $\Ultsf(A)$,
one has:

\smallskip

\noindent{\rm (a)} the relation $\smf$ on the ultrafilters in $A$ as defined in Facts \ref{deVriesl} is an equivalence relation on the set\/ $\BUlt(A)$;

\smallskip

\noindent{\rm (b)} for every $b\in A$ one has $b\in\BBBB$ if, and only if, $\ep_A(b)\sbe Z$;

\smallskip

\noindent{\rm (c)} $Z$ is an open dense subset of the space $\Ultsf(A)$ and, thus, $Z$ is a locally compact Hausdorff space; furthermore, if $A$ is complete, then $Z$ is extremally disconnected;

\smallskip

\noindent{\rm (d)} the natural quotient map $p_{A}:Z\to Y=Z/\smf$ is  perfect irreducible, and the quotient space $Y$ is a locally compact Hausdorff space.
\end{pro}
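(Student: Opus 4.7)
For (a), I would first pass to the associated normal contact algebra $(A, \smf_{\rm\! Al})$ from Lemma~\ref{Alexprn1}; by Facts~\ref{deVriesl} its ultrafilter extension is already an equivalence relation on $\Ult(A)$, so it suffices to check that on $\BUlt(A) \times \BUlt(A)$ this relation coincides with the extension of $\smf$. Given $\ult, \ultv \in \BUlt(A)$ with witnesses $b_0 \in \ult \cap \BBBB$ and $d_0 \in \ultv \cap \BBBB$, for arbitrary $c \in \ult$, $d \in \ultv$ the meets $c \we b_0 \in \ult$ and $d \we d_0 \in \ultv$ are bounded, so $\ult \smf_{\rm\! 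Al} \ultv$ yields $c \we b_0 \smf_{\rm\! Al} d \we d_0$, in which the ``both unbounded'' disjunct of Definition~\ref{Alexprn} fails, leaving $c \smf d$. For (b), the forward direction is clear from the definition of $Z$; for the converse, if $b \not\in \BBBB$, then $\{b\} \cup \{c^* : c \in \BBBB\}$ has the finite intersection property---were $b \we c^* = 0$, we would have $b \le c \in \BBBB$---so any ultrafilter extending it contains $b$ yet meets $\BBBB$ trivially. For (c), openness is the equality $Z = \bigcup_{b \in \BBBB} \ep_A(b)$; density follows from BC3 by picking, for each $a > 0$, a bounded $b > 0$ with $b \le a$; and an open subspace of the Stone space $\Ultsf(A)$ is locally compact Hausdorff, and extremally disconnected when $A$ is complete (Facts~\ref{Stonel}).

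Part (d) is the main obstacle. The plan is to realise $p_A$ as the restriction of the canonical map $\pi : \Ultsf(A) \to \Clust(A, \smf_{\rm\! Al})$, $\ult \mapsto \mathfrak{c}_\ult$, where the codomain carries the topology introduced in the proof of Theorem~\ref{roeperl}, with $\{\tau_A(a) : a \in A\}$ as a base for closed sets. Continuity of $\pi$ will follow from $\pi\inv(\tau_A(a)) = \Ultsf(A) \setminus \bigcup_{c \ll_{\rm\! Al} a^*} \ep_A(c)$, and Hausdorffness of the codomain from a standard separation argument: two distinct clusters yield some $a \in \mathfrak{c}_1 \setminus \mathfrak{c}_2$, interpolated via axiom I5 as $a \ll_{\rm\! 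Al} c \ll_{\rm\! Al} b^*$ for a suitable $b \in \mathfrak{c}_2$, after which $\{\mathfrak{c} : c \nin \mathfrak{c}\}$ and $\{\mathfrak{c} : c^* \nin \mathfrak{c}\}$ are disjoint open sets separating them. Being a continuous surjection from a compact space to a Hausdorff one, $\pi$ is automatically closed with compact fibres, hence perfect, and in particular a quotient map. I would next identify $\pi\inv(\BClu(A)) = Z$ (via Proposition~\ref{bbclf}) and verify that $\BClu(A)$ is open in $\Clust(A, \smf_{\rm\! Al})$: in the nontrivial case $1 \nin \BBBB$, Proposition~\ref{bbcl1}(a) provides, for each $\mathfrak{c} \in \BClu(A)$, bounded $c \ll b$ in $\mathfrak{c}$; since $b$ is bounded and $1 \nin \BBBB$, the complement $b^*$ must be unbounded, so the open set $\Clust(A,\smf_{\rm\! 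Al}) \setminus \tau_A(b^*)$ contains $\mathfrak{c}$ but excludes the unique unbounded cluster $\mathfrak{c}_\infty$ (Lemma~\ref{neogrn}). Using (a) to equate $\smf$ with $\smf_{\rm\! Al}$ on $Z$, restriction of $\pi$ to the saturated open subspace $Z = \pi\inv(\BClu(A))$ then identifies $Y = Z/\smf$ with $\BClu(A)$ as topological spaces; local compactness and Hausdorffness of $Y$ follow from Theorem~\ref{roeperl}(a), and $p_A$ inherits perfectness from $\pi$.

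The remaining and most delicate step is irreducibility of $p_A$. Since $p_A$ is a closed surjection, irreducibility reduces to producing, for every nonempty open $U \sbe Z$, a bounded cluster $\mathfrak{c}$ with $p_A\inv(\mathfrak{c}) \sbe U$. Writing $U = V \cap Z$ with $V$ open in $\Ultsf(A)$, picking $\ult \in U$, a basic neighbourhood $\ep_A(a') \sbe V$ of $\ult$, and $b_0 \in \ult \cap \BBBB$, we may replace $a'$ by $a = a' \we b_0 > 0$ and ensure $\ep_A(a) \sbe V \cap Z = U$ with $a \in \BBBB$. BC3 supplies bounded $b > 0$ with $b \ll a$, and BC1 a bounded $c$ with $b \ll c \ll a$; since $b, c$ are bounded, these coincide with $b \ll_{\rm\! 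Al} c \ll_{\rm\! Al} a$. Fixing any ultrafilter $\ult \ni b$, it is bounded, and its cluster $\mathfrak{c} := \mathfrak{c}_\ult \in \BClu(A)$ excludes $a^*$: with $c \in \ult \sbe \mathfrak{c}$ and $c \not\smf_{\rm\! Al} a^*$, the contrapositive of cluster axiom (cl3) forces $a^* \nin \mathfrak{c}$. Every $\ultv \in \pi\inv(\mathfrak{c})$ then satisfies $\ultv \sbe \mathfrak{c}_\ultv = \mathfrak{c}$, so $a^* \nin \ultv$ and hence $a \in \ultv$; this gives $p_A\inv(\mathfrak{c}) = \pi\inv(\mathfrak{c}) \sbe \ep_A(a) \sbe U$, as required.
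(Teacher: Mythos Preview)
Your proof is correct. Parts (a)--(c) are essentially the same as the paper's, with minor variations: for (a) you reduce to the known equivalence relation $\smf_{\rm Al}$ rather than proving transitivity directly, and for (b) you build the unbounded ultrafilter by hand rather than invoking the cluster $\mathfrak{c}_\infty$ of Lemma~\ref{neogrn}.

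For (d), your route differs substantially from the paper's. The paper works entirely inside $Z$: it verifies irreducibility by contradiction, shows each fibre $p_A^{-1}(p_A(\ult))$ is closed in $\Ultsf(A)$ directly, and proves closedness via the Engelking criterion that the union of all equivalence classes contained in a given open set is open---a fairly long elementary argument with several case distinctions. You instead factor $p_A$ through the compact cluster space $\Clust(A,\smf_{\rm Al})$: once $\pi:\Ultsf(A)\to\Clust(A,\smf_{\rm Al})$ is shown continuous with Hausdorff codomain, perfectness of $\pi$ (and hence of its restriction to the saturated open set $Z$) is automatic, so only irreducibility needs a direct argument. Your approach is shorter and more conceptual, and in effect anticipates the homeomorphism $\gamma_A$ of Proposition~\ref{full propl}; the paper's approach is longer but entirely self-contained, not relying on the topology of the cluster space or on Roeper's theorem for the local compactness of $Y$.

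Two minor points: when you conclude $a^*\notin\mathfrak{c}$ from $c\in\mathfrak{c}$ and $c\not\smf_{\rm Al}a^*$, the relevant axiom is the pairwise-contact axiom (cl\,2), not (cl\,3); and the existence of bounded $c\ll b$ in a bounded cluster uses (BC1) together with upward closure of clusters rather than Proposition~\ref{bbcl1}(a) alone.
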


\doc (a) It is clear that the relation $\smf$ on $Z$ is reflexive and symmetric. To show its transitivity, we consider $\ult,\ultv,\ultw\in Z$ with $\ult\smf\ultv$ and $\ultv\smf\ultw$
and suppose that  there exist $a_0\in\ult$ and $c_0\in\ultw$ with $a_0\nsmf c_0$. Since $\ult$ is bounded, there exists $a_1\in\ult\cap\BBBB$. Then $a\df a_0\we a_1\in\ult\cap\BBBB$ and $a\nsmf c_0$, i.e., $a\ll c_0^*$. Consequently, there is $b\in\BBBB$ with $a\ll b\ll c_0^*$, and $a\nsmf b^*$ and $b\nsmf c_0$ follows. Then $b\nin\ultv$ and, thus, $b^*\in\ultv$. Therefore $a\smf b^*$, a contradiction. Hence, $\ult\smf\ultw$, and the proof of (a) is complete.

\medskip

\noindent(b) For $b\in \BBBB$ one clearly has $\ep_A(b)\sbe Z$. Conversely, for $b\in A$ and $\ep_A(b)\sbe Z$, assume that we had $b\nin\BBBB$. Then $b\in A\stm\BBBB=\clu_\infty$. Since, by Lemma \ref{neogrn}, $\clu_\infty$ is a cluster in $A$, there exists an ultrafilter $\ult$ in $A$ such that $b\in\ult\sbe A\stm \BBBB$. Then $\ult\in\ep_A(b)\stm Z$, a contradiction. Therefore $b\in\BBBB$. So, $b\in\BBBB$ if, and only if, $\ep_A(b)\sbe Z$.

\medskip

\noindent(c) For every $\ult\in Z$ there exists $b\in\ult\cap\BBBB$. Then, by (b), $\ult\in\ep_A(b)\sbe Z$, so that $Z$ is open in the space $\Ultsf(A)$. For proving its density
we let $0\neq a\in A$. By (BC3), there exists $0\neq b\in\BBBB$ with $b\ll a$. Then $b\le a$, $\ems\neq\ep_A(b)\sbe\ep_A(a)$, and $\ep_A(b)\sbe Z$ by (b). Thus, $\ep_A(a)\cap Z\nes$, which proves the density of $Z$ in $\Ultsf(A)$.

\medskip

\noindent(d) Writing $p\df p_{A}$ we first show that the map $p$ is irreducible. Suppose that there exists a closed proper subset $F$ of $Z$ such that $p(F)=Y$. Then there exists a closed subset $G$ of $\Ultsf(A)$ such that $G\cap Z=F$. Let $\ult\in Z\stm F$. Since $G$ is compact, there exists $a\in A$ such that $G\sbe \ep_A(a)$ and $\ult\nin\ep_A(a)$. Then $a\neq 1$ and, thus, $a^*\neq 0$. By (BC3), there exists $0\neq b\in\BBBB$ with $b\ll a^*$, i.e., $b\nsmf a$. There exists $\ultv\in\Ult(A)$ such that $b\in\ultv$, and $\ultv\in Z$ follows. Also, $a\nin\ultv$, i.e., $\ultv\nin\ep_A(a)$, which implies $\ultv\in Z\stm F$. Since $p(F)=Y$, there exists $\ultw\in F$ with $p(\ultv)=p(\ultw)$. Then $\ultv\smf\ultw$, and since $a\in\ultw$, we obtain $a\smf b$, a contradiction. Hence, the map $p$ is irreducible.

We now show that, for every $\ult\in Z$, the set $p\inv(p(\ult))$ is closed in $\Ultsf(A)$ and, thus, compact. Note that $p\inv(p(\ult))=\{\ultv\in Z\st \ult\smf\ultv\}$, and let $\ultw\in \Ultsf(A)\stm p\inv(p(\ult))$. We first consider the case $\ult\nsmf\ultw$ (which certainly happens  when $\ultw\in Z$). Then there exist $a\in\ult$ and $c\in\ultw$ such that $a\nsmf c$.
Obviously, $\ultw\in\ep_A(c)$. Suppose that there exists $\ultv\in\ep_A(c)\cap p\inv(p(\ult))$. Then $c\in\ultv$ and, hence, $a\smf c$ follows -- a contradiction. Therefore, $\ultw\in\ep_A(c)\sbe \Ultsf(A)\stm p\inv(p(\ult))$. Considering now the the case $\ult\smf\ultw$ we have  $\ultw\nin Z$. Thus $\ultw\sbe A\stm \BBBB$, and there exists $b_0\in\ult\cap\BBBB$. Since $b_0\ll 1$, (BC1) implies that there exists $b\in\BBBB$ with $b_0\ll b$, i.e., $b_0\nsmf b^*$. Since $b\nin\ultw$, we have $b^*\in\ultw$. Now, from $b_0\in\ult$ and $\ult\smf\ultw$, we obtain that $b_0\smf b^*$, a contradiction. Hence this case is impossible. We conclude that the set $\Ultsf(A)\stm p\inv(p(\ult))$ is open in $\Ultsf(A)$, i.e., the set $p\inv(p(\ult))$ is closed in $\Ultsf(A)$.

For proving that $p$ is perfect, it remains to be shown that $p$ is a closed map. By \cite[Proposition 2.4.9(iii)]{E}, we have to prove that for every open subset $U$ of $Z$, the union $W$ of all equivalence classes that are contained in $U$ is open in $Z$. So, let $U$ be an open subset of $Z$ and $\ult\in Z$ be such that $[\ult]\df p\inv(p(\ult))$ is a subset of $U$. As already shown above, $[\ult]$ is compact. Hence, using (b) and (c), we can find $b_0\in\BBBB$ such that $[\ult]\sbe\ep_A(b_0)\sbe U$, and we have $b_0\in\ult$. Putting
$$V= Z\stm \bigcup\{[\ultv]\st \ultv\in Z\cap\ep_A(b_0^*)\}$$ we obtain $V=Z\stm \bigcup\{[\ultv]\st b_0^*\in\ultv\in Z\}$. For showing $[\ult]\sbe V$ we consider $\ultv\in\ep_A(b_0^*)\cap Z$. Then $b_0\nin\ultv$ and, since $[\ult]\sbe\ep_A(b_0)$, we obtain that every $\ultw$ which belongs to $[\ult]$ contains $b_0$. Thus $\ultv\nin[\ult]$, and $[\ult]\cap[\ultv]=\ems$ follows. Consequently,  $[\ult]\sbe V$.

To prove that $V$ is open, we consider $\ultw\in V$ and suppose that, for every $b\in\ultw\cap\BBBB$, $\ep_A(b)\nsubseteq V$. Then, for every $b\in\ultw\cap\BBBB$, $\ep_A(b)\cap \bigcup\{[\ultv]\st b_0^*\in\ultv\in Z\}\nes$. Hence, for every $b\in\ultw\cap\BBBB$, there exist $\ult_b\in\ep_A(b)$ and $\ultv_b\in\ep_A(b_0^*)\cap Z$ such that $\ult_b\in[\ultv_b]$. This means that for every $b\in\ultw\cap\BBBB$, there exist $\ult_b,\ultv_b\in Z$ such that $b\in\ult_b$, $b_0^*\in\ultv_b$ and $\ult_b\smf\ultv_b$. Hence, for every $b\in\ultw\cap\BBBB$, we have $b_0^*\smf b$. Since $\ultw\cap\BBBB$ is a filter-base for $\ultw$, we obtain that $b_0^*\smf c$ for every $c\in\ultw$. Thus, by Facts \ref{deVriesl}, $b_0^*\in\clu_\ultw$, where $\clu_\ultw$ is a cluster in $A$ (see Definitions \ref{Alexprn} and \ref{boundcl}, and Lemma \ref{Alexprn1}). Using again Facts \ref{deVriesl}, we find a $\ultw\ap\in\Ult(A)$ with $b_0^*\in\ultw\ap\sbe\clu_\ultw$, which implies $\ultw\smf_{\rm\! Al}\ultw\ap$ and then $\ultw\ap\in Z$. Indeed, if $\ultw\ap\nin Z$ then $\ultw\ap\sbe A\stm \BBBB$. Since $\ultw\in Z$, there exists $b_1\in\ultw\cap\BBBB$. Then there exists $b_2\in\BBBB$ such that $b_1\ll_\smf b_2$ (i.e., $b_1\nsmf b_2^*$). Since $b_2\in\BBBB$, we obtain $b_2\nin\ultw\ap$ and, thus $b_2^*\in\ultw\ap$. Since $b_1\in\BBBB$, $b_1\nsmf_{Al} b_2^*$ follows. But this contradicts $b_1\in\ultw$ and $b_2^*\in\ultw\ap$. Hence, $\ultw\ap\in Z$, which implies that $\ultw\smf\ultw\ap$ and, thus,  $[\ultw]=[\ultw\ap]$. Since $\ultw\ap\in\ep_A(b_0^*)\cap Z$, we obtain that $\ultw\in Z\stm V$, a contradiction. Thus, $V$ is open in $Z$.

Finally, we show that $V$ is a subset of the union $W$ of all equivalence classes which are contained in $U$ and, to this end, we consider
$\ultw\in Z$ with $[\ultw]\cap V\nes$. Assume that we had $[\ultw]\nsubseteq U$; then, since $\ep_A(b_0)\sbe U$, we obtain that $[\ultw]\nsubseteq\ep_A(b_0)$. Hence, there exists $\ultw\ap\in[\ultw]\cap(Z\stm\ep_A(b_0))$. Then $\ultw\ap\in\ep_A(b_0^*)$ and, therefore, $b_0^*\in\ultw\ap\in Z$. Thus, $[\ultw]=[\ultw\ap]\sbe Z\stm V$, i.e., $[\ultw]\cap V=\ems$, a contradiction. Hence, $[\ultw]\sbe U$. So, we have proved that for every $\ult\in W$ there exists an open $V$ such that $\ult\in V\sbe W$. Therefore, $W$ is open. This implies that $p$ is a closed map, and we conclude that $p$ is a perfect irreducible map.

Since $Z$ is a locally compact Hausdorff space and $p:Z\to Y$ is perfect, \cite[Theorems 3.7.20, 3.7.21]{E} imply that $Y$ is locally compact Hausdorff as well.
\sqs

\begin{pro}\label{Ul equivalencel}
The functor $U$ has a right inverse $W:{\bf DBoo}\to {\sf C}({\cal A},{\cal P},{\cal X})$ with $W\circ U\cong \rm{Id}$. In particular, $U$ is an equivalence of categories.	
\end{pro}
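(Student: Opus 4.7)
The plan is to exploit Proposition \ref{eqrel} directly. Define $W$ on objects by $W(A,\smf,\BBBB)\df (A,Z_A,p_A)$, where $Z_A\df\BUlt(A)$ is regarded as a subspace of $\Ultsf(A)$ and $p_A\colon Z_A\to Y_A\df Z_A/\smf$ is the natural quotient map; Proposition \ref{eqrel} guarantees that $(A,Z_A)\in|\LZCB|$, that $Z_A$ is extremally disconnected and locally compact Hausdorff, and that $p_A\in\PP$. On morphisms, given $\varphi\colon (A_1,\smf_1,\BBBB_1)\to(A_2,\smf_2,\BBBB_2)$ in $\DBoo$, set $W(\varphi)\df(\varphi,g,f)$, where $g\colon Z_{A_2}\to Z_{A_1}$ is the corestriction of $\Ultsf(\varphi)|_{Z_{A_2}}$, and $f\colon Y_{A_2}\to Y_{A_1}$ is the unique map satisfying $p_{A_1}\circ g=f\circ p_{A_2}$.

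Three nontrivial checks are needed to make this morphism assignment well-defined: (i) $g$ really does land in $Z_{A_1}$; (ii) $g$ carries $\smf_2$-equivalent ultrafilters to $\smf_1$-equivalent ones, so that $f$ exists on $Y_{A_2}$; (iii) the induced $f$ is continuous. For (i), apply axiom (CLC4) to a bounded $b'\in\ult'\cap\BBBB_2$ to obtain $b\in\BBBB_1$ with $b'\le\varphi(b)$, whence $b\in\varphi\inv(\ult')\cap\BBBB_1$. For (ii), use that $\varphi$ reflects the contact relation, together with the definition of the ultrafilter extension of $\smf_1$. For (iii), $p_{A_2}$ is a perfect continuous surjection onto a Hausdorff space, hence a quotient map. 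Functoriality of $W$ then follows from the contravariance of $\Ultsf$ and the uniqueness of $f$.

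For $U\circ W=\mathrm{Id}_{\DBoo}$, one computes $U(W(A,\smf,\BBBB))=(A,\smf_{p_A},\BBBB_{p_A})$. The equality $\BBBB_{p_A}=\BBBB$ follows from the last clause of Proposition \ref{smfpll} combined with Proposition \ref{eqrel}(b). The implication $\smf_{p_A}\sbe\smf$ is immediate from the definitions. For the reverse, given $a\smf b$, apply axiom (BC2) twice to obtain bounded $a_1\le a$ and $b_1\le b$ with $a_1\smf b_1$; the ultrafilter characterization in Facts \ref{deVriesl} then yields $\ult\ni a_1$ and $\ultv\ni b_1$ with $\ult\smf\ultv$; both are bounded, so $\ult,\ultv\in Z_A$, and Remark \ref{pupv1l} converts $\ult\smf\ultv$ into $p_A(\ult)=p_A(\ultv)$, giving $a\smf_{p_A}b$. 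On morphisms, $U\circ W$ is the identity by construction.

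Finally, for $W\circ U\cong \mathrm{Id}$, let $(A,Z,p\colon Z\to Y)$ be a $\CAPX$-object. The set-theoretic equality $\BUlt(A,\smf_p,\BBBB_p)=Z$ follows from the openness of $Z$ in $\Ultsf(A)$ together with the formula $\BBBB_p=\{a\in A\st \ep_A(a)\sbe Z\}$ of Proposition \ref{smfpll}. By Remark \ref{pupv1l}, $\smf_p$ on $Z$ is exactly the relation $p(\ult)=p(\ultv)$, so $p$ factors through a homeomorphism $h\colon Z/\smf_p\to Y$, and $(\mathrm{id}_A,\mathrm{id}_Z,h)\colon(A,Z,p)\to W(U(A,Z,p))$ is a $\CAPX$-isomorphism, with naturality in $(A,Z,p)$ being routine. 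Combined with $U\circ W=\mathrm{Id}$, this shows $U$ is an equivalence of categories. I expect the principal obstacle to be the reverse implication in the third paragraph, where axioms (BC2)--(BC3) and the ultrafilter characterization of the contact relation are indispensable; the remaining verifications are routine bookkeeping.
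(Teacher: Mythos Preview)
Your proposal is correct and follows essentially the same approach as the paper: define $W(A,\smf,\BBBB)=(A,\BUlt(A),p_A)$ via Proposition~\ref{eqrel}, verify $U\circ W=\mathrm{Id}$ by checking $\smf=\smf_{p_A}$ and $\BBBB=\BBBB_{p_A}$, and obtain $W\circ U\cong\mathrm{Id}$ from $\BUlt(A,\smf_p,\BBBB_p)=Z$ together with Remark~\ref{pupv1l}. One small point worth noting: your verification of~(ii) is actually cleaner than the paper's---you simply observe that for $c\in\varphi^{-1}(\ult')$ and $d\in\varphi^{-1}(\ultv')$ one has $\varphi(c)\smf_2\varphi(d)$ and hence $c\smf_1 d$ by contact-reflection, whereas the paper argues by contradiction through Hausdorffness of $Y_{A_1}$ and the already-established identity $\smf_1=\smf_{p_{A_1}}$.
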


\doc For $(A,\smf,\BBBB)\in|\DBoo|$ one defines
$W(A,\smf,\BBBB)= (A,Z,p),$
with $Z=\BUlt(A)$ and $p:Z\to Z/\smf$ the natural quotient map; as above, $\smf$ denotes the contact relation on $A$, as well as its extension to $\Ult(A)$. In Proposition \ref{eqrel} we proved that $\smf$ is an equivalence relation on $Z$, $p$ is an irreducible perfect map, and that $(A,Z)$ is in $\LZCB$. Hence, $W$ is well-defined on objects. Next we show $ U(W(A,\smf,\BBBB))=(A,\smf,\BBBB)$. Indeed, by Proposition \ref{U}, we have
$U(A,Z,p)=(A,\smf_p,\BBBB_{p})$,  and we must prove $\smf=\smf_p$ and $\BBBB=\BBBB_{p}$. For $a,b\in A$ we have
$a\smf_p b\iff (\ex \ult,\ultv\in Z$ with $a\in\ult,\ b\in\ultv$ and $p(\ult)=p(\ultv))\iff (\ex \ult,\ultv\in Z$ with $a\in\ult,\ b\in\ultv$ and $\ult\smf\ultv$). Hence, $a\smf_p b$ implies $a\smf b$. Conversely, if $a\smf b$, then, by (BC2), there exist $a_1\le a$ and $b_1\le b$ such that $a_1,b_1\in\BBBB$ and $a_1\smf b_1$. Then, by Facts \ref{deVriesl}, there exists $\ult,\ultv\in\Ult(A)$ such that $a_1\in\ult$, $b_1\in\ultv$ and $\ult\smf\ultv$. Clearly, this implies $\ult,\ultv\in Z$, $a\in\ult$, $b\in\ultv$,  $p(\ult)=p(\ultv)$ and, hence, $a\smf_p b$. We conclude that $\smf=\smf_p$. To prove that  $\BBBB=\BBBB_{p}$, using Propositions \ref{eqrel}(b) and \ref{smfpll} we obtain  $b\in\BBBB\iff \ep_A(b)\sbe Z\iff b\in \BBBB_{p}$. So, $U(W(A,\smf,\BBBB))=(A,\smf,\BBBB)$.

Computing $U\circ W$ on a $\DBoo$-morphism
$\p:A\to A\ap=(A\ap,\smf',\BBBB\ap)$
we write $W(A\ap)=(A\ap,Z\ap,p\ap)$ and first show that $\ult\ap\in Z\ap$ implies $\p\inv(\ult\ap)\in Z$.  Since $\ult\ap$ is bounded, there exists $b\ap\in\ult\ap\cap\BBBB\ap$. Then there exists $b\in\BBBB$ such that $b\ap\le\p(b)$. Thus $\p(b)\in\ult\ap$, which implies that $b\in\p\inv(\ult\ap)\cap\BBBB$. Hence, the ultrafilter $\p\inv(\ult\ap)$ is bounded, and we have $\Ultsf(\p)|Z\ap:Z\ap\to Z$.

We now show that, if $\ult\ap,\ultv\ap\in\BUlt(A\ap)$, then $\ult\ap\smf'\,\ultv\ap$ implies $\p\inv(\ult\ap)\smf\p\inv(\ultv\ap)$. Assuming that we had $\p\inv(\ult\ap)\nsmf\p\inv(\ultv\ap)$ we put $\ult\df\p\inv(\ult\ap)$ and
$\ultv\df\p\inv(\ultv\ap)$.  Then $p(\ult)\neq p(\ultv)$ and $p\ap(\ult\ap)= p\ap(\ultv\ap)$. Since the codomain $Y$ of $p$ is a Hausdorff space (see Proposition \ref{eqrel}(d)), there exist disjoint neighbourhoods $O_\ult,O_\ultv$ of, respectively, $p(\ult)$ and  $p(\ultv)$. Then there exist $a\in\ult$ and $b\in\ultv$ such that $\ult\in\ep_A^Z(a)\sbe p\inv(O_\ult)$ and $\ultv\in\ep_A^Z(b)\sbe p\inv(O_\ultv)$. Thus $p(\ep_A^Z(a))\cap p(\ep_A^Z(b))=\ems$,
which implies that $a\nsmf_p b$ and, therefore, $a\nsmf b$ (because $U(W(A,\smf,\BBBB))=(A,\smf,\BBBB)$). This implies  $\p(a)\nsmf\ap\p(b)$ and, since $\p(a)\in\ult\ap$, $\p(b)\in\ultv\ap$ and $\ult\ap\smf'\,\ultv\ap$, we obtain $\p(a)\smf'\,\p(b)$, a contradiction. Therefore, $\p\inv(\ult\ap)\smf\p\inv(\ultv\ap)$.
Now, with
$$Y'=Z\ap/\!\smallfrown\ap\mbox{ and }g_\p\df \Ultsf(\p)|Z\ap,$$ one obtains a uniquely determined continuous map $f_{\varphi}$ making the diagram
\begin{center}
$\xymatrix{Z\ar[d]_{p} & Z\ap\ar[l]_{g_\p}\ar[d]^{p\ap}\\
            Y & Y'\ar[l]_{f_{\varphi}}}$
            \end{center}
commute; explicitly,
$f_{\varphi}(p\ap(\mathfrak u'))=p(\varphi^{-1}(\mathfrak u'))$, for all $\mathfrak u'\in{\sf Ult}(A').$
Obviously then, with
$$W\varphi=(\varphi,g_\p,f_{\varphi}),$$ we obtain a well-defined functor $W$. Since, by the definition of the functor $U$, $U(W\varphi)=\varphi$ for all $\varphi$, one has $U\circ W={\rm Id}_{\bf DBoo}$.

Finally, we show $W\circ U\cong {\rm Id}_{\CAPX}$ and consider
 $(A,Z,p)\in|{\sf C}({\cal A},{\cal P},{\cal X})|$. Then $W(U(A,Z,p))=W(A,\smf_p,\BBBB_{p}) =(A,Z\ap,p\ap)$, where $$Z\ap=\{\ult\in\Ult(A)\st \ult\cap\BBBB_{p}\nes\}$$ and $p\ap:Z\ap\to Z\ap/\smf_p$ is the natural quotient mapping.
 We first show that $Z\equiv Z\ap$ and consider $\ult\in Z$. Then there exists $a\in\ult$ with $\ult\in\ep_A(a)\sbe Z$ since $Z$ is open in $\Ultsf(A)$.
 By Proposition \ref{smfpll}, $a\in \BBBB_{p}$ and thus $\ult\in Z\ap$, which gives $Z\sbe Z\ap$. Conversely, for $\ult\in Z\ap$ one has $\ult\cap \BBBB_{p}\nes$ and, therefore, obtains an element $a\in \ult\cap \BBBB_{p}$. Hence, $\ult\in\ep_A(a)\sbe Z$ and, thus, $\ult\in Z$. This shows $Z\equiv Z\ap.$
 Now, denoting by $Y$ the codomain of $p$ and writing $Y\ap$ for $Z/\smf_p$, one obtains a homeomorphism $h_p$ making the diagram
 \begin{center}
$\xymatrix{Z\ar[d]_{p\ap} & Z\ar[l]_{\quad 1_Z}\ar[d]^{p}\\
           Y\ap  & Y\ar[l]_{\quad h_{p}}}$
            \end{center}
commute. Indeed,  for every $\ult\in Z$, set $[\ult]=(p\ap)\inv(p\ap(\ult))$; then put $h_p(p(\ult))\df [\ult]$. The mapping $h_p$ is well-defined because, by Remark \ref{pupv1l}, for every $\ult,\ultv\in Z$, $p(\ult)= p(\ultv)\iff \ult\smf_p\ultv$, i.e., $p(\ult)= p(\ultv)\iff [\ult]=[\ultv]$. Also, defining $h_p\ap:Y\ap\to Y$ by $h_p\ap([\ult])\df p(\ult)$ for every $\ult\in Z$, we see that $h_p\ap$ is well defined and is inverse to $h_p$. Clearly, $h_p$ and $h_p\ap$ are continuous. Thus $h_p$ is a homeomorphism.
  Now we may define
$$\lambda_{(A,Z,p)}= (1_A,1_Z,h_p\inv)
  : (A,Z,p)\to W(U(A,Z,p))$$
and, noting that $h_p$ is a homeomorphism, obtain that  $\lambda_{(A,Z,p)}$ is an isomorphism in ${\sf C}({\cal A},{\cal P},{\cal X})$. One easily sees that the isomorphism is natural in $(A,Z,p)$, so that  $W\circ U\cong Id_{\CAPX}$ follows.
\sqs

\begin{lm}\label{U2l}
For all morphisms $(\varphi,g,f),(\psi,g_1,f_1):(A,Z,p)\to(A_1,Z_1,p_1)$ in ${\sf C}({\cal A},{\cal P},{\cal X})$, one has $f=f_1$ if, and only if, $(U(\varphi,g,f))\cuk=(U(\psi,g_1,f_1))\cuk$ (see Definition \ref{dhc} for the notation $h\cuk$).
 \end{lm}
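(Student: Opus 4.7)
Since $U(\p,g,f)=\p$ and $U(\psi,g_1,f_1)=\psi$, the claim reads $f=f_1\Longleftrightarrow\p\cuk=\psi\cuk$, both $(\cuk)$-operations being taken relative to the common local contact algebra $(A,\smf_p,\BBBB_p)$ supplied by Proposition~\ref{smfpll}. The plan is to compute $\bar{\ep}_{A_1}^{Z_1}(\p\cuk(a))$ as an explicit clopen subset of $Z_1$ and exhibit it as depending on $\p$ only through $f$; both implications will then fall out.

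Using the $\CLCA$-isomorphism $\rho_p\circ\bar{\ep}_A^Z:(A,\smf_p,\BBBB_p)\to(\rc(Y),\smf_Y,\CR(Y))$, write $F_a\df p(\bar{\ep}_A^Z(a))\in\rc(Y)$; the index set $\{b\in\BBBB_p:b\ll a\}$ of the join defining $\p\cuk(a)$ then corresponds to $\{b\in A:F_b\in\CR(Y),\,F_b\sbe\int_Y F_a\}$, and for any such $b$ we have $\ep_A(b)\sbe Z$. Since $\bar{\ep}_{A_1}^{Z_1}:A_1\to\co(Z_1)$ is a Boolean isomorphism (carrying joins to closures of unions) and $\bar{\ep}_{A_1}^{Z_1}(\p(b))=\Ultsf(\p)\inv(\ep_A(b))\cap Z_1=g\inv(\ep_A(b))$, the definition of $\p\cuk$ yields
\[
\bar{\ep}_{A_1}^{Z_1}(\p\cuk(a))\;=\;\cl_{Z_1}\Bigl(g\inv\Bigl(\textstyle\bigcup\{\ep_A(b):b\in\BBBB_p,\,b\ll a\}\Bigr)\Bigr).
\]

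The main technical step is the set identity $\bigcup\{\ep_A(b):b\in\BBBB_p,\,b\ll a\}=p\inv(\int_Y F_a)$ inside $Z$. Inclusion ``$\sbe$'' is immediate from $p(\ep_A(b))=F_b\sbe\int_Y F_a$. For ``$\spe$'', given $z\in p\inv(\int_Y F_a)$, local compactness of $Y$ supplies a compact regular closed $K\sbe\int_Y F_a$ with $p(z)\in\int_Y K$; Facts~\ref{creql} then provides $b\in\BBBB_p$ with $F_b=K$ (hence $b\ll a$), and the relation $\ep_A(b)=\cl_Z(p\inv(\int_Y F_b))\spe p\inv(\int_Y K)$ (Alexandroff's theorem for $p$) places $z$ inside $\ep_A(b)$. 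Combining this with the commutation $g\inv\circ p\inv=p_1\inv\circ f\inv$ (which follows from $p\circ g=f\circ p_1$) collapses the display to
\[
\bar{\ep}_{A_1}^{Z_1}(\p\cuk(a))\;=\;\cl_{Z_1}\bigl(p_1\inv(f\inv(\int_Y F_a))\bigr),
\]
with the analogous formula holding for $\psi\cuk(a)$ in terms of $f_1$.

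The forward implication $f=f_1\Rightarrow\p\cuk=\psi\cuk$ is now immediate. For the converse, I would apply the Alexandroff isomorphism $\rho_{p_1}$ from Facts~\ref{projectivel} (noting that the right-hand side above is regular closed in the extremally disconnected space $Z_1$ and that $p_1$ is a closed surjection): this gives $\rho_{p_1}(\bar{\ep}_{A_1}^{Z_1}(\p\cuk(a)))=\cl_{Y_1}(f\inv(\int_Y F_a))=\rcl(f)(F_a)$, and similarly for $f_1$. From $\p\cuk=\psi\cuk$ we then obtain $\rcl(f)(F_a)=\rcl(f_1)(F_a)$ for every $a\in A$; since $a\mapsto F_a$ exhausts $\rc(Y)$, this yields $\rcl(f)=\rcl(f_1)$, whence $f=f_1$ by faithfulness of the functor $\rcl$ (Theorem~\ref{lccont}). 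The principal obstacle I anticipate is the ``$\spe$'' half of the topological identity, which requires combining local compactness of $Y$ with the Alexandroff/Ponomarev description of $\rho_p$ to manufacture a bounded $b\ll a$ passing through a prescribed preimage point.
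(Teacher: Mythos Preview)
Your argument is correct, but it takes a genuinely different route from the paper's proof. The paper argues both directions directly at the level of ultrafilters and the contact relation $\smf_p$: for $f=f_1\Rightarrow\p\cuk=\psi\cuk$ it shows that $\p^{-1}(\ult')\smf_p\psi^{-1}(\ult')$ for all $\ult'\in Z_1$ and then, given $b\in\BBBB_p$ with $b\ll_p a$, produces by hand some $c\in\BBBB_p$ with $c\ll_p a$ and $\p(b)\le\psi(c)$; the converse is a short contradiction argument, again via $\p^{-1}(\ult')\smf_p\psi^{-1}(\ult')$. Your approach instead computes $\bar{\ep}_{A_1}^{Z_1}(\p\cuk(a))$ topologically, establishes the key identity $\bigcup\{\ep_A(b):b\in\BBBB_p,\,b\ll a\}=p^{-1}(\int_Y F_a)$, and thereby obtains the closed formula $\rho_{p_1}(\bar{\ep}_{A_1}^{Z_1}(\p\cuk(a)))=\rcl(f)(F_a)$. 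This is more structural: it shows explicitly that $\p\mapsto\p\cuk$ factors through the topological data $f$, and it recovers (a piece of) Lemma~\ref{mainlml} along the way. The paper's proof, in contrast, is self-contained and purely order-theoretic.

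One methodological caveat: in the converse direction you invoke faithfulness of $\rcl$ by citing Theorem~\ref{lccont}. Since Lemma~\ref{U2l} is a building block for the paper's \emph{alternative} proof of that very theorem (via Theorem~\ref{md} and Theorem~\ref{newlcdth}), this citation introduces a circularity in the overall program. The fix is easy---faithfulness of $\rcl$ on $\HLC$ is an elementary topological fact (if $f(y')\neq f_1(y')$, separate the images by regular closed sets and observe that $\cl(f_1^{-1}(\int G))\subseteq f_1^{-1}(G)$)---but you should prove it directly rather than appeal to the full duality.
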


\doc Since $U(\varphi,g,f)=\p$, we will simply write $\p\cuk$ instead of $(U(\varphi,g,f))\cuk$ (and the same for $\psi\cuk$).

 Under the hypothesis $f=f_1$ one has $p\circ g=f\circ \psi=f_1\circ \psi=p\circ g_1$, which means that, for all $\mathfrak u'\in Z_1$, $p(\p\inv(\ult\ap))=
p(\psi\inv(\ult\ap))$ and thus
$\varphi^{-1}(\mathfrak u')\smallfrown_p\psi^{-1}(\mathfrak u')$ (in the notation of Proposition \ref{smfpll}). To show $\varphi\cuk(a)\leq\psi\cuk(a)$ for $a\in A$, it suffices to prove that if
$b\in\BBBB_{p}$ and
$b\ll_p a$, then there is some $c\in\BBBB_{p}$ with $c\ll_{p} a$ and $\varphi(b)\leq\psi(c)$. Assuming the opposite, we obtain that there exists  $b_0\in\BBBB_{p}$ with
$b_0\ll_p a$ such that every $c\in\BBBB_{p}$ with $c\ll_{p} a$ satisfies  $\varphi(b_0)\nleq\psi(c)$. There exists
some $c\in \BBBB_{p}$ with $b_0\ll_p c\ll_p a$. Then $\varphi(b_0)\not\leq\psi(c)$. This implies $d\df\varphi(b_0)\wedge\psi(c^*)>0$. Then, by (BC3), there exists  $b_1\in \BBBB_{p_1}$ such that $b_1\le d$. Now
we find $\mathfrak u'\in {\Ult}(A_1)$ containing $b_1$. Then  $d\in\ult\ap$ and $\ult\ap\in Z_1$ (see the equality $``Z=Z\ap$" in the last paragraph of the proof of Proposition \ref{Ul equivalencel}). Consequently, $b_0\in\varphi^{-1}({\mathfrak u'})$ and $c^*\in\psi^{-1}({\mathfrak u'})$. But then $b_0\smallfrown_p c^*$, which means $b\not\ll_p c$, a contradiction. Thus  $\varphi\cuk(a)\leq\psi\cuk(a)$ for all $a\in A$.  Since $\psi\cuk(a)\leq\varphi\cuk(a)$ follows by symmetry, the identity $\varphi\cuk=\psi\cuk$ is confirmed.

Conversely, let $\varphi\cuk=\psi\cuk$. Since $p_1$ is surjective, to prove $f=f_1$, it suffices to show  $p\circ g=p\circ g_1$, i.e., that $\varphi^{-1}(\mathfrak u')\smallfrown_p\psi^{-1}(\mathfrak u')$ for all $\mathfrak u'\in Z_1$
 (see Remark \ref{pupv1l}).
 Assuming $\varphi^{-1}(\mathfrak u')\not\smallfrown_p\psi^{-1}(\mathfrak u')$ for some $\mathfrak u'\in Z_1$, we obtain $b\in\varphi^{-1}(\mathfrak u')$ and $a\in A$ with  $a^*\in\psi^{-1}(\mathfrak u')$ and $b\not\smallfrown_p a^*$, which means $\varphi(b),\psi(a^*)\in\mathfrak u'$ and $b\ll_p a$. Since $\ult\ap\in Z_1$
 and $Z_1=\{\ult\in\Ultsf(A_1)\st \ult\cap\BBBB_{p_1}\nes\}$
   (see again the proof of Proposition \ref{Ul equivalencel}), we find a $b\ap\in\ult\ap\cap\BBBB_{p_1}$. Then there exists $b_0\in\BBBB_{p}$ such that $b\ap\le\p(b_0)$. Thus $\p(b_0)\in\ult\ap$ and, hence, $b_0\in\p\inv(\ult\ap)$. Therefore, $b_2\df b_0\we b\in\BBBB_{p}\cap\p\inv(\ult\ap)$ and $b_2 \ll_p a$.
 Further, with the monotonicity of $\psi$ one has $\psi(a)\ge\bigvee\{\psi(c)\st c\in\BBBB_{p}, c\ll_p a\}=\psi\cuk(a)$ and, thus, $\psi(a^*)\le (\psi\cuk(a))^*$. Since $\varphi(b_2),\psi(a^*)\in\mathfrak u'$, we obtain $0\neq\p(b_2)\we\psi(a^*)\le\p(b_2)\we  (\psi\cuk(a))^*$. Therefore, $\p(b_2)\nleq\psi\cuk(a)$, and since $\p(b_2)\le \p\cuk(a)=\psi\cuk(a)$, we have  a contradiction. Thus, $f=f_1$.
 \sqs

\begin{constr}\label{quotientfunctors1}
\rm
With the compatible equivalence relation $\sim$ on ${\sf C}({\cal A},{\cal P},{\cal X})$ as defined in Proposition \ref{Fed applicationl}, and with $\backsim$ denoting the equivalence relation on {\bf DBoo} defined by  $\varphi\backsim\psi\iff \varphi\cuk=\psi\cuk$), the assertion of Lemma \ref{U2l} reads as $$(\varphi,g,f)\sim(\psi,g_1,f_1)\iff U(\varphi,g,f))\backsim U(\psi,g_1,f_1).$$ Denoting by $\langle\varphi\rangle$ the $\backsim$-equivalence class of a morphism $\varphi$ in {\bf DBoo}, the functor $U$  therefore induces the faithful functor
$${\cal B}=\xymatrix{{\sf C}({\cal A},{\cal P},{\cal X})/\!\sim\ar[r]^{\quad\overline{U}} & {\bf DBoo/\!\!\backsim}},$$
mapping objects like $U$, and morphisms by $[\varphi,g,f]\mapsto \langle\varphi\rangle$.
\end{constr}

We can now state our new duality theorem for $\HLC$:

\begin{theorem}\label{md} 
The functor $\overline{U}$ is an equivalence of categories. As a consequence, $\bf LKHaus$ is dually equivalent to the category
$\bf DBoo/\!\!\backsim$,
 whose objects are complete local contact algebras and whose morphisms are equivalence classes of\/ $\DBoo$-morphisms, to be composed by ordinary map composition of their representatives.
\end{theorem}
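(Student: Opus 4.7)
The plan is to reduce the theorem to three properties of $\overline{U}$: essential surjectivity, fullness, and faithfulness. These will combine with Proposition \ref{Fed applicationl} to give the claimed dual equivalence, since once $\overline{U}: \CAPX/\!\sim \;\to\; \DBoo/\!\!\backsim$ is an equivalence, its opposite composed with the dual equivalence $({\sf C}({\cal A},{\cal P},{\cal X})/\!\sim)^{\rm op}\simeq\LKHaus$ of Proposition \ref{Fed applicationl} yields $\LKHaus \simeq (\DBoo/\!\!\backsim)^{\rm op}$.

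Essential surjectivity and faithfulness are essentially already done. For any $(A,\smf,\BBBB)\in |\DBoo|$, Proposition \ref{Ul equivalencel} delivers $W(A,\smf,\BBBB)\in|\CAPX|$ with $U(W(A,\smf,\BBBB))=(A,\smf,\BBBB)$ on the nose, so the same equality holds under $\overline{U}$ at the level of objects. Faithfulness is Lemma \ref{U2l} read in the direction ``$\varphi\cuk=\psi\cuk$ implies $f=f_1$'': if $\overline{U}[\alpha,g,f]=\overline{U}[\beta,g_1,f_1]$, then $\alpha\backsim\beta$, hence $f=f_1$, hence $[\alpha,g,f]=[\beta,g_1,f_1]$.

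Fullness is the one point requiring genuine (though modest) work. Given $X=(A,Z,p)$ and $Y=(A_1,Z_1,p_1)$ in $|\CAPX|$ and a $\DBoo$-morphism $\varphi:UX\to UY$, I will produce a $\CAPX$-morphism $\tilde\varphi:X\to Y$ with $U\tilde\varphi=\varphi$; then $\overline{U}[\tilde\varphi]=\langle\varphi\rangle$. The idea is to transport $\varphi$ through the natural isomorphism $WU\cong{\rm Id}_{\CAPX}$ from Proposition \ref{Ul equivalencel}. Concretely, the components $\lambda_X=(1_A,1_Z,h_p^{-1})$ and $\lambda_Y=(1_{A_1},1_{Z_1},h_{p_1}^{-1})$ are $\CAPX$-isomorphisms whose $U$-images are identity Boolean homomorphisms, and the proof of Proposition \ref{Ul equivalencel} constructs a $\CAPX$-morphism $W\varphi=(\varphi,g_\varphi,f_\varphi):WUX\to WUY$ with $U(W\varphi)=\varphi$. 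Setting $\tilde\varphi:=\lambda_Y^{-1}\circ W\varphi\circ\lambda_X$ in $\CAPX(X,Y)$ and applying $U$ then gives $U\tilde\varphi=\varphi$, as required. The one thing to double-check here is that the argument in the proof of Proposition \ref{Ul equivalencel} establishing $g_\varphi$ and $f_\varphi$ (namely, that $\varphi$ sends bounded ultrafilters to bounded ultrafilters, and that $\smf$-equivalent bounded ultrafilters have $\smf$-equivalent preimages) applies to $W\varphi$ because $WUX$ and $WUY$ are the canonical bounded-ultrafilter realizations of $UX$ and $UY$. That is exactly the content of the identification $Z=\BUlt(U(A,Z,p))$ established along the way in the proof of Proposition \ref{Ul equivalencel}, so the required properties of $\varphi$ are hypothesized in its being a $\DBoo$-morphism.

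I expect no substantial obstacle beyond this bookkeeping: the work was essentially done in Propositions \ref{Ul equivalencel} and Lemma \ref{U2l}, and the present theorem is a clean passage to the quotient, stating that the equivalence relations $\sim$ on $\CAPX$-morphisms and $\backsim$ on $\DBoo$-morphisms correspond precisely under $U$. Once $\overline{U}$ is verified to be an equivalence, the dual equivalence ${\bf LKHaus}\simeq({\bf DBoo}/\!\!\backsim)^{\rm op}$ follows from Proposition \ref{Fed applicationl} without further argument.
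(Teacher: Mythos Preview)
Your proposal is correct and takes essentially the same approach as the paper. The paper's proof simply observes that $W$ induces a functor $\overline{W}:\DBoo/\!\backsim\to\CAPX/\!\sim$ (well-definedness being the other direction of Lemma~\ref{U2l}) which inherits $\overline{U}\circ\overline{W}={\rm Id}$ and $\overline{W}\circ\overline{U}\cong{\rm Id}$ from Proposition~\ref{Ul equivalencel}; your full/faithful/essentially-surjective argument unpacks exactly the same ingredients, with your fullness witness $\tilde\varphi=\lambda_Y^{-1}\circ W\varphi\circ\lambda_X$ being the quasi-inverse $\overline{W}$ conjugated by the natural isomorphism $\lambda$.
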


\doc
Just as $U$ induces the functor $\overline{U}$, the functor $W$ of Proposition \ref{Ul equivalencel} induces the functor
 $$\overline{W}:{\bf DBoo}/\!\backsim\,\longrightarrow {\sf C}({\cal A},{\cal P},{\cal X})/\!\sim,\quad \langle \varphi\rangle\mapsto [\varphi,g_\p,f_{\varphi}].$$
It maps objects as $W$ does and still satisfies $\overline{U}\circ\overline{W}={\rm Id}$ and $\overline{W}\circ\overline{U}\cong{\rm Id}$. Hence, $\overline{U}$ is an equivalence of categories, with quasi-inverse $\overline{W}$. The claimed duality now follows with Proposition \ref{Fed applicationl}.
\sqs


\section{A new proof of Dimov's extension of the de Vries duality theorem to the category $\HLC$}

In order to show that {\bf LKHaus} is dually equivalent to the category {\bf CLCA}
(see Definition \ref{dhc}),
according to Theorem \ref{md}, we just need to exhibit {\bf CLCA} as equivalent to the category ${\DBoo/\!\!\backsim}$ of Construction \ref{quotientfunctors1}.

\begin{pro}\label{V}
There is a functor
\begin{center}
$\xymatrix{{\bf DBoo}\ar[rr]^{V} & &{\bf CLCA},}$
\end{center}
which maps objects identically and sends a morphism $\varphi:(A,\smf,\BBBB)\lra (A\ap,\smf',\BBBB\ap)$ in {\bf DBoo} to
$\varphi\cuk$.
\end{pro}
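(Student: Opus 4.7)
The plan is to verify that, for every $\DBoo$-morphism $\varphi:(A,\smf,\BBBB)\to(A',\smf',\BBBB')$, the map $\varphi\cuk$ satisfies the axioms (CLC1)--(CLC5) of Definition \ref{dhc}, and then to confirm that the assignment $\varphi\mapsto\varphi\cuk$ respects identities and composition.

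Four of the five conditions are relatively routine. Condition (CLC1) is immediate from $\varphi(0)=0$. For (CLC2), one combines three facts: $\BBBB$ is a meet-closed ideal; $c_1\ll a$ and $c_2\ll b$ imply $c_1\wedge c_2\ll a\wedge b$ (an easy consequence of C4); and binary meets distribute over arbitrary joins in any complete Boolean algebra. Condition (CLC4) follows by combining the $\DBoo$-clause that supplies, for each $b'\in\BBBB'$, an element $b\in\BBBB$ with $b'\le\varphi(b)$, with one application of (BC1) to $b\ll 1$, producing a bounded $a\in\BBBB$ with $b\ll a$; then $b'\le\varphi(b)\le\varphi\cuk(a)$. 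Condition (CLC5) follows from the monotonicity of $\varphi\cuk$ together with a single application of (BC1) to interpolate a bounded element between $c\ll a$ (with $c$ bounded) and $a$.

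The main obstacle is (CLC3). Given $a\ll b$ with $a\in\BBBB$, I would apply (BC1) twice to produce bounded $c,d$ with $a\ll d\ll c\ll b$. Since $\varphi$ preserves $\ll$, one has $\varphi(d)\ll'\varphi(c)$, while $\varphi(c)\le\varphi\cuk(b)$ directly from the definition; hence $\varphi(d)\ll'\varphi\cuk(b)$ by I2. It then suffices to show $(\varphi\cuk(a^*))^*\le\varphi(d)$. The key observation is that for any $\beta\in\BBBB$, the element $d^*\wedge\beta$ again lies in $\BBBB$ (ideal property) and satisfies $d^*\wedge\beta\le d^*\ll a^*$, so $\varphi(d^*\wedge\beta)=\varphi(d)^*\wedge\varphi(\beta)$ contributes to the join defining $\varphi\cuk(a^*)$. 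Taking the join over $\beta\in\BBBB$ and using distributivity yields
$$\varphi\cuk(a^*)\ge\varphi(d)^*\wedge\bigvee\{\varphi(\beta):\beta\in\BBBB\},$$
so the task reduces to showing $\bigvee\{\varphi(\beta):\beta\in\BBBB\}=1$. A consequence of (BC3) is that $\bigvee\BBBB'=1$ in $A'$, and every $b'\in\BBBB'$ is dominated by some $\varphi(b)$ with $b\in\BBBB$ by the $\DBoo$-condition; hence the join in question is at least $\bigvee\BBBB'=1$.

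For functoriality, $V$ preserves identities because, by (BC3), every $a\in A$ equals $\bigvee\{b\in\BBBB:b\ll a\}$, so $\mathrm{id}_A\cuk=\mathrm{id}_A$, and $\mathrm{id}_A$ is the $\CLCA$-identity of $A$ since (CLC5) gives $\varphi\cuk=\varphi$ for every $\CLCA$-morphism $\varphi$, whence $\mathrm{id}_A\diamond\varphi=(\mathrm{id}_A\circ\varphi)\cuk=\varphi\cuk=\varphi$. Preservation of composition reduces to the identity $(\psi\circ\varphi)\cuk=(\psi\cuk\circ\varphi\cuk)\cuk$, which follows by successive applications of parts (a) and (b) of Lemma \ref{assocuk}, exploiting that $\varphi$ and $\psi$ are monotone as Boolean homomorphisms.
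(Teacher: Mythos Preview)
Your proof is correct, and its overall architecture---verify (CLC1)--(CLC5) for $\varphi\cuk$, then establish functoriality via Lemma~\ref{assocuk}---coincides with the paper's. The treatments of (CLC1), (CLC2), (CLC4), (CLC5), identity preservation, and composition are essentially the same as in the paper (your explicit remark that $\mathrm{id}_A$ really is the $\CLCA$-identity is a small addition, but harmless).

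The one genuine difference is in (CLC3). The paper argues by contradiction: assuming $(\varphi\cuk(b))^*\smf'(\varphi\cuk(a))^*$, it invokes (BC2) to pass to a bounded witness $d'\in\BBBB'$, lifts $d'$ to some $d\in\BBBB$ via the $\DBoo$-condition, and then produces specific indices $c_0,e_0$ for which $\varphi(d\wedge c_0^*)\nsmf'\varphi(d\wedge e_0^*)$, contradicting the assumed contact of the two meets. Your argument is direct and purely order-theoretic: interpolate $a\ll d\ll c\ll b$ with $c\in\BBBB$, obtain $\varphi(d)\ll'\varphi\cuk(b)$, and then show $(\varphi\cuk(a^*))^*\le\varphi(d)$ by observing that $d^*\wedge\beta\in\BBBB$ with $d^*\wedge\beta\ll a^*$ for every $\beta\in\BBBB$, whence $\varphi\cuk(a^*)\ge\varphi(d)^*\wedge\bigvee_{\beta\in\BBBB}\varphi(\beta)=\varphi(d)^*$. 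The key ingredient replacing the paper's use of (BC2) is the identity $\bigvee_{\beta\in\BBBB}\varphi(\beta)=1$, which you correctly derive from (BC3) in $A'$ together with the $\DBoo$-condition. Your route avoids the contact relation entirely at this step and is arguably more transparent; the paper's route, on the other hand, stays closer to the contact-algebra axioms (BC1)--(BC2) and does not need the infinite-distributivity step.
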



\doc
For a morphism
$\varphi:A\to A\ap$
in {\bf DBoo}
we must first confirm that the map $\varphi\cuk$
(see Definition \ref{dhc} for this notation) satisfies conditions (CLC1-5) of
In what follows, we will omit the annotation with primes in $\smf$ and $\ll$.
 Condition (CLC1) holds trivially for $\varphi\cuk$. It is straightforward to show that, since $\varphi$ satisfies (CLC2), the map
 $\varphi\cuk$ satisfies (CLC2) and (CLC5).
 Let us  confirm that $\varphi\cuk$ satisfies (CLC3).
 Given $a\in A$, $b^*\in\BBBB$ and $b^*\ll a$,
 we have to show that $(\p\cuk(b))^*\ll\p\cuk(a)$.
  Since $\varphi$ preserves the Boolean negation,
  we have

\medskip

\begin{tabular}{lll}
$(\varphi\cuk(b))^*$&$=(\bigvee\{\varphi(e)\ |\ e\in \BBBB, e\ll b\})^*$ \\
&$=\bigwedge\;\{\varphi(e)^*\ |\ e\in \BBBB,  e\ll b\}$ \\
&$=\bigwedge\;\{\varphi(e^*)\ |\ e\in \BBBB,  e\ll b\}$. \\
 \end{tabular}

 \medskip

\noindent Analogously, we obtain
 $$(\varphi\cuk(a))^*=\bigwedge\;\{\varphi(c^*)\ |\ c\in \BBBB,  c\ll a\}.$$

\noindent  Assume we had $(\p\cuk(b))^*\not\ll\p\cuk(a)$. Then  $(\p\cuk(b))^*\smf(\p\cuk(a))^*$, i.e., $$(\bigwedge\;\{\varphi(e^*)\ |\ e\in \BBBB,  e\ll b\})\smf (\bigwedge\;\{\varphi(c^*)\ |\ c\in \BBBB,  c\ll a\}).$$ Now (BC2) implies that there exists $d\ap\in\BBBB\ap$ such that
 $$(d\ap\we\bigwedge\;\{\varphi(e^*)\ |\ e\in \BBBB,  e\ll b\})\smf (d\ap\we\bigwedge\;\{\varphi(c^*)\ |\ c\in \BBBB,  c\ll a\}),$$ Since $\p$ is a $\DBoo$-morphism, there exists $d\in\BBBB$ with $d\ap\le\p(d)$. Then  $(\p(d)\we\bigwedge\;\{\varphi(e^*)\ |\ e\in \BBBB,  e\ll b\})\smf (\p(d)\we\bigwedge\;\{\varphi(c^*)\ |\ c\in \BBBB,  c\ll a\})$ and thus $$(\bigwedge\;\{\varphi(d\we e^*)\ |\ e\in \BBBB,  e\ll b\})\smf (\bigwedge\;\{\varphi(d\we c^*)\ |\ c\in \BBBB,  c\ll a\}).$$
 There exists $c_0\in\BBBB$ such that $b^*\ll c_0\ll a$. Then $c_0^*\ll b$. Thus $d\we c_0^*\ll b$ and $d\we c_0^*\in\BBBB$. Using (BC1), we find an $e_0\in\BBBB$ such that $d\we c_0^*\ll e_0\ll b$, and $d\we c_0^*\ll e_0\vee d^*$ follows. Since $\p$ preserves the relation $\ll$, we obtain $\p(d\we c_0^*)\ll \p(e_0\vee d^*)$, i.e., $\p(d\we c_0^*)\nsmf \p(d\we e_0^*)$, a contradiction. Therefore, $(\p\cuk(b))^*\ll\p\cuk(a)$. So, condition (CLC3) is satisfied.

 For confirming that $\p\cuk$ satisfies (CLC4), let $b\ap\in\BBBB\ap$. We have to find a $b\in \BBBB$ with $\p\cuk(b)\ge b\ap$. Since $\p$ is a $\DBoo$-morphism, there exists a $b_0\in\BBBB$ such that $\p(b_0)\ge b\ap$. Using the fact that $b_0\ll 1$ and (BC1), we obtain that there exists a $b\in\BBBB$ such that $b_0\ll b$. Then $\p\cuk(b)=\bigvee\{\p(c)\st c\in\BBBB, c\ll b\}\ge\p(b_0)\ge b\ap$. Hence, condition (CLC4) is satisfied. This concludes the proof that $\p\cuk$ is a $\CLCA$-morphism.

For showing that $V$ preserves the identity map ${\rm id}_A$ on $(A,\smf,\BBBB)$, let $a\in A$. With $c\df\p\cuk(a)$ we have $c\le a$. Supposing that $c\neq a$, we obtain $a\we c^*\neq 0$ and thus, by (BC3), there exists $0\neq b\in\BBBB$ such that $b\ll a\we c^*$. Then $b\ll a$ and $b\le c^*$; thus $b\nleq c$, a contradiction. Therefore, $c=a$ showing that $V$ preserves the identity map on $A$.

Finally, we must show that $V$ preserves the composition. For morphisms $\varphi:(A,\smf,\BBBB)\lra (A\ap,\smf',\BBBB\ap)$ and $\psi:(A\ap,\smf',\BBBB\ap)\lra (A'',\smf'',\BBBB'')$ in {\bf DBoo}, we have to show that $V(\psi\circ\p)=V(\psi)\di V(\p)$, i.e. that $(\psi\circ\p)\cuk=\psi\cuk\di \p\cuk$, where $\psi\cuk\di \p\cuk=(\psi\cuk\circ \p\cuk)\cuk$. Indeed, since $\p$ and $\psi\cuk$ are monotone functions,
with Lemma \ref{assocuk} we obtain that $(\psi\circ\p)\cuk=(\psi\cuk\circ\p)\cuk=(\psi\cuk\circ\p\cuk)\cuk$. Hence, $V$ is a well-defined functor.
 \sqs

Unlike $U$, the functor $V$ of Proposition \ref{V} is {\em not}\/ an equivalence of categories. But it induces a functor $\ovl{V}$ which, as we will show, is even an isomorphism of categories.

\begin{constr}\label{quotientfunctors2}
\rm
 The functor $V$ induces a faithful functor
$$\xymatrix{{\bf DBoo}/\!\backsim\ar[rrr]^{\quad\overline{V}} & & &{\bf CLCA}},$$
mapping objects like  $V$, and morphisms by  $\langle\p\rangle\mapsto \p\cuk$.
\end{constr}

Since $\overline{V}$ maps objects identically and is trivially faithful, it suffices to show that $\overline{V}$ is full, in order for us to conclude that it is an isomorphism of categories.
We start by proving a couple of simple lemmata.

\begin{lm}\label{fl1}
Let $A$ be a local contact algebra and $\ult$ be a bounded ultrafilter in $A$. Then $\{a\in A\st\fa b\in\ult: a\smf b\}=\{a\in A\st\fa b\in\ult: a\smf_{\rm\! Al} b\}$ (with $\smf_{\rm\! Al}$ of {\rm Definition \ref{Alexprn}}), i.e., $\clu_\ult=\{a\in A\st\fa b\in\ult: a\smf b)\}$.
\end{lm}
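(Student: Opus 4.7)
\medskip

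The plan is to prove the nontrivial inclusion
\[
\{a\in A \st \forall b\in\ult : a\smf_{\rm\! Al} b\} \;\subseteq\; \{a\in A \st \forall b\in\ult : a\smf b\},
\]
since the reverse inclusion is immediate from the very definition $\smf\;\subseteq\;\smf_{\rm\! Al}$ in Definition \ref{Alexprn}. The second equality $\clu_\ult=\{a\in A\st\forall b\in\ult : a\smf b\}$ then follows at once: by Facts \ref{deVriesl} applied to the normal contact algebra $(A,\smf_{\rm\! Al})$ of Lemma \ref{Alexprn1}, we have $\clu_\ult=\{a\in A\st\forall b\in\ult : a\smf_{\rm\! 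Al} b\}$, which by the first equality coincides with the right-hand side.

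For the nontrivial inclusion I would use boundedness of $\ult$ in an essential way. Fix $a\in A$ with $a\smf_{\rm\! Al} b$ for every $b\in\ult$, and fix an arbitrary $b\in\ult$. Since $\ult$ is bounded, there exists $b_0\in\ult\cap\BBBB$, and since $\BBBB$ is an ideal and $\ult$ is a filter, the element $c\df b\wedge b_0$ lies in $\ult\cap\BBBB$. Hence $a\smf_{\rm\! Al} c$ by hypothesis. Now observe that $c\in\BBBB$ rules out the alternative ``$a,c\in A\setminus\BBBB$'' in Definition \ref{Alexprn}, so necessarily $a\smf c$. Since $c\le b$ and the contact relation is monotone in each argument (an immediate consequence of axiom C4 in \ref{deVriesAlgebraicl}), we conclude $a\smf b$, as required.

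There is essentially no obstacle here: the whole argument hinges on the simple observation that a bounded ultrafilter admits a \emph{bounded} element below any of its members, which forces any Alexandroff contact with such a member to be genuine contact. No appeal to BC1--BC3 is needed beyond the definitional fact that $\BBBB$ is an ideal. The proof will therefore be a short two-step argument, followed by one line identifying the common set with $\clu_\ult$ via Facts \ref{deVriesl}.
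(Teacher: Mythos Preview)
Your proof is correct and follows essentially the same route as the paper's: pick a bounded element $b_0\in\ult\cap\BBBB$, note that $b\wedge b_0\in\ult\cap\BBBB$ for every $b\in\ult$, and use that $a\smf_{\rm Al}(b\wedge b_0)$ together with $b\wedge b_0\in\BBBB$ forces $a\smf(b\wedge b_0)$, hence $a\smf b$ by monotonicity. Your added remark explicitly deriving $\clu_\ult=\{a\in A\st\forall b\in\ult: a\smf b\}$ from Facts~\ref{deVriesl} and Lemma~\ref{Alexprn1} is a welcome clarification but not a different idea.
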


\doc Let $a\in A$ and $a\smf_{\rm\! Al} b$ for all $b\in\ult$. We have to show that $a\smf b$ for all $b\in\ult$. Indeed, pick $b_0\in\ult\cap\BBBB$; then $b\we b_0\in \ult\cap\BBBB$  and, hence, $a\smf_{\rm\! Al} (b\we b_0)$ for every $b\in\ult$. This implies $a\smf (b\we b_0)$ and, thus, $a\smf b$ for every $b\in\ult$. The converse inclusion is obvious.
\sqs

\begin{lm}\label{fl2}
Let $A$ be a local contact algebra and $\ult,\ultv$ be  bounded ultrafilters in $A$.
Then, with the extension of the contact relation to ultrafilters (see Facts \ref{deVriesl}),  one has ($\ult\smf\ultv\iff\clu_\ult=\clu_\ultv$).
\end{lm}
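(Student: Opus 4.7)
The plan is to deduce this equivalence from the corresponding fact in the normal contact algebra $(A,\smf_{\rm\! Al})$ associated with $A$ by Lemma~\ref{Alexprn1}, together with Lemma~\ref{fl1}. Recall that $\clu_\ult$ is, by definition, the cluster generated by $\ult$ in $(A,\smf_{\rm\! Al})$; by Facts~\ref{deVriesl} applied to that normal contact algebra, one has both the description $\clu_\ult=\{a\in A\st \fa b\in\ult: a\smf_{\rm\! Al} b\}$ and the uniqueness statement that $\clu_\ult$ is the only cluster of $(A,\smf_{\rm\! Al})$ containing $\ult$. Since $\ult$ is assumed bounded, Lemma~\ref{fl1} upgrades the first description to $\clu_\ult=\{a\in A\st \fa b\in\ult: a\smf b\}$ in terms of the original contact relation; this will be the crucial bridge between the ``bounded'' and the ``Alexandroff'' worlds.

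For the direction $(\Ra)$, I would assume $\ult\smf\ultv$ and show that $\ultv\sbe\clu_\ult$: for every $d\in\ultv$ and every $b\in\ult$ the assumption gives $d\smf b$, hence in particular $d\smf_{\rm\! Al} b$, and therefore $d\in\clu_\ult$ by the characterization from Facts~\ref{deVriesl}. Since $\ultv$ obviously also sits inside its own cluster $\clu_\ultv$, and since clusters in $(A,\smf_{\rm\! Al})$ containing a given ultrafilter are unique, it follows that $\clu_\ult=\clu_\ultv$.

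For the direction $(\La)$, I would assume $\clu_\ult=\clu_\ultv$ and pick arbitrary $c\in\ult$ and $d\in\ultv$. Then $d\in\ultv\sbe\clu_\ultv=\clu_\ult$, so Lemma~\ref{fl1} applied to the bounded ultrafilter $\ult$ immediately yields $d\smf c$. Since $c\in\ult$ and $d\in\ultv$ were arbitrary, this is precisely the definition of $\ult\smf\ultv$.

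There is no real obstacle here: the lemma is essentially a repackaging of Lemma~\ref{fl1} together with the standard cluster--ultrafilter correspondence in normal contact algebras recalled in Facts~\ref{deVriesl}. The only point to be mindful of is that boundedness of $\ult$ (and by symmetry of $\ultv$) is used precisely to invoke Lemma~\ref{fl1}; without it one could only conclude the weaker statement $\ult\smf_{\rm\! Al}\ultv\iff\clu_\ult=\clu_\ultv$, which does not suffice for Proposition~\ref{eqrel}(a) and the subsequent construction of the quotient map $p_A$.
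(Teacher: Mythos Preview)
Your proof is correct, and the backward direction is essentially identical to the paper's. The forward direction, however, is genuinely different and in fact more economical than the paper's argument.

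For $(\Ra)$ the paper proceeds by contradiction: assuming some $a\in\clu_\ult\setminus\clu_\ultv$, it uses Lemma~\ref{fl1} to find $d\in\ultv$ with $a\nsmf d$, intersects with a bounded element of $\ultv$, and then invokes axiom (BC1) to interpolate a bounded $e_1$ with $e\wedge d\ll e_1\ll a^*$; from $a\in\clu_\ult$ it deduces $e_1^*\in\ult$, contradicting $\ult\smf\ultv$. You bypass this element-chase entirely: you simply observe that $\ult\smf\ultv$ immediately gives $\ultv\subseteq\clu_\ult$ (via the description of $\clu_\ult$ in Facts~\ref{deVriesl}), and then invoke the uniqueness of the cluster containing a given ultrafilter, also from Facts~\ref{deVriesl}, to conclude $\clu_\ult=\clu_\ultv$. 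Your route avoids any explicit appeal to (BC1) and does not even need Lemma~\ref{fl1} for this direction; the paper's argument is more hands-on and self-contained at the level of the contact axioms. Both are valid, but yours is the cleaner deduction from the facts already assembled in Section~\ref{s2nn}.
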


\doc When $\ult\smf\ultv$, we show that $\clu_\ult\sbe\clu_\ultv$. Supposing that there exists $a\in\clu_\ult\stm\clu_\ultv$
 and using Lemma \ref{fl1},
 we find $d\in\ultv$ with $a\nsmf d$. Then, picking $e\in\ultv\cap\BBBB$, we have $e\we d\in\ultv$ and $a\nsmf (e\we d)$. Hence, $e\we d\ll a^*$, so that there exists $e_1\in\BBBB$ such that $e\we d\ll e_1\ll a^*$, i.e., $(e\we d)\nsmf e_1^*$ and $e_1\nsmf a$. Then $e_1\nin\ult$ because otherwise, since $a\in\clu_\ult$, we would have $e_1\smf a$. Now $e_1^*\in\ult$ follows, and since $e\we d\in\ultv$ and $\ult\smf\ultv$, we obtain  $(e\we d)\smf e_1^*$, a contradiction. Hence, $\clu_\ult\sbe\clu_\ultv$, with the reverse implication following by symmetry.

Conversely, if $\clu_\ult=\clu_\ultv$, then $\ult\smf\ultv$ follows from Lemma \ref{fl1} and the trivial inclusions $\ult\sbe\clu_\ult$ and $\ultv\sbe\clu_\ultv$.
\sqs

In the following proposition, the assertions (2) and (3) appeared in
 \cite{D-AMH1-10},
but the proofs given here are new. We use $\bar{\ep}_A^Z$ as defined in Notation \ref{zalgnl}; $\tau_{A}$ and $\bclust(A)$ were defined in the (sketch of the) proof of Theorem \ref{roeperl}, and $p_{A}:{\rm BUlt}(A)\to{\rm BUlt}(A)/\!\smallfrown$ is the irreducible perfect natural quotient map of Proposition \ref{eqrel}(d).

\begin{pro}\label{full propl} {\rm(1)} For every complete local contact algebra $A$, with $Z_A\df \BUlt(A)$, one has the homeomorphism
$$\gamma_{A}:Z_A/\!\smallfrown\;\longrightarrow {\sf BClust}(A),\;[\mathfrak u]\longmapsto\mathfrak c_{\mathfrak u}=\{a\in A \ |\ \forall b\in\mathfrak u:a\smallfrown b\},$$
where $Z_A$ is considered as a  subspace of\/ $\Ultsf(A)$,
as well as the Boolean isomorphism
$$\rho_{\gamma_{A}\circ p_{A}}:{\sf CO}(Z_A)\longrightarrow{\sf RC}( \bclust(A)),\;H\longmapsto\gamma_{A}(p_{A}(H)),$$
which fits into the commutative diagram
\begin{center}
$\xymatrix{&  &A\ar[lld]_{\bar{\ep}_A^{Z_A}}\ar[drr]^{\tau_{A}} &  &\\
{\sf CO}(Z_A)\ar[rrrr]^{\rho_{\gamma_{A}\circ p_{A}}} & & & & \rc(\bclust(A))\,.}$
\end{center}
\noindent In particular, $\bclust(A)$ is a locally compact Hausdorff space.

\bigskip

{\rm (2)} $\tau_{A}: A\lra \rcl(\bclust(A))$ is a  {\bf CLCA}-isomorphism.

\bigskip

{\rm (3)}
For every morphism $\alpha:(A,\smf,\BBBB)\lra (A',\smf',\BBBB\ap)$ in\/ $\bf CLCA$, one has the continuous map
$$\hat{\alpha}:\bclust(A\ap)\lra\bclust(A),\quad {\mathfrak c'}\mapsto \uparrow\!\{b\in \BBBB\ |\ \forall\, a\in \!A\; (\,b\ll a \Longrightarrow \alpha(a)\in\mathfrak c'\,)\},$$
which fits into the commutative diagram
\begin{center}
$\xymatrix{
A\ar[rr]^{\alpha}\ar[d]_{\tau_{A}} & & A\ap\ar[d]^{\tau_{A\ap}}\\
{\sf RCL}(\bclust(A))\ar[rr]^{{\sf RCL}(\hat{\alpha})} & & {\sf RCL}( \bclust(A\ap))\,.\\
}$	
\end{center}
\end{pro}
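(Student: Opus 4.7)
The plan is to establish the three parts in sequence, with Part (1) carrying the main novel content; Part (2) is then essentially a repackaging of Part (1) via Roeper's Representation Theorem, and Part (3) is a computational verification.

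For the map $\gamma_A$ of Part (1), well-definedness and injectivity are immediate from Lemma \ref{fl2}, which gives $\ult\smf\ultv\iff \clu_\ult=\clu_\ultv$ on $Z_A$; surjectivity follows from Proposition \ref{bbcl1}(a), which yields a bounded element $c$ in any given bounded cluster $\clu$, combined with the standard fact from Facts \ref{deVriesl} that every $c\in\clu$ lies in some ultrafilter generating $\clu$ (such $\ult$ is automatically bounded since it contains $c$). Commutativity of the displayed triangle reduces to verifying $\gamma_A(p_A(\epb(a)))=\tau_A(a)$: the inclusion $\sbe$ uses $\ult\sbe\clu_\ult$, while the reverse inclusion applies Proposition \ref{bbcl1}(a) once more to replace $a\in\clu$ by a bounded $c\le a$ in $\clu$, and then extracts a bounded ultrafilter containing $c$ that generates $\clu$. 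Since $\epb$ is a Boolean isomorphism by the Stone-type duality of Theorem \ref{zdualityed1} (applied to the complete lz-algebra $(A,Z_A)$ furnished by Proposition \ref{eqrel}(c)), and $\tau_A$ is a Boolean isomorphism by Theorem \ref{roeperl}(a) for complete $A$, the composite $\tau_A\circ\epb^{-1}$ is a Boolean isomorphism, and the triangle forces it to coincide with $\rho_{\gamma_A\circ p_A}$.

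To finish Part (1), I need $\gamma_A$ to be a homeomorphism between the locally compact Hausdorff spaces $Z_A/\!\smf$ and $\BClust(A)$. Writing $q=\gamma_A\circ p_A$, the identity $q^{-1}(\tau_A(a))=p_A^{-1}(p_A(\epb(a)))$ (obtained by reusing the surjectivity argument) exhibits this preimage as closed in $Z_A$, since $p_A$ is closed by Proposition \ref{eqrel}(d); hence $q$ is continuous, and the universal property of the quotient $p_A$ gives $\gamma_A$ continuous. For the inverse direction, I would apply Alexandroff's Theorem (Facts \ref{projectivel}) to the perfect irreducible $p_A$, obtaining that $\rho_{p_A}:\RC(Z_A)\to\RC(Z_A/\!\smf)$ is a Boolean isomorphism, so that $\{p_A(\epb(a))\st a\in A\}$ enumerates exactly the regular closed sets of $Z_A/\!\smf$; by the diagram, $\gamma_A$ transports this family bijectively onto the defining base $\{\tau_A(a)\st a\in A\}$ of $\BClust(A)$. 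Since both spaces are regular and hence have their regular closed sets as a base for closed sets, and since $\gamma_A$ as a bijection preserves arbitrary intersections, $\gamma_A$ is a closed map, hence a homeomorphism. Part (2) is then immediate: $\tau_A=\rho_{\gamma_A\circ p_A}\circ\epb$ is a Boolean isomorphism, and its preservation and reflection of the contact relation $\smf$ and of the ideal $\BBBB$ of bounded elements are precisely items (iii) and (iv) of the proof sketch of Theorem \ref{roeperl}.

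For Part (3), I would first verify that for every $\clu'\in\BClust(A')$ the set $\hat\alpha(\clu')$ is a bounded cluster in $A$ — the nontrivial axiom (cl3) uses conditions (CLC3)--(CLC5) of Definition \ref{dhc} combined with the local contact axiom (BC3). The commutativity $\tau_{A'}\circ\alpha=\rcl(\hat\alpha)\circ\tau_A$ then amounts, after unfolding $\rcl(\hat\alpha)(G)=\cl(\hat\alpha^{-1}(\int(G)))$ from Theorem \ref{lccont}, to establishing the identity $\tau_{A'}(\alpha(a))=\cl(\hat\alpha^{-1}(\int(\tau_A(a))))$ for all $a\in A$. I expect the bulk of the technical effort to concentrate here: the argument must use condition (CLC3) together with Lemma \ref{pf1}(b) to transport the non-tangential inclusion $b\ll a$ on $A$ to $(\alpha(b^*))^*\ll\alpha(a)$ on $A'$, and must invoke (BC1) to supply bounded witnesses on the $A'$-side. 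Continuity of $\hat\alpha$ is then an automatic consequence of the commutation, since the identity exhibits the preimages under $\hat\alpha$ of the basic closed sets $\tau_A(a)$ as closed subsets of $\BClust(A')$.
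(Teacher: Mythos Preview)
Your Parts (1) and (2) are essentially the paper's approach, though you lean on Roeper's Theorem \ref{roeperl} more directly (for the local compactness of $\bclust(A)$ and for $\tau_A$ being a Boolean isomorphism preserving/reflecting $\smf$ and $\BBBB$), whereas the paper deliberately reproves these facts from the machinery of Proposition \ref{eqrel} and Alexandroff's Theorem. Both routes are valid; the paper's is more self-contained, yours is shorter.

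Part (3), however, has two genuine gaps. First, you underestimate the verification that $\hat\alpha(\clu')$ is a bounded cluster: the hard axiom is not (cl\,3) but (cl\,4) --- showing that $\clu\df\hat\alpha(\clu')$ is a cluster rather than merely a clan. The paper accomplishes this by picking $b_0\in\clu\cap\BBBB$, finding $\ult\in Z_A$ with $b_0\in\ult\sbe\clu$, and then proving $\clu=\clu_\ult$ via a nontrivial argument that uses (CLC3), (CLC4), (BC1), and Proposition \ref{bbcl1}(a). Your sketch does not indicate how (cl\,4) would be handled.

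Second, your deduction of continuity from the commutation identity does not work as stated. The identity $\tau_{A'}(\alpha(a))=\cl(\hat\alpha^{-1}(\int(\tau_A(a))))$ controls the closure of $\hat\alpha^{-1}(\int(\tau_A(a)))$, not the set $\hat\alpha^{-1}(\tau_A(a))$ itself; knowing the former is a regular closed set does not make the latter closed. The paper proceeds in the opposite order: it first proves the explicit formula
\[
\hat\alpha^{-1}(\int(\tau_A(a)))=\bigcup\{\int(\tau_{A'}(\alpha(b)))\st b\in\BBBB,\ b\ll a\},
\]
which exhibits the preimage of a basic open set as open and hence gives continuity; only then does it take closures on both sides and invoke (CLC5) to obtain $\cl(\hat\alpha^{-1}(\int(\tau_A(a))))=\tau_{A'}(\alpha(a))$, i.e., the commutation ${\sf RCL}(\hat\alpha)\circ\tau_A=\tau_{A'}\circ\alpha$.
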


\doc   We put $L_A\df \bclust(A)$.

\smallskip

(1) We first note that the family $\{\tau_A(a)\ |\ a\in A\}$ can indeed be taken as a closed base for the space
$L_A$
since $\tau_A(0)=\emptyset$, and for all $a,b\in A$ one has
$$\tau_A(a\vee b)=\{\mathfrak c\in L_A \st a\vee b\in\mathfrak c\}=\{\mathfrak c\in L_A\ |\ a\in\mathfrak c\mbox{ or }b\in\mathfrak c\}=\tau_A(a)\cup\tau_A(b).$$
Further, by Lemma \ref{fl2}, the map $\g_A$ is a well-defined injection. It is also a surjection because, when $\clu\in L_A$, then there exist $b\in\BBBB\cap\clu$ and $\ult\in\Ult(A)$ such that $b\in\ult\sbe\clu_\ult=\clu$ (see Facts \ref{deVriesl} and Lemma \ref{fl1}), so that $\clu=\g_A([\ult])$. Therefore,
 $\gamma_A$ is a well-defined bijective map.
 Also, setting
 $$Y_A \df Z_A/\!\smallfrown$$
  and applying Proposition \ref{eqrel}(d) and Alexandroff's Theorem (see Facts \ref{projectivel}) to $p_A:Z_A\to Y_A$, we obtain the Boolean isomorphism
$$\rho_{p_A}:{\sf CO}(Z_A)={\sf RC}(Z_A)\to {\sf RC}(Y_A),\quad H\mapsto p_A(H).$$
For showing that $\gamma_A$ is a homeomorphism, we now prove that $\gamma_A(p_A(\epb(a)))=\tau_A(a)$ for every $a\in A$. Indeed, for $a\in A$ one has
$\gamma_A(p_A(\epb(a)))=\g_A(\{[\ult]\st a\in\ult\in Z_A\})=\{{\mathfrak c}_{\mathfrak u}\ |\ a\in\mathfrak u\in Z_A\}\sbe\{\mathfrak c \in L_A\ |\ a\in\mathfrak c\}=\tau_A(a)$. For proving the converse inclusion, let $\clu\in \tau_A(a)$. Then, by Facts \ref{deVriesl}, there exists $\ult\in\Ult(A)$ such that $a\in\ult\sbe\clu_\ult=\clu$. If $\ult\nin Z_A$, then $\ult\sbe A\stm\BBBB$. Since $A\stm\BBBB$ is a cluster in the normal contact algebra $(A,\smf_{\rm\! Al})$ (see Lemma \ref{neogrn}) and $\clu_\ult$ is the unique cluster in the normal contact algebra $(A,\smf_{\rm\! Al})$ containing $\ult$ (by Facts \ref{deVriesl}), we conclude that $A\stm \BBBB=\clu_\ult=\clu\in Z_A$, a contradiction. Hence, $\ult\in Z_A$. Therefore, $\gamma_A(p_A(\epb(a)))=\tau_A(a)$ for every $a\in A$. Since
$\epb(A)=Z_A\cap\ep_A(A)=Z_A\cap\CO(\Ultsf(A))=Z_A\cap\RC(\Ultsf(A))=\RC(Z_A)$, we obtain that
$\RC(Y_A)=\{p_A(\epb(a))\st a\in A\}$. Now, the facts that $\RC(Y_A)$ is a closed base for $Y_A$ and  the family $\{\tau_A(a)\ |\ a\in A\}$ is  a closed base for the space $L_A$ imply that $\g_A$ is a homeomorphism.
Consequently,
$$\rho_{\gamma_A}:{\sf RC}(Y_A)\to{\sf RC}(L_A),\quad K\mapsto \gamma_A(K),$$
and then also $\rho_{\gamma_A\circ p_A}=\rho_{\gamma_A}\circ\rho_{p_A}$, are Boolean isomorphisms, and the triangle of (1) commutes.
Finally, since $Y_A$ is a locally compact Hausdorff space (as shown in Proposition \ref{eqrel}(d)) and $\gamma_A$ is a homeomorphism, $L_A$ is also a locally compact Hausdorff space.

\smallskip

(2) Clearly, by (1), $\tau_A:A\to\rc(L_A)$ is a Booolean isomorphism. With the assertions of Facts \ref{deVriesl}, Lemma \ref{fl2} and Proposition \ref{bbclf}, for all $a,b\in A$ one has

\medskip

\begin{tabular}{ll}
$ a\smallfrown b$ & $\iff \ex d\in\BBBB : (a\we d)\smf (b\we d)$\\
 &$\iff \exists\, \mathfrak u,\mathfrak v\in Z_A: a\in\mathfrak u,\, b\in\mathfrak v,\,\mathfrak u\smallfrown \mathfrak v$\\
 &$\iff \exists\, \mathfrak u,\mathfrak v\in Z_A: a\in\mathfrak u,\,b\in\mathfrak v,\,\mathfrak c_{\mathfrak u}=\mathfrak c_{\mathfrak v} $ \\
 &$\iff \exists\,\mathfrak c\in L_A: a,b\in\mathfrak c$\\
 & $\iff \tau_A(a)\cap\tau_A(b)\nes$\\
 & $\iff \tau_A(a)\smallfrown_{L_A}\tau_A(b).$\\
  \end{tabular}

  \medskip

 \noindent Consequently, $\tau_A$ preserves the relation $\ll$. We now show $\tau_A(\BBBB)=\CR(L_A)$. Indeed, by Proposition \ref{eqrel}(b) we know ($b\in\BBBB\iff \ep_A(b)\sbe Z_A\iff \ep_A(b)\in\KO(Z_A)$). In the proof of Proposition \ref{Ul equivalencel} we showed $(A,Z_A,p_A)\in|\CAPX|$. Thus, by Fact \ref{creql}, $p_A(\KO(Z_A))=\CR(Y_A)$. Since $\ep_A(\BBBB)=\KO(Z_A)$, we obtain $p_A(\ep_A(\BBBB))=\CR(Y_A)$. By (1), $\g_A$ is a homeomorphism, so that $\tau_A(\BBBB)=\g_A(p_A(\ep_A(\BBBB)))=\CR(L_A)$.
 In summary $\tau_A:A\lra\rcl(L_A)$ is a {\bf CLCA}-isomorphism.

 \medskip

(3) We must first confirm that $\clu\df\hat{\alpha}(\mathfrak c')$ is a bounded cluster of $A$, for every $\mathfrak c'\in L_{A\ap}$. 
Putting $\cbf\df \{b\in \BBBB\ |\ \forall\, a\in \!A\; (\,b\ll a \Longrightarrow \alpha(a)\in\mathfrak c'\,)\}$ we have $\clu=\uparrow\!\cbf$. Let us show that $\cbf\nes$. Indeed, there exists $d\in\clu\ap\cap\BBBB\ap$. Hence, there exists $b\in\BBBB$ such that $d\le\a(b)$. Then, for every $a\in A$ such that $b\ll a$, we have $d\le\a(b)\le\a(a)$ and thus $\a(a)\in\clu\ap$. Therefore, $b\in\cbf$ and thus $\clu\nes$,
so that (cl\,1) is satisfied.

For (cl\,2), consider $a_1,a_2\in\clu$. Then there exist $b_1,b_2\in\cbf$ such that $b_i\le a_i$, $i=1,2$.
Assume that we had $b_1\not\smallfrown b_2$, that is: $b_1\ll b_2^*$. Choosing $b_3,b_4\in \BBBB$ such that $b_1\ll b_3\ll b_4\ll b_2^*$, we obtain that $\a(b_3)\ll\a(b_4)$, i.e., $\a(b_3)\nsmf\ap(\a(b_4))^*$. By Lemma \ref{pf1}(a), we have that $\a(b_4^*)\le(\a(b_4))^*$, so that $\a(b_3)\nsmf\ap\a(b_4^*)$. The definition of $\cbf$ implies that $\a(b_3), \a(b_4^*)\in\clu\ap$ and thus $\a(b_3)\smf'\, \a(b_4^*)$, a contradiction. Hence $b_1\smf b_2$ and thus $a_1\smf a_2$.

To confirm (cl\,3), we consider $a_1,a_2\in A$ with $a_1\vee a_2\in\clu$. There exists $b\in\cbf$ such that $b\le a_1\vee a_2$. Put $b_i\df b\we a_i$, $i=1,2$. Then $b_i\le a_i$ and $b_i\in\BBBB$, $i=1,2$. Since $b=b_1\vee b_2$, we obtain $b_1\vee b_2\in\cbf$.
Assume we had
 $b_1\not\in\cbf$ and $b_2\not\in\cbf$. Then there are $c_1,c_2\in A$ such that $b_i\ll c_i$ and $\alpha(c_i)\not\in\mathfrak c'$, $i=1,2$.
 There exist $b_{11},b_{22}\in\BBBB$ such that $b_i\ll b_{ii}\ll c_i$, $i=1,2$. Then, by Lemma \ref{pf1}(b), $\a(b_{11}\vee b_{22})\ll \a(c_1)\vee\a(c_2)$. Since $b\in\cbf$ and $b\ll b_{11}\vee b_{22}$, we obtain $\a(b_{11}\vee b_{22})\in\clu\ap$ and, thus, $\a(c_1)\vee\a(c_2)\in\clu\ap$. Hence, $\a(c_1)\in\clu\ap$ or $\a(c_2)\in\clu\ap$, a contradiction. Therefore, $b_1\in\cbf$ or $b_2\in\cbf$. This implies that $a_1\in\clu$ or $a_2\in\clu$.

Since, clearly, $\clu$ is upward closed, we obtain that it is a clan. For proving that $\clu$ is a cluster,
 note first that $\cbf=\clu\cap\BBBB$. Indeed, the inclusion $\cbf\sbe\clu$ is clear; conversely, if $a\in\clu\cap\BBBB$, then there exists $b\in\cbf$ with $b\le a$, which implies, with the definition of $\cbf$, that $a\in\cbf$. We  now show $\clu=\clu_\ult$ for some $\ult\in Z_A$.

 We pick $b_0\in\cbf=\clu\cap\BBBB$. By Facts \ref{deVriesl},  there exists an $\ult\in Z_A$ such that $b_0\in\ult\sbe\clu$. Then, by Lemma \ref{fl1} and Facts \ref{deVriesl}, $\clu_\ult\df\{a\in A\st \fa b\in\ult:a\smf b\}$ is a cluster in the normal contact algebra $(A,\smf_{\rm\! Al})$. Since $\clu$ is a clan (and, thus, satisfies (cl\,2)), we obtain $\clu\sbe \clu_\ult$. For proving the converse inclusion, consider $b\in\clu_\ult\cap\BBBB$. Then $b\smf \clu$ (which means $b\smf a$ for all $a\in\clu$), because $\clu\sbe\clu_\ult$. Assume we had $b\nin\cbf$. Then there exists $a\in A$ such that $b\ll a$ and $\a(a)\nin\clu\ap$. There exists $b_1\in\BBBB$ with $b\ll b_1\ll a$. Then $(\a(b_1^*))^*\ll\a(a)$, i.e., $(\a(a))^*\ll \a(b_1^*)$. Since $\a(a)\nin\clu\ap$, we obtain $(\a(a))^*\in\clu\ap$ and, hence, $\a(b_1^*)\in\clu\ap$.
 We now show that $b_1^*\in\clu$. Indeed, by Proposition \ref{bbcl1}(a), there exists $b\ap\in\clu\ap\cap\BBBB\ap$ such that $b\ap\le\a(b_1^*)$. Then there exists $b_2\in\BBBB$ with $b\ap\le\a(b_2)$ and, hence, $b\ap\le\a(b_2)\we\a(b_1^*)=\a(b_2\we b_1^*)$. Then $b_2\we b_1^*\in\BBBB$ and $b_2\we b_1^*\in\cbf$ follows. Since $b_2\we b_1^*\le b_1^*$, we obtain $b_1^*\in\clu\sbe\clu_\ult$ and, hence, $b\smf b_1^*$. Since $b\ll b_1$, we have a contradiction. Therefore, $b\in\cbf$. So, we have shown $\clu_\ult\cap\BBBB\sbe\cbf=\clu\cap\BBBB\sbe\clu_\ult\cap\BBBB$, i.e., $\cbf=\clu_\ult\cap\BBBB$. Then, by  Proposition \ref{bbcl1}(a), for every $a\in\clu_\ult$ there exists $b\in\cbf$ with $b\le a$, which implies $a\in\clu$. Therefore, $\clu_\ult\sbe\clu\sbe\clu_\ult$, i.e., $\clu=\clu_\ult$, and we conclude $\clu$ is a bounded cluster in $A$.

To show the continuity of $\hat{\alpha}$, we first observe that since $\{\tau_A(a)\ |\ a\in A\}={\rm RC}(L_A)$ is a base for closed sets in $L_A$ that coincides with  the set
 $\{(\tau_A(a))^*\ |\ a\in A\}$, the family
 $${\rm RO}(L_A)\df\{{\rm int}(F)\ |\ F\in{\sf RC}(L_A)\}=\{{\rm int}(\tau_A(a))\ |\ a\in A\}$$
is a base of open sets in $L_A$. We must therefore show that $\hat{\alpha}^{-1}({\rm int}(\tau_A(a)))$ is open in $L_{A\ap}$, for every $a\in A$.
To this end we prove for $a\in A$ that
$$\hat{\alpha}^{-1}({\rm int}(\tau_A(a)))=\bigcup\{\int(\tau_{A\ap}(\a(b)))\st b\in\BBBB, b\ll a\}.$$
Indeed, denoting the Boolean operations of ${\sf RC}(L_A)$ as in Facts \ref{deVriesAlgebraicl}, with $F= \tau_A(a)$ we have
$$\int(\tau_A(a))=\mbox{{\rm int}}(F)=L_A\setminus\mbox{{\rm cl}}(L_A\setminus F)=L_A\setminus F^*=L_A\setminus \tau_A(a^*)=\{\mathfrak c\in L_A\ |\ a^*\not\in\mathfrak c\}.$$
Hence, for every $\mathfrak c'\in L_{A\ap}$,  using Proposition \ref{bbcl1}(b), one obtains
$$\mathfrak c'\in\hat{\alpha}^{-1}({\rm int}(F))\iff \hat{\a}(\clu\ap)\in\int(F)\iff a^*\not\in \hat{\alpha}(\mathfrak c')\iff(\exists\, b\in \hat{\a}(\clu\ap)\cap\BBBB) (b\ll a).$$
Then there exist $b_1,b_2\in\BBBB$ such that $b\ll b_1\ll b_2\ll a$. Now, the definition of $\hat{\a}(\clu\ap)\cap\BBBB$ implies $\a(b_1)\in\clu\ap$ and, thus, $\clu\ap\in\tau_{A\ap}(\a(b_1))$. Since $\a(b_1)\ll\a(b_2)$, we obtain  $\tau_{A\ap}(\a(b_1))\sbe \int(\tau_{A\ap}(\a(b_2)))$, and
$$\hat{\alpha}^{-1}({\rm int}(\tau_A(a)))\sbe\bigcup\{\int(\tau_{A\ap}(\a(b)))\st b\in\BBBB, b\ll a\}.$$
follows. For proving the converse inclusion, let $b\in\BBBB$, $b\ll a$ and $\clu\ap\in\int(\tau_{A\ap}(\a(b)))$. Then $(\a(b))^*\nin\clu\ap$ and thus $\a(b)\in\clu\ap$. This implies that for every $c\in A$ with $b\ll c$, we have $\a(c)\in\clu\ap$ because $\a(c)\ge\a(b)$. Hence, $b\in\hat{\a}(\clu\ap)\cap\BBBB\sbe \hat{\a}(\clu\ap)$ and, thus, $\hat{\a}(\clu\ap)\in\tau_A(b)$ and, since $b\ll a$, $\tau_A(b)\sbe \int(\tau_A(a))=\int(F)$. Therefore, $\clu\ap\in \hat{\alpha}^{-1}({\rm int}(\tau_A(a)))$.
We conclude that the set $\hat{\alpha}^{-1}({\rm int}(F))$
is indeed open in $L_{A\ap}$.
This completes the proof that $\hat{\alpha}$ is a continuous function.

Furthermore, continuing to take advantage of the Boolean isomorphism $\tau_A$ and the fact that  $\alpha$ is a $\CLCA$-morphism, we see that

\medskip

\begin{tabular}{lll}
$\mbox{{\rm cl}}(\hat{\alpha}^{-1}({\rm int}(\tau_A(a))))$&$=\mbox{{\rm cl}}(\bigcup\{\int(\tau_{A\ap}(\a(b)))\st b\in\BBBB, b\ll a\})$\\
&$=\bigvee\{\tau_{A\ap}(\a(b))\st b\in\BBBB, b\ll a \}$\\
&$=\tau_{A^{\,\prime}}(\bigvee\{\alpha(b)\st b\in\BBBB, b\ll a\})$\\
&$=\tau_{A^{\,\prime}}(\alpha(a))$\\
\end{tabular}

\medskip

\noindent holds in $L_{A\ap}$. But this proves precisely the claimed identity ${\sf RCL}(\hat{\alpha})\circ\tau_A=\tau_{A'}\circ\alpha$.
\sqs 	

As a final step in completing the proof of the fullness of the functor $\overline{V}$ we prove the following lemma.

\begin{lm}\label{mainlml}
Let $X$ and $Y$ be locally compact Hausdorff spaces, $p:X\to Y$ be a perfect irreducible map,  $f:X'\to X,\,g:Y'\to Y$ be continuous maps of topological spaces, and let $p\ap:X'\to Y'$ be a closed irreducible map with $p\circ f=g\circ p\ap$.  Then every $G\in\operatorname{RC}(X)$ satisfies the identity
$$\mbox{{\rm cl}}(g^{-1}(\mbox{{\rm int}}(p(G))))=\bigvee\{p^{\,\prime}({\rm cl}(f^{-1}({\rm int}(H))))\ |\ H\in\operatorname{CR}(X), p(H)\subseteq\mbox{{\rm int}}(p(G))\}.$$
\end{lm}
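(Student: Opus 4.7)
The plan is to prove the two inclusions separately, relying on Alexandroff's Theorem (Facts \ref{projectivel}), the local compactness of $Y$, and the compatibility encoded by $p\circ f=g\circ p'$. For the easy inclusion ``$\supseteq$'', I would fix $H\in\CR(X)$ with $p(H)\subseteq\int(p(G))$ and note that $\int(H)\subseteq H$ together with $p\circ f=g\circ p'$ forces $p'(f^{-1}(\int(H)))\subseteq g^{-1}(p(\int(H)))\subseteq g^{-1}(\int(p(G)))$. Since $p'$ is continuous and closed (hence $p'(\cl(A))=\cl(p'(A))$ for every $A\subseteq X'$), taking closures yields $p'(\cl(f^{-1}(\int(H))))\subseteq\cl(g^{-1}(\int(p(G))))$, and joining over all admissible $H$ gives one inclusion.

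For the harder direction ``$\subseteq$'', the first step is to decompose $V:=\int(p(G))$ using local compactness. For each $y\in V$ one chooses a compact regular closed neighborhood $K_y\subseteq V$ with $y\in\int(K_y)$. Alexandroff's Theorem applied to the perfect irreducible map $p:X\to Y$ gives $K_y=p(H_y)$ with $H_y=\cl(p^{-1}(\int(K_y)))\in\RC(X)$; moreover $H_y\subseteq p^{-1}(K_y)$, which is compact by perfectness of $p$, so in fact $H_y\in\CR(X)$ and $p(H_y)\subseteq V$. Consequently, $V=\bigcup_{y\in V}\int(p(H_y))$, hence $g^{-1}(V)=\bigcup_y g^{-1}(\int(p(H_y)))$ and therefore
$$\cl(g^{-1}(V))=\bigvee_y\cl(g^{-1}(\int(p(H_y))))\quad\text{in }\RC(Y').$$

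The remaining task is to show, for each such $H=H_y$, the pointwise bound
$$\cl(g^{-1}(\int(p(H))))\subseteq p'(\cl(f^{-1}(\int(H)))).$$
The key auxiliary identity here is that, for any open $W\subseteq Y'$, one has $\cl(W)=p'(\cl(p'^{-1}(W)))$: indeed $p'(\cl(p'^{-1}(W)))\subseteq p'(p'^{-1}(\cl(W)))=\cl(W)$ by continuity and surjectivity of $p'$, while $W\subseteq p'(p'^{-1}(W))\subseteq p'(\cl(p'^{-1}(W)))$, the latter set being closed since $p'$ is closed. Applying this with $W=g^{-1}(\int(p(H)))$ and then using $p'^{-1}\circ g^{-1}=f^{-1}\circ p^{-1}$ gives $\cl(g^{-1}(\int(p(H))))=p'(\cl(f^{-1}(p^{-1}(\int(p(H))))))$. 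Since $p^{-1}(\int(p(H)))$ is open and contained in $H$, it is contained in $\int(H)$; monotonicity of $f^{-1}$, $\cl$, and $p'$ then delivers the required inclusion. Combining this with the decomposition of the previous paragraph yields left-hand side $\subseteq$ right-hand side.

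The main obstacle is the decomposition step: one must verify that the chosen $H_y$ are not merely in $\RC(X)$ but are in fact compact regular closed, which is exactly where perfectness of $p$ is used (via $H_y\subseteq p^{-1}(K_y)$). The remaining ingredients—Alexandroff's isomorphism for $p$ and $p'$, the commutativity of the square, and the elementary lemma $\cl(W)=p'(\cl(p'^{-1}(W)))$ for open $W$—then combine routinely.
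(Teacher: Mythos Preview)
Your argument is correct. The ingredients are the same as in the paper---Alexandroff's isomorphism $\rho_p$, local compactness of $Y$ to produce compact regular closed sets, and the square $p\circ f=g\circ p'$---but the organization differs. The paper never separates the two inclusions; instead it establishes a chain of \emph{equalities of unions} before taking closures: starting from $\int(p(G))=\bigcup\{p(H)\mid H\in\CR(X),\,p(H)\subseteq\int(p(G))\}$, it uses surjectivity of $p'$ to write $g^{-1}(p(H))=p'(f^{-1}(p^{-1}(p(H))))$, and then a ``squeeze'' (for each admissible $H$ choose $H'\in\CR(X)$ with $p(H)\subseteq\int(p(H'))\subseteq p(H')\subseteq\int(p(G))$) replaces successively $p^{-1}(p(H))$ by $H$ and $f^{-1}(H)$ by $\cl(f^{-1}(\int(H)))$, yielding $g^{-1}(\int(p(G)))=\bigcup\{p'(\cl(f^{-1}(\int(H))))\mid\ldots\}$; only then are closures taken. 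Your route replaces the squeeze device by the clean identity $\cl(W)=p'(\cl(p'^{-1}(W)))$ for closed continuous surjections, together with the observation that for the specific $H_y=\rho_p^{-1}(K_y)$ one has $p^{-1}(\int(p(H_y)))\subseteq\int(H_y)$. Both arguments are short; yours isolates a reusable lemma about $p'$, while the paper's squeeze keeps everything at the level of unions indexed by the full admissible family and avoids singling out particular $H_y$'s.
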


\doc
Alexandroff's Theorem (see Facts \ref{projectivel}) gives us the Boolean isomorphism $$\rho_p:\operatorname{RC}(X)\longrightarrow \operatorname{RC}(Y),\ \ H\mapsto p(H),$$
  whose inverse maps $K\in\operatorname{RC}(Y)$ to $\mbox{{\rm cl}}(p^{-1}(\mbox{{\rm int}}(K)))$. The proof of Fact \ref{creql} shows $\rho_p(\CR(X))=\CR(Y)$. Now,
  since $Y$ is a locally compact $T_3$-space, for every $G\in\operatorname{RC}(X)$ we
  obtain  $${{\rm int}}(p(G))=\bigcup\{p(H)\ |\ H\in\operatorname{CR}(X)\mbox{  and }p(H)\subseteq\mbox{{\rm int}}(p(G))\}.$$
  In describing the inverse image under $g$ of this union, we first note that, since $p\ap$ is surjective, we have
   $g^{-1} (p(H))=p\ap((p\ap)\inv (g^{-1}(p(H))))=p\ap(f^{-1}(p^{-1}(p(H)) ))$
  for every $H\in\operatorname{CR}(X)$, which gives us
   $$g^{-1}(\mbox{{\rm int}}(p(G)))=\bigcup\{p^{\,\prime}(f^{-1}(p^{-1}(p(H))))\ |\ H\in\operatorname{CR}(X)\mbox{ and }p(H)\subseteq\mbox{{\rm int}}(p(G))\}.$$
   Since, for $H\in\CR(X)$, $p(H)$ is compact and $Y$ is a locally compact $T_3$-space, there exists $H\ap\in\CR(X)$ such that
   $$p(H)\sbe\int(p(H\ap))\sbe p(H\ap)\sbe\int(p(G)).$$
   Obviously, $H\ap=\rho_p\inv(\rho_p (H\ap))=\cl(p\inv(\int(p(H\ap)))).$
  Consequently,
  $$H\subseteq p^{-1}(p(H))\subseteq p^{-1}(\mbox{{\rm int}}(p(H')))\subseteq\mbox{{\rm cl}}(p^{-1}(\mbox{{\rm int}}(p(H'))))= H'.$$
  As a result,
   $$g^{-1}(\mbox{{\rm int}}(p(G)))=\bigcup\{p^{\,\prime}(f^{-1}(H))\ |\ H\in\operatorname{CR}(X)\mbox{ and }p(H)\subseteq\mbox{{\rm int}}(p(G))\}.$$
   Also, since  $H\sbe p\inv(p(H))\sbe p\inv(\int(p(H\ap)))\sbe \int(\cl(p\inv(\int(p(H\ap)))))=\int(H\ap)$, for every $H\in\CR(X)$, we obtain that
   $f^{-1}(H)\subseteq f^{-1}(\mbox{{\rm int}}(H')) \subseteq \mbox{{\rm cl}}(f^{-1}(\mbox{{\rm int}}(H'))) \subseteq f^{-1}(H')$. Thus
    $$g^{-1}(\mbox{{\rm int}}(p(G)))=\bigcup\{p^{\,\prime}({\rm cl}(f^{-1}({\rm int}(H))))\ |\ H\in\operatorname{CR}(X)\mbox{ and }p(H)\subseteq\mbox{{\rm int}}(p(G))\}.$$
  Finally, taking closures of both sides of this formula and using  Facts \ref{deVriesAlgebraicl}, we conclude the claimed formula.
 \sqs

We can now sum up, return to the functor
$$\overline{V}:{\bf DBoo}/\!\!\backsim\;\longrightarrow{\bf CLCA}$$
of Construction \ref{quotientfunctors2} and complete our alternative proof of the extension of de Vries' dual equivalence to the category of locally compact Hausdorff spaces and continuous maps obtained in \cite{D-AMH1-10}.

\begin{theorem}\label{newlcdth}
The functor
$\overline{V}$
is an isomorphism of categories.
Consequently, the category $\bf LKHaus$ is dually equivalent to the category $\bf CLCA$.
\end{theorem}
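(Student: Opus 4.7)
The functor $\overline{V}$ acts as the identity on objects and is faithful directly from the definition of $\backsim$; hence the only substantive task is to prove fullness. Concretely, given a $\CLCA$-morphism $\alpha:(A,\smf,\BBBB)\to(A',\smf',\BBBB')$, I will produce a $\DBoo$-morphism $\varphi:A\to A'$ such that $\varphi\cuk=\alpha$.

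My plan is to construct $\varphi$ by transporting $\alpha$ through the duality machinery already assembled. Starting from $\alpha$, Proposition \ref{full propl}(3) yields a continuous map $\hat{\alpha}:\bclust(A')\to\bclust(A)$, and conjugation by the homeomorphisms $\gamma_A,\gamma_{A'}$ of Proposition \ref{full propl}(1) produces a continuous map
$$f\df\gamma_A^{-1}\circ\hat{\alpha}\circ\gamma_{A'}\ :\ Y_{A'}\longrightarrow Y_A,\qquad Y_A\df Z_A/\smf.$$
Since $Z_{A'}$ is extremally disconnected it is $Pf$-projective in $\Top$ by Proposition \ref{rumped}, and since $p_A:Z_A\to Y_A$ is a perfect surjection (Proposition \ref{eqrel}(d)), I can lift $f\circ p_{A'}:Z_{A'}\to Y_A$ along $p_A$ to obtain a continuous map $g:Z_{A'}\to Z_A$ with $p_A\circ g=f\circ p_{A'}$. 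The Stone-type dual equivalence of Theorem \ref{zdualityed1} then supplies a unique Boolean homomorphism $\varphi:A\to A'$ with $g=\Ultsf(\varphi)|_{Z_{A'}}$. By construction $(\varphi,g,f)$ is a morphism in $\CAPX$, so Proposition \ref{U} immediately certifies $\varphi=U(\varphi,g,f)$ as a $\DBoo$-morphism.

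The crux will be verifying $\varphi\cuk=\alpha$. Since $\tau_{A'}$ is a $\CLCA$-isomorphism by Proposition \ref{full propl}(2), it suffices to check $\tau_{A'}(\varphi\cuk(a))=\tau_{A'}(\alpha(a))$ for every $a\in A$. By Proposition \ref{full propl}(3) the right-hand side equals $\rcl(\hat{\alpha})(\tau_A(a))$, and via the commutative triangle of Proposition \ref{full propl}(1) together with $\gamma_A,\gamma_{A'}$ being homeomorphisms, this reduces to $\gamma_{A'}(\cl(f^{-1}(\int(p_A(G)))))$, where $G\df\bar{\ep}_A^{Z_A}(a)\in\CO(Z_A)$. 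Applying Lemma \ref{mainlml} to the commuting square $p_A\circ g=f\circ p_{A'}$ rewrites the inner expression as a join indexed by $H\in\CR(Z_A)$ with $p_A(H)\subseteq\int(p_A(G))$. Through the Boolean isomorphism $\rho_{p_A}$ together with the $\CLCA$-isomorphism of Proposition \ref{smfpll} (and the identity $\smf_{p_A}=\smf$, $\BBBB_{p_A}=\BBBB$ witnessed in the proof of Proposition \ref{Ul equivalencel}), this indexing condition translates exactly to $b\in\BBBB$ with $b\ll a$, where $H=\bar{\ep}_A^{Z_A}(b)$. For such $H$, clopenness gives $\int(H)=H$ and $g^{-1}(H)=\bar{\ep}_{A'}^{Z_{A'}}(\varphi(b))$ is also clopen, so $\cl(g^{-1}(\int(H)))=\bar{\ep}_{A'}^{Z_{A'}}(\varphi(b))$. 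Then the triangle of Proposition \ref{full propl}(1) for $A'$ identifies $\gamma_{A'}(p_{A'}(\bar{\ep}_{A'}^{Z_{A'}}(\varphi(b))))$ with $\tau_{A'}(\varphi(b))$, and since $\tau_{A'}$ preserves arbitrary joins the whole expression will collapse to $\bigvee\{\tau_{A'}(\varphi(b)):b\in\BBBB,\ b\ll a\}=\tau_{A'}(\varphi\cuk(a))$, closing the argument.

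Once $\overline{V}$ is known to be full, it is bijective on hom-sets and, together with identity-on-objects, an isomorphism of categories; combining this with Theorem \ref{md} will then recover the dual equivalence $\LKHaus\simeq\CLCA^{\rm op}$ of \cite{D-AMH1-10}. I expect the verification in the third paragraph to be the main technical obstacle: the earlier steps are essentially assembly of machinery already in place, but this verification turns on correctly aligning the formula of Lemma \ref{mainlml} with the cascade of isomorphisms $\bar{\ep}_A^{Z_A}$, $\rho_{p_A}$, $\gamma_A$, and $\tau_A$, and checking compatibility with arbitrary joins at each translation step.
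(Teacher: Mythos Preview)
Your proposal is correct and follows essentially the same route as the paper: construct $\hat{\alpha}$ via Proposition \ref{full propl}(3), conjugate by the $\gamma$'s, lift along $p_A$ using Proposition \ref{rumped}, dualize via Theorem \ref{zdualityed1} to get $\varphi$, and then identify $\varphi\cuk$ with $\alpha$ through Lemma \ref{mainlml} and the triangle of Proposition \ref{full propl}(1). Your only (legitimate) shortcut is invoking Proposition \ref{U} to certify that $\varphi$ is a $\DBoo$-morphism, whereas the paper reverifies the contact-reflection and (CLC4) by hand; and in the final identification you compute $\tau_{A'}(\varphi\cuk(a))$ directly, while the paper routes the same computation through Lemma \ref{assocuk} and the formula $\alpha=(\tau_{A'}^{-1}\circ\zeta\circ\tau_A)\cuk$.
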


\doc
In order to show that $\overline{V}$ is full, given $\alpha:(A,\smf,\BBBB)\to (A',\smf',\BBBB\ap)$ in $\bf CLCA$, we must show  $\alpha=V\varphi$, for some $\varphi:A\to A\ap$ in
$\bf DBoo$. Proposition \ref{full propl} produces the continuous map $\hat{\alpha}:{\sf BClust}(A')\to{\sf BClust}(A)$, as well as the homeomorphism
$\gamma_{A}:Z_A/\!\smallfrown\;\longrightarrow {\sf BClust}(A),\;[\mathfrak u]\mapsto\mathfrak c_{\mathfrak u}$, where $Z_A=\BUlt(A)$, and the maps $\tau_{A}:A\lra \rcl(\bclust(A))$ and $p_{A}: Z_A\to Z_A/\!\smallfrown$.
As in the proof of Proposition \ref{full propl}, we set $L_A=\bclust(A)$.

We put $\bar{f}=\gamma_A^{-1}\circ\hat{\alpha}\circ\gamma_{A'}$. Since $p_A$ is a perfect surjection, Rump's Proposition \ref{rumped} shows that there exists a continuous map $f:Z_{A\ap}\to Z_A$  making the diagram
\begin{center}
$\xymatrix{Z_A \ar[d]_{p_A} & Z_{A\ap} \ar[l]_{f}\ar[d]^{p_{A'}}\\
Z_A/\!\smallfrown\ar[d]_{\gamma_A} & Z_{A\ap}/\!\smallfrown\ap\ar[l]_{\bar{f}}\ar[d]^{\gamma_{A'}}\\
L_A & L_{A\ap}\ar[l]^{\hat{\alpha}}\\
}$	
\end{center}
commute. By Theorem \ref{zdualityed1}, there exists a unique Boolean homomorphism $\varphi:A\to A'$ such that $(\p,f):(A,Z_A)\lra (A\ap,Z_{A\ap})$ is a $\LZCB$-morphism, i.e., $\Ultsf(\p)(Z_{A\ap})\sbe Z_A$, and the restriction of $\Ultsf(\p)$ on $Z_{A\ap}$ coincides with $f$.
Obviously,
the commutativity of the above diagram implies that $f$ preserves the contact relation for ultrafilters (i.e., if $\ult\ap,\ultv\ap\in Z_{A\ap}$ and $\ult\ap\smf'\,\ultv\ap$ then $f(\ult\ap)\smf f(\ultv\ap)$). We will now show that
 $\varphi$  reflects the contact relation. Indeed, let $a,b\in A$ and $\p(a)\smf'\, \p(b)$. Then, by (BC2), there exist $a\ap,b\ap\in\BBBB\ap$ such that $a\ap\le\p(a)$, $b\ap\le\p(b)$ and $a\ap\smf'\, b\ap$. Now, by Facts \ref{deVriesl}, there exist $\ult\ap,\ultv\ap\in\Ult(A\ap)$ with $a\ap\in\ult\ap$, $b\ap\in\ultv\ap$ and $\ult\ap\smf'\,\ultv\ap$. Since $a\ap,b\ap\in\BBBB\ap$, we obtain that $\ult\ap,\ultv\ap\in Z_{A\ap}$. Also, $\p(a)\in\ult\ap$ and $\p(b)\in\ultv\ap$. Since $f(\ult\ap)\smf f(\ultv\ap)$, i.e., $\p\inv(\ult\ap)\smf\p\inv(\ultv\ap)$, we conclude $a\smf b$. So, $\varphi$  reflects the contact relation.

 We now show that for every $b\ap\in\BBBB\ap$ there exists $b\in\BBBB$ such that $b\ap\le\p(b)$. Indeed, for $b\ap\in\BBBB\ap$ one has $\ep_{A\ap}(b\ap)\sbe Z_{A\ap}$, and $\ep_{A\ap}(b\ap)$ is compact. Thus, $K=f(\ep_{A\ap}(b\ap))=\p\inv(\ep_{A\ap}(b\ap))$ is a compact subspace of $Z_A$. Then, since $Z_A$ is an open subset of $\Ultsf(A)$, there exists $b\in\BBBB$ such that $K\sbe \ep_A(b)$. So, $K=\Ultsf(\p)(\ep_{A\ap}(b\ap))\sbe \ep_A(b)$. Then, using Stone duality, we obtain that $\ep_{A\ap}(b\ap)\sbe(\Ultsf(\p))\inv(\ep_A(b))=(\COsf(\Ultsf(\p)))(\ep_A(b))
 =\ep_{A\ap}(\p(b))$. Therefore, $b\ap\le\p(b)$.
 This concludes the proof that $\p$ is
 a  $\bf DBoo$-morphism.

Applying Lemma \ref{mainlml} to the outer rectangle of the above diagram, thus
putting $p\df\gamma_A\circ p_A,\, p^{\,\prime}\df \gamma_{A^{\,\prime}}\circ p_{A^{\,\prime}}$ and $g\df\hat{\alpha}$,
 we obtain that for every $G\in\RC(Z_A)$,
 $$\mbox{{\rm cl}}(g^{-1}(\mbox{{\rm int}}(p(G))))=\bigvee\{p^{\,\prime}({\rm cl}(f^{-1}({\rm int}(H))))\ |\ H\in\operatorname{CR}(Z_A), p(H)\subseteq\mbox{{\rm int}}(p(G))\}.$$
 The above formula,  together with the identities $\RC(Z_A)=\CO(Z_A)$ and  $\CR(Z_A)=\KO(Z_A)$, implies that for every $G\in\CO(Z_A)$,
 $$\mbox{{\rm cl}}(g^{-1}(\mbox{{\rm int}}(p(G))))=\bigvee\{p^{\,\prime}(f\inv(H))\ |\ H\in\operatorname{KO}(Z_A), p(H)\ll_{L_A} p(G)\}.$$
 Since $\rho_p:\CO(Z_A)\to\RC(L_A),\ \  F\mapsto p(F),$ is a Boolean isomorphism,
 and since $\rho_p(\CR(Z_A))=\CR(L_A)$ (see the proof of Fact \ref{creql}), i.e.,
 $\rho_p(\KO(Z_A))=\CR(L_A)$,
 the above equality can be rewritten in the following form: for every $F\in\RC(L_A)$,
 $$\cl(g\inv(\int(F)))=\bigvee\{\rho_{p\ap}(f\inv(\rho_p\inv(Q)))\st Q\in\CR(L_A), Q\ll_{L_A} F\}.$$
 Hence,
 $$\mbox{{\sf RCL}}(\hat{\alpha})=(\rho_{p^{\,\prime}}\circ \mbox{{\sf CO}}(\mbox{{\sf Ult}}(\varphi))\circ\rho_p^{-1})\cuk.$$
 Now, Proposition \ref{full propl}(3) implies
 $\tau_{A^{\,\prime}}\circ\alpha\circ\tau_A^{-1}=(\rho_{p^{\,\prime}}\circ \mbox{{\sf CO}}(\mbox{{\sf Ult}}(\varphi))\circ\rho_p^{-1})\cuk$ and, thus,
 $$\alpha=\tau_{A^{\,\prime}}^{-1}\circ\zeta\cuk\circ\tau_A,\ \ \mbox{where}\ \  \zeta:=\rho_{p^{\,\prime}}\circ \mbox{{\sf CO}}(\mbox{{\sf Ult}}(\varphi))\circ\rho_p^{-1}.$$
 With  Lemma \ref{assocuk} and Proposition \ref{full propl}(2) we then obtain

 \medskip

 \begin{tabular}{ll}
$\alpha$ & $=\alpha\cuk =((\tau_{A^{\,\prime}}^{-1}\circ\zeta\cuk)\circ\tau_A)\cuk=
((\tau_{A^{\,\prime}}^{-1}\circ\zeta\cuk)\cuk\circ\tau_A\cuk)\cuk$\\ & $=
(((\tau_{A^{\,\prime}}^{-1})\cuk\circ\zeta\cuk)\cuk\circ(\tau_A)\cuk)\cuk$\\ & $=
 ((\tau_{A^{\,\prime}}^{-1}\circ\zeta)\cuk\circ(\tau_A)\cuk)\cuk =
 (\tau_{A^{\,\prime}}^{-1}\circ\zeta\circ\tau_A)\cuk$\\ & $=(\tau_{A^{\,\prime}}^{-1}\circ\rho_{p^{\,\prime}}\circ \mbox{{\sf CO}}(\mbox{{\sf Ult}}(\varphi))\circ\rho_p^{-1}\circ\tau_A)\cuk.$\\
 \end{tabular}

 \medskip
\noindent From Proposition \ref{full propl}(1) we obtain ${\bar{\ep}}_A^{Z_A}=\rho_p\inv\circ\tau_A$ and $({\bar{\ep}}_{A\ap}^{Z_{A\ap}})\inv=\tau_{A\ap}\inv\circ\rho_{p\ap}$.
Hence, using the equality ${\bar{\ep}}_{A\ap}^{Z_{A\ap}}\circ\p=(\COsf(\Ultsf(\p))|Z_{A\ap})\circ{\bar{\ep}}_A^{Z_A}$ (see \cite[arXiv version, p. 12, line 2]{DD} or the proof of Theorem \ref{zdualityedll}), we conclude that
  $$\alpha=(({\bar{\varepsilon}}_{A^{\,\prime}}^{Z_{A\ap}})^{-1}\circ \mbox{{\sf CO}}(\mbox{{\sf Ult}}(\varphi))\circ {\bar{\varepsilon}}_A^{Z_A})\cuk=\varphi\cuk=V(\varphi),$$
as desired.
\sqs

We also confirm that our constructions leading up to Theorem \ref{newlcdth} gives, up to natural isomorphisms, the functors $\sf BClust$ and $\sf RCL$ furnishing the dual equivalence with unit $\sigma$ and counit $\tau$ as described in the proof of Theorem \ref{lccont}; that is:

\begin{cor}\label{cornlcdth}
The diagram
\begin{center}
$\xymatrix{{\bf CLCA}^{\rm op}\ar@/^0.2pc/[r]^{\overline{V}^{-1}\;\,}\ar@/^2.4pc/[rrr]^{\sf BClust} & ({\bf DBoo}/\!\backsim)^{\rm op}\ar@/^0.2pc/[r]^{\overline{W}\quad}\ar@/^0.2pc/[l]^{\overline{V}\;\;} & ({\sf C}({\cal A},{\cal P},{\cal X})/\!\sim)^{\rm op}\ar@/^0.2pc/[r]^{\quad\tilde{T}}\ar@/^0.2pc/[l]^{\overline{U}\quad} & {\bf LKHaus}\ar@/^0.2pc/[l]^{\quad\tilde{S}}\ar@/^2.4pc/[lll]^{\sf RCL}\\
}$	
\end{center}
commutes in an obvious sense, up to natural isomorphism.	
\end{cor}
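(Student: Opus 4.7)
\noindent\emph{Proof plan.} The strategy is to establish two natural isomorphisms that together capture the ``obvious sense'' in which the diagram commutes: (i) $\tilde T\circ\overline{W}\circ\overline{V}^{-1}\cong\bclust$ as functors $\CLCA^{\rm op}\to\HLC$, and (ii) $\overline{V}\circ\overline{U}\circ\tilde S\cong\rcl$ as functors $\HLC\to\CLCA^{\rm op}$. Any other composite one could form around the diagram is then forced to agree, up to natural isomorphism, with the corresponding composite involving $\bclust$ and $\rcl$.

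I would first handle (i). On objects the composite sends $(A,\smf,\BBBB)\in|\CLCA|$ to $Z_A/\!\smf$, with $Z_A=\BUlt(A)$ and $\smf$ the equivalence relation of Proposition \ref{eqrel}(a), while $\bclust$ sends $A$ to $\bclust(A)$. The component at $A$ of the prospective natural isomorphism is the homeomorphism $\gamma_A:Z_A/\!\smf\to\bclust(A),\ [\ult]\mapsto\clu_\ult,$ of Proposition \ref{full propl}(1). For naturality in an $\CLCA$-morphism $\alpha:A\to A\ap$, I would fix a $\DBoo$-morphism $\varphi$ that $\backsim$-represents $\overline{V}^{-1}(\alpha)$, observe that $\overline{W}(\langle\varphi\rangle)$ is represented by $(\varphi,g_\varphi,f_\varphi)$ as in Proposition \ref{Ul equivalencel}, and verify commutativity of
\begin{center}
$\xymatrix{Z_A/\!\smf \ar[d]_{\gamma_A} & Z_{A\ap}/\!\smf\ap \ar[l]_{f_\varphi}\ar[d]^{\gamma_{A\ap}}\\ \bclust(A) & \bclust(A\ap)\ar[l]^{\hat\alpha}}$
\end{center}
by invoking Proposition \ref{full propl}(3). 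In fact this square was assembled already in the proof of Theorem \ref{newlcdth} precisely in order to pull back $\bar f=\gamma_A^{-1}\circ\hat\alpha\circ\gamma_{A\ap}$ along the projective covers via Rump's Proposition \ref{rumped}, so the verification will largely be a citation.

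Next I would handle (ii). The composite sends $Y\in|\HLC|$ to $(\RC(Y),\smf_{\pi\ap_Y},\BBBB_{\pi\ap_Y})$, where $\pi\ap_Y:E\ap Y\to Y$ is the projective cover used in $\tilde S$ (see the discussion after Proposition \ref{Fed applicationl}), whereas $\rcl(Y)=(\RC(Y),\smf_Y,\CR(Y))$. Proposition \ref{smfpll} directly supplies the $\CLCA$-isomorphism $\rho_{\pi\ap_Y}\circ\bar\varepsilon_{\RC(Y)}^{E\ap Y}$ between these, which will serve as the component at $Y$. For naturality in $f:Y\ap\to Y$ of $\HLC$, I would unfold $\tilde S(f)=[(\varphi\ap,g\ap)_f,f]$, then $\overline U$, then $\overline V$, whereupon the induced action on $\RC(Y)$ emerges as $(-)\cuk$ applied to the transported Boolean map $\rho_{\pi\ap_{Y\ap}}\circ\CO(\Ultsf((\varphi\ap)_f))\circ\rho_{\pi\ap_Y}^{-1}$. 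Its coincidence with $\rcl(f):G\mapsto\cl(f^{-1}(\int(G)))$ is precisely what Lemma \ref{mainlml} yields, applied to the lifting square furnished by $\tilde S(f)$---exactly the computation used at the end of the proof of Theorem \ref{newlcdth}.

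The main obstacle will not be in producing the object components of the two natural isomorphisms---these come essentially for free from Propositions \ref{full propl}(1) and \ref{smfpll}---but in keeping careful track of how a morphism is passed through the chains $\overline V^{-1}\to\overline W\to\tilde T$ and $\tilde S\to\overline U\to\overline V$, since at each step one has to select representatives, lifts along projective covers, and $\sim$- or $\backsim$-equivalence classes. Once the correct representatives are chosen, naturality on side (i) collapses to Proposition \ref{full propl}(3), and on side (ii) to Lemma \ref{mainlml}; the bookkeeping is where the real work lies.
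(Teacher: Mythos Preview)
Your proposal is correct and, for part (i), essentially coincides with the paper's argument: the component at $A$ is the homeomorphism $\gamma_A$ of Proposition~\ref{full propl}(1), and naturality is read off from the diagram assembled in the proof of Theorem~\ref{newlcdth} (using that $f_\varphi$ depends only on $\langle\varphi\rangle$, by Lemma~\ref{U2l}).

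Where you diverge is in part (ii). You propose to verify $\overline V\circ\overline U\circ\tilde S\cong\rcl$ \emph{directly}, building the object components from Proposition~\ref{smfpll} and checking naturality by a second application of Lemma~\ref{mainlml} to the lifting square supplied by $\tilde S(f)$. This works, but the paper avoids all of it: since $\tilde T\circ\overline W\circ\overline V^{-1}$ and $\bclust$ are naturally isomorphic by (i), and since $\overline V\circ\overline U\circ\tilde S$ and $\rcl$ are respectively their left adjoints, uniqueness of adjoints up to natural isomorphism gives (ii) for free in one line. Your route buys an explicit description of the natural isomorphism on the $\rcl$ side (and exercises Lemma~\ref{mainlml} a second time), at the cost of the bookkeeping you rightly flag as the main obstacle; the paper's route trades that explicitness for a one-sentence categorical shortcut.
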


\doc
It suffices to confirm that the functor $\tilde{T}\circ\overline{W}\circ\overline{V}^{-1}$ is naturally isomorphic to $\sf BClust$ since then its left adjoint, $\overline{V}\circ\overline{U}\circ\tilde{S}$, must be naturally isomorphic to $\sf RCL$. Indeed, for $(A,\smallfrown,\BBBB)\in |{\bf CLCA}|$, by Proposition \ref{full propl} one has the homeomorphism
$$\xymatrix{\tilde{T}(\overline{W}(\overline{V}^{-1}(A,\smallfrown,\BBBB)))
=\tilde{T}(A,\BUlt(A),p_A)={\rm BUlt}(A)/\!\smallfrown\ar[rr]^{\qquad\qquad\qquad\qquad\qquad\gamma_A} & & {\sf BClust}(A).\\
}$$
Checking that $\gamma_A$ is natural in $A$ involves going back to the morphism definitions of the functors involved, but that is a routine matter.	
\sqs

\end{document}